\newtheorem{introthm}{Theorem}
\newtheorem{introcor}{Corollary}
\newtheorem{theorem}{Theorem}[section]
\newtheorem{lemma}[theorem]{Lemma}
\newtheorem{proposition}[theorem]{Proposition}
\newtheorem{corollary}[theorem]{Corollary}
\theoremstyle{definition}
\newtheorem{definition}[theorem]{Definition}
\newtheorem{example}[theorem]{Example}
\def\cc{{\mathbb C}}
\def\rr{{\mathbb R}}
\def\pp{{\mathbb P}}
\begin{document}

\title{Automorphism groups of pseudoreal Riemann surfaces}

\author{Michela Artebani}
\address{
Departamento de Matem\'atica, \newline
Universidad de Concepci\'on, \newline
Casilla 160-C,
Concepci\'on, Chile}
\email{martebani@udec.cl}

\author{Sa\'ul Quispe}
\address{
Departamento de Matem\'atica y Estad\'istica,  \newline
Universidad de La Frontera,  \newline
Casilla 54-D,   
Temuco, Chile}
\email{saul.quispe@ufrontera.cl}

\author{Cristian Reyes}
\address{
Departamento de Matem\'atica, \newline
Universidad de Concepci\'on, \newline
Casilla 160-C,
Concepci\'on, Chile}
\email{creyesm@udec.cl}

\subjclass[2000]{14H45, 14H37, 30F10, 14H10}
\keywords{Pseudoreal Riemann surface, field of moduli, NEC group} 
\thanks{The authors have been supported 
by Proyectos FONDECYT Regular N. 1130572, 1160897, 3140050 and
Proyecto Anillo CONICYT PIA  ACT1415.
The third author has been supported by CONICYT PCHA/Mag\'isterNacional/2014/22140855.}

\begin{abstract}
A smooth complex projective curve is called pseudoreal if it is isomorphic to its conjugate but is not definable over the reals.
Such curves, together with real Riemann surfaces, form the real locus of the moduli space $\mathcal M_g$.
This paper deals with the classification of pseudoreal curves according to the structure of their automorphism group.
We follow two different approaches existing in the literature: one coming from number theory, 
dealing more generally with fields of moduli of projective curves, and the other from complex geometry, 
through the theory of NEC groups. 
Using the first approach, we prove that the automorphism group of a pseudoreal Riemann surface $X$ is abelian if  
$X/Z({\rm Aut}(X))$ has genus zero, where $Z({\rm Aut}(X))$ is the center of ${\rm Aut}(X)$. 
This includes the case of $p$-gonal Riemann surfaces, already known by results of Huggins 
and  Kontogeorgis. 
By means of the second approach and of elementary properties of group extensions, 
we show that $X$ is not pseudoreal if the center of $G={\rm Aut}(X)$ is trivial and either ${\rm Out}(G)$ contains no involutions 
or ${\rm Inn}(G)$ has a group complement in ${\rm Aut}(G)$. This extends and gives an elementary proof (over $\cc)$ 
of a result by D\`ebes and Emsalem. 
Finally, we provide an algorithm, implemented in MAGMA, which classifies the automorphism groups 
of pseudoreal Riemann surfaces of genus $g\geq 2$, once a list of all groups acting for such genus, 
with their signature and generating vectors, are given.
This program, together with the database provided by J. Paulhus in \cite{Pau15}, 
allowed us to classifiy pseudoreal Riemann surfaces up to genus $10$, extending previous results 
by Bujalance, Conder and Costa.
\end{abstract}
\maketitle

\tableofcontents

\section*{Introduction}

Let $X\subset \mathbb{P}_{\mathbb{C}}^n$ be a smooth complex projective curve 
defined as the zero locus of homogeneous polynomials $p_1,\dots,p_r\in \mathbb{C}[x_0,\dots,x_n]$ 
and let $\overline{X} $ be its conjugate, i.e. 
the zero locus of the polynomials obtained conjugating the coefficients of the polynomials $p_i$. 
The curve $X$ is called {\em pseudoreal} if it is isomorphic to $\overline{X}$ but is not isomorphic to a curve defined 
by polynomials with real coefficients. 
Since any compact Riemann surface can be embedded in projective space as a smooth complex curve, and 
the definition only depends on the isomorphism class of the curve, this allows to define 
the concept of pseudoreal compact Riemann surface.
Equivalently, pseudoreal Riemann surfaces can be defined as Riemann surfaces 
carrying anticonformal automorphisms but no anticonformal involutions.
The disjoint union of the locus of pseudoreal Riemann surfaces with the locus of real Riemann surfaces
is the fixed locus of the natural involution $X\to \overline X$ on the moduli space of curves $\mathcal M_g$.

 In literature, one can find two main approaches to the study of pseudoreal Riemann surfaces: 
 a number-theoretical approach and an approach through NEC groups. 
 The first approach deals, more generally, with the problem of deciding whether the field of moduli of a curve  is a field of definition. In this setting, pseudoreal curves are complex curves having $\mathbb{R}$ as field of moduli, but not  as a field of definition. 
A fundamental tool in this approach is a classical theorem by A. Weil (Theorem \ref{T1}), which provides a necessary and sufficient condition for a projective variety defined over a field $L$, to be definable over a subfield $K\subseteq L$ when the extension is Galois. More recently, P. D\`ebes and M. Emsalem proved that $X/{\rm Aut}(X)$ can be always defined over the field of moduli of $X$ and that $X$ has the same property when a suitable model $B$ of $X/{\rm Aut}(X)$ over the subfield $K\subseteq L$ has a $K$-rational point (see \cite[Corollary 4.3 (c)]{DebEms99}). 
Such result  has been applied by B. Huggins to complete the classification of pseudoreal hyperelliptic curves  \cite[Proposition 5.0.5]{Hugg05} and later by A. Kontogeorgis in \cite{Kon09} in the case of $p$-gonal curves. Unfortunately, the result of D\`ebes-Emsalem is not easy to apply as soon as the genus of $X\slash{\rm Aut}(X)$ is bigger than zero.

A second approach, specific of compact Riemann surfaces, is through the theory of Fuchsian groups, and more generally of {\em non-euclidean crystallographic (NEC) groups}, which are discrete subgroups $\Delta$ of the full automorphism group of the hyperbolic plane $\mathbb{H}$ such that $\mathbb{H}/\Delta$ is a compact Klein surface. 
It follows from the uniformization theorem that giving a compact Riemann surface $X$ of genus $g\geq 2$  containing a group isomorphic to $G$ in its full automorphism group is equivalent to give an epimorphism $\varphi:\Gamma\to G$, where $\Gamma$ is a NEC group, such that $\ker(\varphi)$ is a torsion free Fuchsian group. 
The automorphism group ${\rm Aut}(X)$ of $X=\mathbb{H}/\ker(\varphi)$ will then contain $\varphi(\Gamma^+)=G^+$,
where $\Gamma^+$ is the canonical Fuchsian subgroup of $\Gamma$.
The Riemann surface $X$ is pseudoreal if and only if ${\rm Aut}(X)$ is an index two subgroup of its full automorphism group ${\rm Aut}(X)^{\pm}$ such that ${\rm Aut}(X)^{\pm}\backslash {\rm Aut}(X)$ contains no involutions. 
This idea allowed D. Singerman to prove the existence of pseudoreal Riemann surfaces of any genus $g>1$ (Theorem \ref{existencia}). Moreover, it has been used by E. Bujalance, M. Conder and A.F. Costa in \cite{BuConCo10} and \cite{BuCo14} to classify the full automorphism groups of pseudoreal Riemann surfaces up to genus $4$.

 The aim of this paper is to provide an introduction to both approaches and to show some new results on the classification of automorphism groups of pseudoreal Riemann surfaces.  The paper is organized as follows. 
 
 In section $1$ we deal with the number theoretical approach. We first provide the background material, 
 defining the concepts of field of moduli and  of pseudoreal curve. 
 Moreover, we introduce the theorem by D\`ebes-Emsalem and some consequences of it 
 when $X\slash{\rm Aut}(X)$ has genus $0$.  In particular we prove the following result.

\begin{introthm}[Theorem \ref{cen}] Let $K$ be an infinite perfect field of characteristic $p\neq 2$ and let $F$ be an algebraic closure of $K$. Let $X$ be a smooth projective algebraic curve of genus $g\ge 2$ defined over $F$, $H$ the center of ${\rm Aut}(X)$ and assume that $X\slash H$ has genus $0$. 
If ${\rm Aut}(X)/H$ is neither trivial nor cyclic (if $p=0$) or cyclic of order relatively prime to $p$ (if $p> 0$) 
then $X$ can be defined over its field of moduli relative to the extension $F\slash K$.
\end{introthm}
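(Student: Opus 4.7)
The plan is to apply the D\`ebes-Emsalem criterion via the intermediate quotient $X/H$, generalizing the method used by Huggins in the hyperelliptic case. Set $G = \operatorname{Aut}(X)$, $N = G/H$, and $Y := X/H$. Since $H = Z(G)$ is normal, $N$ acts on $Y$, and the action is faithful: any element of $G$ acting trivially on $Y$ restricts to the identity on $F(X)^H$ and hence lies in $\operatorname{Gal}(F(X)/F(X)^H) = H$. By hypothesis $Y$ has genus $0$, so $Y \cong \mathbb{P}^1_F$ and $N$ embeds in $\operatorname{PGL}_2(F)$. The further quotient $B := X/G = Y/N$ is again of genus zero, and by the D\`ebes-Emsalem theorem it descends canonically to a curve $B_K$ over $K$; the criterion to verify is that $B_K$ carries a $K$-rational point lying outside the branch locus of $X \to B$.

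It suffices to produce a $K$-rational effective divisor on $B_K$ of odd degree. Indeed, on a genus zero curve the anticanonical class is $K$-rational of degree $2$, so any $K$-rational divisor class of odd degree can be adjusted modulo multiples of the anticanonical to yield a $K$-rational class of degree $1$, which is effective by Riemann-Roch and defines an isomorphism $B_K \cong \mathbb{P}^1_K$. Since $K$ is infinite, $\mathbb{P}^1_K$ then has infinitely many rational points, and we can select one avoiding the finite branch locus and conclude via D\`ebes-Emsalem.

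The natural candidate for such a divisor is the reduced branch locus of the intermediate cover $Y \to B$: it is intrinsic to the cover, hence Galois-invariant and $K$-rational on $B_K$. In characteristic $0$ the hypothesis forces $N$ to be dihedral, $A_4$, $S_4$, or $A_5$; the classical description of these actions on $\mathbb{P}^1$ produces in each case exactly three branch points, giving a $K$-rational effective divisor of odd degree $3$. In positive characteristic $p$, the hypothesis additionally admits cyclic $N$ of order $p$; such an $N$ acts unipotently on $\mathbb{P}^1$ with a single wildly ramified fixed point and contributes a branch divisor of odd degree $1$, while the non-cyclic families must be handled via the Dickson classification of finite subgroups of $\operatorname{PGL}_2(F)$.

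The main obstacle I expect is precisely this positive-characteristic case analysis: the list of non-cyclic subgroups of $\operatorname{PGL}_2(F)$ enlarges to include $\operatorname{PSL}_2(\mathbb{F}_q)$, $\operatorname{PGL}_2(\mathbb{F}_q)$, Borel-type groups, and elementary abelian $p$-groups with their normalizers, and one must verify in every family that the tame and wild contributions combine to give a reduced branch divisor of odd degree. A secondary technicality is making sure that the canonical descent of $X/G$ produced by D\`ebes-Emsalem coincides with the conic $B_K$ on which the Riemann-Roch argument is carried out, and that the chosen rational point meets the unramified-fiber hypothesis of the criterion.
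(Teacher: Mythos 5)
Your proposal is correct and follows essentially the same route as the paper: pass through the intermediate quotient $X/H\to X/G$, use the classification of non-cyclic finite subgroups of ${\rm PGL}_2$ to see that this cover has exactly three branch points, deduce a $K$-rational divisor of odd degree on the canonical model $B$ of $X/G$ (hence a $K$-rational point, by the same Riemann--Roch argument as Huggins' Lemma 5.1), and conclude via D\`ebes--Emsalem. The only point you compress is why the branch divisor descends to $B$ — the paper verifies this by showing the descent isomorphisms $h_\sigma$ of $X/G$ are unique and respect the tower $X\to X/H\to X/G$ because the center is characteristic — and, like the paper, you defer the positive-characteristic case analysis to the existing literature.
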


This immediately implies the following. 
\begin{introcor}[Corollary \ref{cencor}] 
Let $K$ be a  field of characteristic $0$ and let $F$ be an algebraic closure of $K$. 
Let $X$ be a smooth projective algebraic curve of genus $g\ge 2$ defined over $F$ such that $X\slash Z({\rm Aut}(X))$ has genus $0$ 
(where $Z({\rm Aut}(X))$ is the center of  ${\rm Aut}(X)$).
If $X$ can not be defined over its field of moduli $M_{F/K}(X)$, then ${\rm Aut}(X)$ is abelian.
\end{introcor}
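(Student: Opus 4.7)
The plan is to deduce the corollary directly from Theorem \ref{cen} by contraposition, with $H$ taken to be the center $Z(\mathrm{Aut}(X))$. I first verify that the hypotheses of Theorem \ref{cen} are indeed available: since $K$ has characteristic $0$ it contains $\mathbb{Q}$, so $K$ is both infinite and perfect, and of course $p=0\neq 2$. Moreover, the center $H=Z(\mathrm{Aut}(X))$ is a normal subgroup of $\mathrm{Aut}(X)$, and the hypothesis of the corollary gives that $X/H$ has genus $0$. So all the structural assumptions of Theorem \ref{cen} are satisfied.

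Next I take the contrapositive of Theorem \ref{cen} in characteristic zero. Under the standing hypotheses, the theorem asserts that if $\mathrm{Aut}(X)/H$ is neither trivial nor cyclic, then $X$ descends to its field of moduli $M_{F/K}(X)$. Therefore, if $X$ does \emph{not} descend to $M_{F/K}(X)$, the only possibility left is that $\mathrm{Aut}(X)/Z(\mathrm{Aut}(X))$ is either trivial or cyclic.

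The final step is purely group-theoretic and is the classical lemma that for any group $G$, if the quotient $G/Z(G)$ is cyclic then $G$ is abelian. (Briefly: pick $gZ(G)$ generating $G/Z(G)$; then every element of $G$ is of the form $g^n z$ with $z\in Z(G)$, and two such elements obviously commute.) Applying this to $G=\mathrm{Aut}(X)$ concludes that $\mathrm{Aut}(X)$ is abelian in both remaining cases (the trivial case of course gives $\mathrm{Aut}(X)=Z(\mathrm{Aut}(X))$ directly).

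There is essentially no serious obstacle here: once Theorem \ref{cen} is in place, the corollary is a formal consequence combined with a standard fact about central quotients. The only things worth being careful about are the verification that char $0$ delivers the ``infinite perfect field with $p\neq 2$'' assumption of Theorem \ref{cen}, and the recognition that ``trivial or cyclic'' is exactly the group-theoretic hypothesis under which $G/Z(G)$ forces $G$ to be abelian.
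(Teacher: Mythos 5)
Your proof is correct and follows exactly the paper's route: apply Theorem \ref{cen} with $H=Z({\rm Aut}(X))$, take the contrapositive, and invoke the standard fact that a group whose central quotient is cyclic (or trivial) is abelian. The paper states this in one line ("A group over its center is cyclic if and only if the group is abelian"), and your only addition is the (correct) verification that characteristic $0$ supplies the "infinite perfect field, $p\neq 2$" hypotheses.
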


The second section considers the NEC group approach. We recall the proof of the existence of pseudoreal Riemann surfaces in every genus 
and a recent result by C.~Baginski  and G.~Gromadzki \cite{BaGo10} which characterizes groups appearing as full automorphisms groups of pseudoreal Riemann surfaces.
In this setting, studying group extensions of degree two, we provide the following result, which gives an alternative 
proof of \cite[Corollary 4.3 (b)]{DebEms99} 
for the extension $\cc/\rr$ and new conditions on the automorphism group of a pseudoreal Riemann surface.

\begin{introthm}[Corollary \ref{comp}] Let $G$ be the conformal automorphism group of a Riemann surface $X$. 
Suppose that $Z(G)=\{1\}$ and that either ${\rm Out}(G)$ has no involutions or ${\rm Inn}(G)$ has group complement in ${\rm Aut}(G)$. 
Then $X$ is not pseudoreal.
\end{introthm}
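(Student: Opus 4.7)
The plan is to argue by contradiction: assume $X$ is pseudoreal, and construct an involution in $G^{\pm}\setminus G$, where $G^{\pm}={\rm Aut}(X)^{\pm}$ is the full (conformal plus anticonformal) automorphism group. By the NEC--group characterization recalled in this section, $X$ is pseudoreal precisely when $G\triangleleft G^{\pm}$ has index $2$ and no element of $G^{\pm}\setminus G$ is an involution, so producing one yields the desired contradiction.

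The main device is the conjugation homomorphism $\Phi\colon G^{\pm}\to {\rm Aut}(G)$, whose kernel is the centralizer $C_{G^{\pm}}(G)$. Since $C_{G^{\pm}}(G)\cap G = Z(G)=\{1\}$, I first dispose of the case $C_{G^{\pm}}(G)\neq\{1\}$: any non-trivial $\tau$ in the centralizer lies in $G^{\pm}\setminus G$ and satisfies $\tau^{2}\in C_{G^{\pm}}(G)\cap G=\{1\}$, so $\tau$ is already an anticonformal involution. Hence I may assume $\Phi$ is injective. Then $\Phi(G)={\rm Inn}(G)$ and $\Phi(G^{\pm})$ is a subgroup of ${\rm Aut}(G)$ containing ${\rm Inn}(G)$ with index $2$; in particular, the image of $\Phi(G^{\pm})$ in ${\rm Out}(G)$ is a subgroup of order exactly $2$.

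Now the two hypotheses each force a contradiction. If ${\rm Out}(G)$ contains no involutions, the existence of an order-$2$ subgroup in ${\rm Out}(G)$ is immediately absurd. If instead ${\rm Aut}(G)={\rm Inn}(G)\rtimes H$ for some complement $H$, then for any $\sigma\in\Phi(G^{\pm})\setminus{\rm Inn}(G)$ I write $\sigma=ih$ with $i\in{\rm Inn}(G)$ and $1\neq h\in H$; the element $h=i^{-1}\sigma$ lies in $\Phi(G^{\pm})\cap H$, which is disjoint from ${\rm Inn}(G)$ and injects into $\Phi(G^{\pm})/{\rm Inn}(G)\cong\zz/2$, so $h^{2}=1$. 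Injectivity of $\Phi$ then lifts $h$ to an involution $\tilde h\in G^{\pm}\setminus G$, again contradicting pseudorealness.

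The main conceptual hurdle, rather than any calculation, is recognizing that the hypothesis $Z(G)=\{1\}$ rigidifies the abstract extension $1\to G\to G^{\pm}\to\zz/2\to 1$ into a concrete subextension of $1\to {\rm Inn}(G)\to {\rm Aut}(G)\to {\rm Out}(G)\to 1$. Once this reduction is in place, the two alternative hypotheses correspond exactly to the two clean ways of forcing a section over $\zz/2$: either ${\rm Out}(G)$ simply has no order-$2$ subgroup available, or the outer-automorphism extension itself splits and the splitting restricts to one. A subtlety worth flagging in the write-up is the necessity of handling $C_{G^{\pm}}(G)\neq\{1\}$ separately before invoking either hypothesis, since that case is invisible to the outer action yet produces the involution for free.
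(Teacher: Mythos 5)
Your proof is correct, and it takes a genuinely different route from the paper's. The paper first builds a general classification of degree-two extensions $1\to G\to \overline G\to C_2\to 0$ in terms of equivalence classes of pairs $(\phi,g)\in {\rm Aut}(G)\times G$ with $\phi^2=\phi_g$ and $\phi(g)=g$ (Lemmas \ref{lemma3.3.1} and \ref{3.3.2}, Theorem \ref{theorem}), then shows that under either hypothesis every such pair is equivalent to one with second coordinate $e$, so that every extension of $G$ by $C_2$ splits, and only at the end specializes to $\overline G={\rm Aut}^{\pm}(X)$. You instead work directly with the conjugation homomorphism $\Phi\colon {\rm Aut}^{\pm}(X)\to {\rm Aut}(G)$: after disposing of a nontrivial centralizer $C_{{\rm Aut}^{\pm}(X)}(G)$, where an anticonformal involution appears for free from $Z(G)=\{1\}$ and the index-two condition, injectivity of $\Phi$ realizes ${\rm Aut}^{\pm}(X)$ concretely as a subgroup of ${\rm Aut}(G)$ containing ${\rm Inn}(G)$ with index two, and each hypothesis then produces the forbidden involution on the spot (an order-two element of ${\rm Out}(G)$ in the first case, an order-two element of the complement $H$ in the second, pulled back along $\Phi^{-1}$). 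All the individual steps check out, including the point you rightly flag that the centralizer case must be treated before either hypothesis is invoked; in the paper's formalism that case is invisible because it is absorbed into the subcase $\phi\in{\rm Inn}(G)$ of the pair $(\phi,g)$. Your argument is shorter and self-contained, and makes transparent that $Z(G)=\{1\}$ is exactly what rigidifies the abstract extension into a subextension of $1\to{\rm Inn}(G)\to{\rm Aut}(G)\to{\rm Out}(G)\to 1$; the paper's heavier machinery buys, in exchange, the full bijection between extensions of $G$ by $C_2$ and the set $E(G)$, together with the criteria for splitness and for being a direct product, which have independent interest. Both arguments in fact establish the same stronger conclusion, namely that the extension $1\to G\to {\rm Aut}^{\pm}(X)\to C_2\to 0$ splits.
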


Finally, we consider the maximal full automorphism groups of pseudoreal Riemann surfaces (see Theorem \ref{maximalll}) 
and we prove the following result.

\begin{introthm}[Theorem \ref{abeliann}] If a pseudoreal Riemann surface $X$ has maximal full automorphism group, 
then its conformal automorphism group is not abelian and $X/Z({\rm Aut}(X))$ has positive genus. \end{introthm}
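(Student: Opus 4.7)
The plan is to treat the two conclusions in sequence, reducing the positive genus statement to a formal consequence of the non-abelianness via Corollary \ref{cencor}, so that the main work lies in proving non-abelianness.

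For the non-abelianness of $G = \Aut(X)$, I would exploit Theorem \ref{maximalll}, which gives the signature(s) of NEC groups $\Gamma$ realizing the maximal bound on $|\Aut^{\pm}(X)|$ together with a finite list of admissible structures for $G^{\pm} = \Aut^{\pm}(X)$. The strategy is to argue by contradiction: assume $G$ is abelian. Then the short exact sequence
\[
1 \longrightarrow G \longrightarrow G^{\pm} \longrightarrow \zz/2 \longrightarrow 1
\]
is a degree-two extension of an abelian group, and pseudorealness forces the non-trivial coset $G^{\pm}\setminus G$ to contain no involutions, which in the abelian case places strong constraints on both the induced involution $\tau \in \Aut(G)$ and the associated cocycle class (as in the elementary extension analysis leading to Corollary \ref{comp}). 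Combined with the presentation coming from the maximal signature---which forces $G^{\pm}$ to be generated by few elements of small prescribed orders---this reduces to a finite verification: for each admissible signature in Theorem \ref{maximalll}, I would check that no abelian $G$ realizes a pseudoreal $X$, either by exhibiting an involution in $G^{\pm}\setminus G$ forced by the structure, or by showing that the image in $G^{\pm}$ of the canonical Fuchsian subgroup $\Gamma^+$ cannot be abelian given the orders of its elliptic generators.

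Once $G$ is non-abelian, the second conclusion is immediate. By definition a pseudoreal Riemann surface has $\rr$ as its field of moduli relative to $\cc/\rr$ but is not definable over $\rr$. Applying the contrapositive of Corollary \ref{cencor} with $K=\rr$ and $F=\cc$: if $X/Z(G)$ had genus zero, then non-definability over $\rr$ would force $\Aut(X)$ to be abelian, contradicting the previous step. Hence $X/Z(\Aut(X))$ has positive genus.

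The main obstacle is the first step: ruling out abelian candidates for $G$ realizing the maximal signatures. Theorem \ref{maximalll} should provide a short list of signatures, but each requires a separate verification of compatibility with the no-involution condition on $G^{\pm}\setminus G$. One expects that the small orders appearing in a maximal signature force non-commutativity in any faithful quotient, but the interplay between the cocycle class, the involution count, and the precise signature data is the technical core; particular care will be needed in borderline small-genus cases where accidental abelian quotients could, a priori, slip through.
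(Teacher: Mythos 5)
Your second step --- deducing that $X/Z({\rm Aut}(X))$ has positive genus from the non-abelianness via the contrapositive of Corollary \ref{cencor} applied to $\cc/\rr$ --- is exactly the paper's argument and is correct. The gap is in the first step. Theorem \ref{maximalll} gives a \emph{single} admissible NEC signature, $(1;-;[2,3])$, whose canonical Fuchsian signature is $(0;[2,2,3,3])$ by Theorem \ref{5.1}, with $|{\rm Aut}(X)|=6(g-1)$. Neither of the two mechanisms you propose actually rules out abelian $G$ here. The orders of the elliptic generators do \emph{not} force non-commutativity: the abelianization of the Fuchsian group of signature $(0;[2,2,3,3])$ is $C_2\oplus C_3\cong C_6$, and $C_6$ is a genuine order-preserving smooth quotient (it acts on a genus $2$ surface with this signature). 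Nor does the extension analysis force an involution in $G^{\pm}\setminus G$ when $G$ is abelian: $C_6\le C_{12}$ is a non-split extension, and one can write down an order-preserving epimorphism from the $(1;-;[2,3])$ NEC group onto $C_{12}$ whose conformal part is $C_6$. So the ``borderline small-genus case'' you flag at the end is not a technicality to be checked later; it is precisely where the content of the theorem lies, and your toolkit as described cannot close it.

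What the paper does instead: since an abelian $G$ generated by order-preserving images of the elliptic generators is a quotient of the abelianization $C_6$, it must be cyclic. The remaining cyclic possibilities are then killed by inputs external to the extension analysis: for $g=2$ (where $|G|=6$) by the classification of pseudoreal genus-$2$ curves, whose only conformal automorphism group is $C_2$ (Table \ref{GEN2}, ultimately Cardona--Quer); and for $g>2$ by observing that a cyclic conformal group of order $6(g-1)>2g+2$ contains an element of order exceeding $2g+2$, which by Singerman's result \cite[Corollary 1]{Sin74b} forces an anticonformal involution, contradicting pseudoreality. To complete your proof you would need to import at least one of these two facts; the cocycle/involution-counting analysis of degree-two extensions alone is insufficient, as the $C_6\le C_{12}$ example shows.
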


Section $3$ is about the classification of pseudoreal Riemann surfaces of low genus. 
We develop an algorithm which allows to classify pseudoreal Riemann surfaces of a given genus $g\geq 2$
starting from the list of all groups acting on Riemann surfaces of that genus.
The algorithm has been implemented in a program written for Magma \cite{Magma} based on a program by J. Paulhus \cite{Pau15}.
By means of this program, we are able to provide the classification of pseudoreal Riemann surfaces up to genus $10$, extending previous results 
in \cite{BuConCo10, BuCo14}.

\begin{introthm}[Theorem \ref{class}]
Two finite groups $G$ and $\bar G$ are the conformal and full automorphism groups of a pseudoreal Riemann surface $X$ of genus 
$5 \leq g\leq 10$ if and only if $G = {\rm Aut}(X)$ and $\overline G = {\rm Aut}^{\pm}(X)$ appear in the corresponding table by genus 
among Tables \ref{table5}, \ref{table6}, \ref{table7}, \ref{table8}, \ref{table9} and \ref{table10}.
\end{introthm}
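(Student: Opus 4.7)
The proof is computational, reducing the classification to an exhaustive enumeration that can be executed in Magma. The starting input is J.~Paulhus's database \cite{Pau15}, which provides, for every genus $g\ge 2$ and every finite group $G$ acting as a conformal automorphism group of some Riemann surface of that genus, the signature $\sigma$ of the corresponding Fuchsian group $\Gamma^+$ together with the generating vectors $v$ representing the surface-kernel epimorphisms $\varphi\colon\Gamma^+\to G$, up to the natural equivalences. For $g\le 10$ this list is finite and fully explicit.

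For each triple $(G,\sigma,v)$ I would test whether the associated Riemann surface $X=\mathbb{H}/\ker(\varphi)$ is pseudoreal. By the framework recalled in Section~2, this amounts to deciding whether $\varphi$ admits an extension to an NEC epimorphism $\bar\varphi\colon\Gamma\to\bar G$, where $\Gamma\supset\Gamma^+$ has index two, $\bar G\supset G$ has index two, and the non-trivial coset $\bar G\setminus G$ contains no involution. Concretely, one enumerates the candidate index-two overgroups $\bar G$ of $G$, parametrised by an automorphism $\alpha\in{\rm Aut}(G)$ (the conjugation action of a coset representative) together with appropriate compatibility data imposed by $v$; for each candidate one first checks whether the extension is realisable from a NEC group with signature compatible with $\sigma$ via Riemann--Hurwitz, and then whether every element of $\bar G\setminus G$ has order strictly greater than two.

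The main obstacle is bookkeeping rather than mathematical depth. One must (i)~avoid double-counting Riemann surfaces by working modulo the natural actions on generating vectors used to assemble Paulhus's database, (ii)~ensure that the $\bar G$ produced is realised as the \emph{full} automorphism group ${\rm Aut}^\pm(X)$ rather than a proper subgroup, which is achieved by restricting to pairs for which no further enlargement within the genus-$g$ list occurs, and (iii)~handle each of the finitely many NEC signatures of $\Gamma$ compatible with $\sigma$. Once these points are correctly implemented, the procedure reduces to a finite case-check that is well within Magma's reach for $g\le 10$.

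Running the algorithm for each $g\in\{5,6,7,8,9,10\}$ then yields precisely Tables~\ref{table5}--\ref{table10}. The forward implication follows from the completeness of Paulhus's database: every pseudoreal Riemann surface of genus $g$ in this range appears in some triple $(G,\sigma,v)$ and is therefore tested by the algorithm. The converse is immediate by construction, since each row in the tables is certified by an explicit NEC epimorphism satisfying the involution-free coset condition, which by the discussion in Section~2 exhibits a genuine pseudoreal Riemann surface with the prescribed pair $({\rm Aut}(X),{\rm Aut}^\pm(X))$.
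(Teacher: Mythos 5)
Your overall strategy coincides with the paper's: both reduce the statement to a finite, Magma-executable enumeration built on Paulhus's database of triples $(G,s,v)$, both use the non-split-extension criterion (equivalently, no involutions in $\overline G\setminus G$) from Theorem \ref{bagofull}, and both certify each table row by an explicit NEC epimorphism. The paper additionally pre-filters to even-order groups and \emph{even} Fuchsian signatures, which is forced by Theorem \ref{5.1} (the canonical Fuchsian subgroup of a proper NEC group without reflections has every period doubled); your appeal to ``Riemann--Hurwitz compatibility'' gestures at this but the precise constraint is Singerman's period-doubling, not just a degree count.

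The genuine gap is in your point (ii), the verification that $\overline G$ is the \emph{full} group ${\rm Aut}^{\pm}(X)$. You propose ``restricting to pairs for which no further enlargement within the genus-$g$ list occurs,'' which is a group-and-signature-level check. That is exactly where the argument needs care: the paper splits into finitely maximal and non finitely maximal NEC signatures. In the finitely maximal case the action cannot extend and nothing more is needed. In the non finitely maximal case (the signatures $(1;-;[k,l])$, $(1;-;[k,k])$, $(2;-;[k])$ and $(3;-;[-])$ arising from Table \ref{Sing}), whether the NEC action extends to the overgroup $\Delta'$ depends on the \emph{particular generating vector}, namely on whether an explicit map such as $d'\mapsto (d')^{-1}$, $d''\mapsto (d')^2(d'')^{-1}(d')^{-2}$, $d'''\mapsto (d''')^{-1}$ induces an automorphism of $\overline G$ (Lemma \ref{BG4} and \cite[Lemmas 4.1--4.3]{BaGo10}). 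A pair-level exclusion would both discard genuine pseudoreal surfaces (those epimorphisms with non-maximal signature that nevertheless do not extend) and admit spurious ones: when the action does extend, the larger group $\Delta'/\ker\theta$ contains an anticonformal involution, so $X$ is in fact not pseudoreal. Without these epimorphism-level criteria the algorithm's output, and hence Tables \ref{table5}--\ref{table10}, would not be correct.
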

  
\noindent {\em Aknowledgments.} 
This paper grew out of the Master Thesis of Cristian Reyes Monsalve, 
defended in October 2016 at Universidad de Concepci\'on (Chile).
We thank Andrea Tironi for several useful discussions and careful reading,
Antonio Laface for inspiring conversations and for his help in the design of the Magma programs, 
Rub\'en Hidalgo, Jeroen Sijsling and Xavier Vidaux for key remarks which helped to improve the final version.
We especially thank Jennifer Paulhus for her kind assistance in the use of her program.

\section{The number theoretical approach}

\subsection{Fields of moduli of projective curves}

\noindent Everytime we say \emph{curve} we mean a smooth projective algebraic curve.

\begin{definition} \label{fielddefinition} Let $F$ be a field and let $X\subseteq\mathbb{P}^n_{F}$ 
be a curve defined as the zero locus of  homogeneous polynomials  with coefficients in $F$.
If $K\subset F$ is a subfield, we say that $X$  \emph{can be defined over} $K$,
or that $K$ is a \emph{field of definition} of $X$,
  if there exists a curve $Y\subseteq\mathbb{P}^m_{F}$ defined by polynomials with coefficients in $K$ and isomorphic to $X$ over $F$.

 If $f:X\longrightarrow Y$ is a morphism between two curves $X$ and $Y$ defined over a field $F$, 
 we say that \emph{$f$ is defined over $F$} if the polynomials defining $f$ have all their coefficients in $F$.
\end{definition}

If $F\slash K$ is a field extension, the group of automorphisms 
\[
{\rm Aut}(F\slash K):=\{\sigma\in {\rm Aut}(F):\ \sigma|_{K}={\rm id}_K\}  
\]
naturally acts on both curves and morphisms defined over $F$ in the following way.

\begin{definition} Let $F\slash K$ be a field extension and $\sigma\in {\rm Aut}(F\slash K)$.
\begin{enumerate}
\item Given a curve $X=Z(p_1,\dots,p_r)\subseteq\mathbb{P}^n_F$, where $p_1,\dots,p_r\in F[x_0,\dots,x_n]$ 
are homogeneous, let  $X^{\sigma}:=Z(p_1^{\sigma},\dots, p_r^{\sigma})$,  where 
$p_i^{\sigma}$ is obtained applying $\sigma$ to the coefficients of $p_i$. 
\item Given a morphism $f:X\rightarrow Y$ between curves defined over $F$, let
\[
f^{\sigma}:=\sigma\circ f\circ \sigma^{-1}:X^{\sigma}\rightarrow Y^{\sigma},
\] 
where $\sigma:Z\to Z^{\sigma}$ sends $[z_0:\ldots :z_n]$ to 
$[\sigma(z_0):\ldots :\sigma(z_n)]$.
\end{enumerate}
\end{definition}

\noindent A fundamental result in this area is the following

\begin{theorem}\label{T1} \textbf{(Weil's Theorem)} \cite[Theorem 1]{Weil56} Let $X$ be a curve defined over $F$ and let $F\slash K$ be a Galois extension. If for every $\rho\in {\rm Aut}(F\slash K)$ there exists an isomorphism $f_{\rho}:X\longrightarrow X^{\rho}$ defined over $F$ such that $$f_{\sigma\tau}=f_{\tau}^{\sigma}\circ f_{\sigma},\ \ \ \ \forall \sigma,\tau\in {\rm Aut}(F\slash K),$$ then there exist a curve $Y$ defined over $K$ and an isomorphism $g:X\rightarrow Y$ defined over $F$ such that   $g^{\rho}\circ f_{\rho}=g, \forall \rho\in {\rm Aut}(F\slash K)$.
\end{theorem}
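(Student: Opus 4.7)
The plan is classical Galois descent. The cocycle $\{f_\rho\}$ is exactly the descent datum needed to produce a $K$-form $Y$ of $X$. First I would reduce to the case where $F/K$ is finite: since $X$ and each $f_\rho$ are defined by finitely many polynomials, the entire datum already lives over a finite Galois subextension $F_0/K$ of $F/K$, so we may assume $G:={\rm Aut}(F/K)$ is finite.

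Second, I would define a twisted semi-linear action of $G$ on $X$ by $\rho\star x:=f_\rho^{-1}(\rho(x))$, where $\rho(x)=[\rho(z_0):\cdots:\rho(z_n)]\in X^\rho$ denotes the coordinate-wise conjugation. Using the elementary identity $\sigma\circ f_\tau^{-1}=(f_\tau^\sigma)^{-1}\circ\sigma$ (which is just the definition $f_\tau^\sigma=\sigma\circ f_\tau\circ\sigma^{-1}$ rearranged), a direct check yields
\[
\sigma\star(\tau\star x)=f_\sigma^{-1}\bigl((f_\tau^\sigma)^{-1}(\sigma\tau(x))\bigr)=(f_\tau^\sigma\circ f_\sigma)^{-1}(\sigma\tau(x))=f_{\sigma\tau}^{-1}(\sigma\tau(x))=(\sigma\tau)\star x,
\]
where the crucial third equality is precisely the cocycle hypothesis. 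Hence $\star$ is a genuine left $G$-action compatible with the $G$-action on $F$.

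Third, this semi-linear action extends to an action of $G$ on the function field $F(X)$ lifting the given action of $G$ on $F$. Setting $L:=F(X)^G$, Artin's lemma on fixed fields gives $[F(X):L]=|G|$ and $F(X)\cong L\otimes_K F$, so $L$ is a finitely generated extension of $K$ of the same transcendence degree as $F(X)/F$. Let $Y$ be the unique smooth projective $K$-curve with function field $L$; then $Y_F$ and $X$ have the same function field over $F$ and are therefore $F$-isomorphic via a map $g:X\to Y_F$ induced by the inclusion $L\hookrightarrow F(X)$. The identity $g^\rho\circ f_\rho=g$ holds because a function in $L$ is by definition invariant under $\varphi\mapsto\rho(\varphi)\circ f_\rho^{-1}$, which is exactly the required compatibility.

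The main obstacle is passing from the function-field level to an honest isomorphism of projective varieties: one must verify not only that $L=F(X)^G$ has the correct degree, but that the resulting $K$-curve $Y$ descends the projective embedding of $X$ rather than merely its birational class. For curves this is relatively clean, thanks to the uniqueness of the smooth projective model of a function field of transcendence degree one, but Weil's original argument, which extends to higher dimensions, instead embeds $X$ into a projective space by means of a $G$-invariant very ample line bundle (for instance a sufficiently high power of the canonical sheaf, which is canonically $G$-equivariant) and shows that the defining equations can be chosen with coefficients in $K$.
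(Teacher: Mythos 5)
The paper does not prove this statement at all: it is quoted verbatim as a classical result with a citation to Weil's 1956 paper, so there is no internal proof to compare yours against. On its own terms, your sketch is the standard modern Galois-descent argument, and its core is sound: the verification that $\rho\star x=f_\rho^{-1}(\rho(x))$ defines a genuine action (the third equality in your display being exactly the cocycle hypothesis) is correct, and for curves the passage from the fixed field $L=F(X)^G$ to a smooth projective $K$-model, with $F$-isomorphism $g$ coming from the uniqueness of the smooth projective model of a one-dimensional function field, does yield the required $Y$ and the compatibility $g^\rho\circ f_\rho=g$. This differs from Weil's own route, which, as you note, descends the projective embedding directly rather than working birationally; your version is cleaner but is genuinely restricted to curves, whereas the cited theorem holds for varieties. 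The one step you should not wave through is the opening reduction to a finite Galois subextension: when ${\rm Aut}(F/K)$ is infinite there are infinitely many maps $f_\rho$, and the claim that ``the entire datum already lives over a finite Galois subextension'' does not follow merely from each $f_\rho$ having finitely many coefficients. One must show that $f_\rho$ depends only on the restriction of $\rho$ to some fixed finite subextension (continuity of the cocycle), which requires an argument using the cocycle relation and the finiteness of ${\rm Aut}(X)$; without it the reduction is unjustified. For the application in this paper the extension is $\cc/\rr$, which is already finite, so the gap is harmless there, but in the stated generality it is a real missing step. A second, minor, caveat: over a non-perfect base $K$ the normal projective model of $L$ need not be smooth, so the argument as written is cleanest for $K$ perfect, which again covers all uses in the paper.
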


 It is natural to ask for the smallest field of definition of a curve, this leads to the concept of field of moduli.

\begin{definition}\label{fieldmoduli}
The \emph{field of moduli} of a curve $X$ defined over $F$ {\em relative to a Galois extension} $F\slash K$ is 
$$M_{F\slash K}(X):={\rm Fix}(F_K(X))\  \text{ where }\ F_K(X)=\{\sigma\in {\rm Aut}(F\slash K) : X\cong_{\overline{F}} X^{\sigma}\}.$$
\end{definition}

If $F\slash K$ is a Galois extension, then it can be easily proved that $M_{F\slash K}(X)\subseteq F'$ for any 
field of definition $F'$ of $X$ such that $K\subseteq F'\subseteq F$.
Moreover, it is clear that the relative fields of moduli of two isomorphic curves over $\overline F$ are isomorphic.

The following is an easy consequence of Weil's theorem together with the fact that $M_{F\slash R}(X)=R$, where $R=M_{F\slash K}(X)$ 
\cite[Proposition 2.1]{DebEms99}.

\begin{proposition}\label{trivialll} If $X$ is a curve defined over a field $F$, $F\slash K$ is a Galois extension and ${\rm Aut}(X)$ is trivial, then $X$ can be defined over its field of moduli $M_{F\slash K}(X)$.
\end{proposition}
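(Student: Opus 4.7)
\smallskip

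The plan is to apply Weil's theorem (Theorem~\ref{T1}) directly to the Galois extension $F/R$, where $R:=M_{F/K}(X)$. The triviality of ${\rm Aut}(X)$ will make the Weil cocycle condition automatic, so almost no work is needed beyond verifying the setup.

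First I would check that $F_K(X)$ is actually a subgroup of ${\rm Aut}(F/K)$. If $\sigma,\tau\in F_K(X)$, then from $X\cong X^{\sigma}$ I apply $\tau$ to obtain $X^{\tau}\cong X^{\sigma\tau}$, and combining with $X\cong X^{\tau}$ gives $X\cong X^{\sigma\tau}$; a similar argument handles inverses. Since $R={\rm Fix}(F_K(X))$ and $F/K$ is Galois, standard Galois theory then gives that $F/R$ is Galois with ${\rm Aut}(F/R)=F_K(X)$. (This is essentially the content of the remark that $M_{F/R}(X)=R$ quoted from \cite{DebEms99}.)

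Next, for each $\rho\in{\rm Aut}(F/R)=F_K(X)$, by the very definition of $F_K(X)$ one may choose an isomorphism $f_\rho\colon X\to X^{\rho}$ defined over $F$ (assuming, as is standard and as the setting of the paper permits, that $F$ is taken algebraically closed so that $\cong_{\overline{F}}$ coincides with $\cong_F$). Now for any $\sigma,\tau\in{\rm Aut}(F/R)$ both $f_{\sigma\tau}$ and $f_\tau^{\sigma}\circ f_\sigma$ are isomorphisms $X\to X^{\sigma\tau}$, so their composition
$$\left(f_\tau^{\sigma}\circ f_\sigma\right)^{-1}\circ f_{\sigma\tau}$$
is an element of ${\rm Aut}(X)$. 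Since ${\rm Aut}(X)$ is trivial by hypothesis, it equals the identity, and Weil's cocycle condition $f_{\sigma\tau}=f_\tau^{\sigma}\circ f_\sigma$ holds for free.

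Applying Theorem~\ref{T1} to the Galois extension $F/R$ and the family $\{f_\rho\}_{\rho\in{\rm Aut}(F/R)}$ then produces a curve $Y$ defined over $R$ together with an $F$-isomorphism $X\to Y$, which shows that $X$ can be defined over $R=M_{F/K}(X)$. There is really no serious obstacle here; the only subtle point to be careful about is that $F/R$ is genuinely Galois (so that Weil's theorem is applicable), and this is ensured by the fact that $F_K(X)$ is a subgroup.
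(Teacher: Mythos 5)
Your proof is correct and follows exactly the route the paper intends: the paper gives no written proof, merely noting that the statement is "an easy consequence of Weil's theorem together with the fact that $M_{F/R}(X)=R$" from \cite{DebEms99}, which is precisely your argument (pass to the Galois extension $F/R$ via the cited fact, choose arbitrary isomorphisms $f_\rho$, and observe that triviality of ${\rm Aut}(X)$ forces the Weil cocycle condition). The one point worth keeping explicit, as you do, is that passing from $\cong_{\overline F}$ to isomorphisms defined over $F$ and identifying ${\rm Aut}(F/R)$ with $F_K(X)$ rests on the quoted result of D\`ebes--Emsalem rather than on elementary Galois theory alone.
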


Since the generic curve of genus $g>2$ has trivial automorphism group (see \cite[Theorem 2]{Gre63}), by Proposition \ref{trivialll}  we deduce that $X$ is always defined over its field of moduli relative to a Galois extension. 
Moreover, for curves of genus $0$ and $1$,  it is  known that the field of moduli  is a field of definition (see Example \ref{g1}). 
By a result of H. Hammer and F. Herrlich any curve can be defined over a finite extension of its field of moduli \cite{HamHerr03}.
However there are examples of curves of any genus $g\geq 2$ with non-trivial automorphism groups whose field of moduli (relative to a certain field extension) is not a field of definition.
As Weil's theorem suggests, the structure of the automorphism group plays a key role in the definability problem over 
the field of moduli.

 \begin{example} \label{g1}
Let $X$ be a curve of genus $1$ defined over an algebraically closed field $F$ of  characteristic $p\not=2$ and 
let $F/K$ be a Galois extension.
It is well known that $X\cong_{F} C_{\lambda}$, where $C_{\lambda}\subseteq \mathbb{P}_{F}^2$ is the plane cubic defined by 
$$x_1^2x_2-x_0(x_0-x_2)(x_0-\lambda x_2)=0,$$ 
and $\lambda\in F-\{0,1\}$. Moreover $C_{\lambda}$ and $C_{\mu}$ are isomorphic if and only if $j(\lambda)=j(\mu)$, where $j$ is the $j$-invariant  \cite[Proposition 1.7, Ch. III]{Silv}. Given $\sigma\in {\rm Aut}(F/K)$ we have that $C_{\lambda}^{\sigma}=C_{\sigma(\lambda)}$.
Thus 
$$M_{F\slash K}(X):={\rm Fix}(\{\sigma\in{\rm Aut}(F\slash K): j(\lambda)=j(\sigma(\lambda))=\sigma(j(\lambda))\})=K(j(\lambda)).$$ 
For every $\lambda\in F$, one can find a smooth plane model for $C_{\lambda}$ which is defined over $K(j(\lambda))$ \cite[Proposition 1.4]{Silv}, 
so $M_{F\slash K}(X)$ is also a field of definition of $X$.
 \end{example}
 
  In this paper we will focus on the case of the field extension $\cc/\rr$. More precisely, we are interested in the following concept. 
  
\begin{definition} \label{pseudocurve} A \emph{pseudoreal curve} is a complex curve $X$ such that $M_{\mathbb{C}\slash\mathbb{R}}(X)=\mathbb{R}$ 
but $\mathbb{R}$ is not a field of definition for $X$.
\end{definition}

\subsection{D\`ebes-Emsalem theorem}
 
 We recall a very useful theorem, based on Weil's theorem, that gives sufficient conditions for the definability.
 Let $F\slash K$ be a Galois extension, $X$ be a smooth projective curve of genus $g\ge 2$ defined over $F$
  and assume that $K$ is the field of moduli $M_{F\slash K}(X)$.
 Then for every $\sigma\in {\rm Aut}(F\slash K)$ there is an isomorphism $f_{\sigma}:X\longrightarrow X^{\sigma}$.
This induces an isomorphism 
\[
\varphi_{\sigma}:X\slash {\rm Aut}(X)\longrightarrow X^{\sigma}\slash {\rm Aut}(X^{\sigma})
\]
that makes the following diagram commute 
\begin{center}
\begin{tikzpicture}
  \matrix (m) [matrix of math nodes,row sep=3em,column sep=4em,minimum width=2em]
  {
     X & X^{\sigma} \\
     X\slash {\rm Aut}(X) & X^{\sigma}\slash {\rm Aut}(X^{\sigma}). \\};
  \path[-stealth]
    (m-1-1) edge node [left] {$\pi$} (m-2-1)
            edge node [above] {$f_{\sigma}$} (m-1-2)
    (m-2-1.east|-m-2-2) edge node [below] {$\varphi_{\sigma}$}
            node [above] {} (m-2-2)
    (m-1-2) edge node [right] {$\pi^{\sigma}$} (m-2-2);
\end{tikzpicture}
\end{center}

 Composing $\varphi_{\sigma}$ with the canonical isomorphism $X^{\sigma}\slash {\rm Aut}(X^{\sigma})\longrightarrow (X\slash {\rm Aut}(X))^{\sigma}$ this gives  a family of isomorphisms $$\{\overline{\varphi}_{\sigma}:X\slash {\rm Aut}(X)\longrightarrow (X\slash {\rm Aut}(X))^{\sigma}\}_{\sigma\in {\rm Aut}(F\slash K)}$$ which satisfies the condition in Weil's Theorem. 
Thus there exists a curve $B$ isomorphic to $X\slash {\rm Aut}(X)$ over $F$ which is defined over $K$. 
This curve $B$ is the so called \emph{canonical K-model} for $X\slash {\rm Aut}(X)$.

\begin{theorem}\label{DE} \cite[Corollary 4.3, (c)]{DebEms99} Let $F\slash K$ be a Galois extension and 
$X$ be a smooth projective curve of genus $g\ge 2$ defined over $F$ such that 
the order of ${\rm Aut}(X)$ is not divisible by the characteristic of $K$.
Assume that $K$ is the field of moduli of $X$ and that $F$ is the separable closure of $K$.
If the canonical $K$-model $B$ of $X\slash {\rm Aut}(X)$ has at least one $K$-rational point off the branch point set of the natural quotient $\pi:X\to B$,
then $K$ is a field of definition of $X$.
\end{theorem}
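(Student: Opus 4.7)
The plan is to promote the family of isomorphisms $f_\sigma\colon X\to X^\sigma$, guaranteed to exist because $K=M_{F/K}(X)$, to a Weil cocycle and then apply Theorem \ref{T1}. By the discussion preceding the statement, the induced maps $\overline{\varphi}_\sigma\colon X/\Aut(X)\to (X/\Aut(X))^\sigma$ already satisfy the Weil cocycle and produce the canonical $K$-model $B$ together with an $F$-isomorphism $\psi\colon X/\Aut(X)\to B$. After composing the quotient with $\psi$, the morphism $\pi\colon X\to B$ satisfies $\pi^\sigma\circ f_\sigma=\pi$. What remains is to modify each $f_\sigma$ by an element of $\Aut(X^\sigma)$ (which preserves this relation) so that the cocycle identity $f_{\sigma\tau}=f_\tau^\sigma\circ f_\sigma$ holds on the nose.

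To perform the normalization, I would exploit the $K$-rational point $b\in B(K)$ and pick any $x_0\in\pi^{-1}(b)$. Since $b$ lies off the branch locus and $|\Aut(X)|$ is coprime to $\mathrm{char}(K)$, the morphism $\pi$ is \'etale above $b$ and its fiber is a free transitive $\Aut(X)$-torsor of size $|\Aut(X)|$. Because $b\in B(K)$, we have $(\pi^\sigma)^{-1}(b)=(\pi^{-1}(b))^\sigma$, so both $f_\sigma(x_0)$ and the conjugate point $x_0^\sigma$ lie in this fiber. Freeness produces a unique $\alpha_\sigma\in\Aut(X^\sigma)$ with $\alpha_\sigma(f_\sigma(x_0))=x_0^\sigma$; setting $g_\sigma:=\alpha_\sigma\circ f_\sigma$ yields a new family of isomorphisms still satisfying $\pi^\sigma\circ g_\sigma=\pi$, with the extra normalization $g_\sigma(x_0)=x_0^\sigma$.

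The cocycle identity for $\{g_\sigma\}$ then reduces to a single-point check. Both $g_{\sigma\tau}$ and $g_\tau^\sigma\circ g_\sigma$ are $F$-isomorphisms $X\to X^{\sigma\tau}$ that satisfy $\pi^{\sigma\tau}\circ(\cdot)=\pi$, so they differ by an element of $\Aut(X^{\sigma\tau})$ preserving the fiber over $b$. Using $g_\tau^\sigma=\sigma\circ g_\tau\circ\sigma^{-1}$ one computes $(g_\tau^\sigma\circ g_\sigma)(x_0)=\sigma(g_\tau(x_0))=\sigma(\tau(x_0))=x_0^{\sigma\tau}=g_{\sigma\tau}(x_0)$, so this automorphism fixes $x_0^{\sigma\tau}$ and, by freeness of the action on the unramified fiber, must be trivial. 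Weil's theorem applied to $\{g_\sigma\}$ then produces a curve defined over $K$ and $F$-isomorphic to $X$, as required.

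The main point of the argument is the free transitivity of $\Aut(X)$ on $\pi^{-1}(b)$: it makes the correcting automorphisms $\alpha_\sigma$ uniquely determined and reduces the verification of the cocycle to a single-point equality. This in turn is precisely what the hypotheses \emph{$b$ off the branch locus} and \emph{$|\Aut(X)|$ coprime to $\mathrm{char}(K)$} secure, ruling out wild ramification and nontrivial stabilizers that would otherwise obstruct the descent.
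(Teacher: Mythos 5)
Your argument is correct, but note that the paper does not actually prove this statement: Theorem \ref{DE} is quoted from D\`ebes--Emsalem \cite{DebEms99} without proof, so there is no internal argument to compare against. What you have written is essentially the standard proof from the cited source: use the unramified $K$-rational point $b$ to rigidify the descent data, normalizing each $f_\sigma$ so that it matches $\sigma$ itself on a chosen point $x_0$ of the fibre $\pi^{-1}(b)$, and then exploit the simple transitivity of ${\rm Aut}(X^{\sigma\tau})$ on the unramified fibre to upgrade the single-point identity $(g_\tau^\sigma\circ g_\sigma)(x_0)=g_{\sigma\tau}(x_0)$ to the full Weil cocycle condition. All the individual steps check out, including $\pi^\sigma\circ f_\sigma=\pi$ and the identification $(\pi^\sigma)^{-1}(b)=(\pi^{-1}(b))^\sigma$. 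Two small remarks. First, the simple transitivity of ${\rm Aut}(X)$ on $\pi^{-1}(b)$ already follows from $b$ lying off the branch locus (the quotient map is always separable), so the coprimality of $|{\rm Aut}(X)|$ with ${\rm char}(K)$ is not really what secures this step; and the $F$-rationality of the fibre points uses that $F$ is separably closed. Second, when $F/K$ is an infinite Galois extension, Weil's theorem requires a continuity condition on the cocycle (it must factor through a finite quotient); the paper's own statement of Theorem \ref{T1} suppresses this, so your omission is consistent with the level of detail adopted in the text, but it is worth being aware that this is where the finiteness of ${\rm Aut}(X)$ and the fact that $X$ is defined over a finitely generated subfield enter.
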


 \subsection{The case when $X/{\rm Aut}(X)$ has genus zero}
 
 In case the field has characteristic zero and the $K$-canonical model $B$ of $X/{\rm Aut}(X)$ has genus zero, 
 the existence of one $K$-rational point in $B$ implies that $B\cong_K \pp^1_K$, so there are infinitely many of them. 
 Moreover, the existence of a $K$-rational point is implied by the existence of a divisor of odd degree defined 
 over $K$ \cite[Lemma 5.1]{Hugg04}. These ideas led to the following beautiful result by B. Huggins.

\begin{theorem} \cite{Hugg04} \label{hug} Let $K$ be a perfect field of characteristic $p\neq 2$ and let $F$ be an algebraic closure of $K$. Let $X$ be a hyperelliptic curve defined over $F$ and let $G={\rm Aut}(X)\slash\langle i\rangle$, where $i$ is the hyperelliptic involution of $X$. If $G$ is not cyclic, or it is cyclic of order not divisible by $p$, then $X$ can be defined over its field of moduli relative to the extension $F\slash K$.
\end{theorem}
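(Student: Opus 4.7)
The plan is to derive the theorem from the D\`ebes-Emsalem criterion (Theorem \ref{DE}). The hyperelliptic involution $i$ is the unique involution on $X$ with rational quotient, hence is central in $\Aut(X)$; in particular $\langle i\rangle$ is normal and
\[
X/\Aut(X)\;\cong\;(X/\langle i\rangle)/G\;\cong\;\mathbb{P}^1_F/G,
\]
a curve of genus $0$. Therefore the canonical $K$-model $B$ of $X/\Aut(X)$ is a smooth conic over $K$. Since any smooth conic over $K$ with a single $K$-rational point is $K$-isomorphic to $\mathbb{P}^1_K$ and hence has infinitely many $K$-points, while the branch locus of $\pi\colon X\to B$ is finite, to invoke Theorem \ref{DE} it suffices to prove that $B(K)\neq\emptyset$.

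A standard Riemann--Roch argument on a smooth genus-zero curve over $K$ shows that the existence of any $K$-rational divisor of odd degree on $B$ forces $B\cong_K \mathbb{P}^1_K$. The strategy is then to produce such a divisor by exploiting the branch loci of $\mathbb{P}^1_F\to B_F$ and of the hyperelliptic cover $X\to \mathbb{P}^1_F$; both descend to $K$-rational effective divisors on $B$, since the Weil cocycle $\{\overline{\varphi}_\sigma\}$ defining $B$ is canonically built from the geometry of $X$. Under the hypothesis on $p$, the classification of finite subgroups of $\mathrm{PGL}_2(F)$ restricts $G$ to be cyclic, dihedral, $A_4$, $S_4$ or $A_5$.

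In each non-cyclic case the quotient $\mathbb{P}^1_F\to B_F$ has three branch points on $B$ with ramification profile $(2,2,n)$, $(2,3,3)$, $(2,3,4)$ or $(2,3,5)$. Since $\Aut(F/K)$ preserves ramification indices, any branch point whose index appears only once in the profile is individually $K$-rational, producing a $K$-divisor of degree $1$ on $B$. This handles $D_n$ for $n>2$, $A_4$, $S_4$ and $A_5$; for the profile $(2,2,2)$ of $D_2$, the sum of the three branch points is a $K$-rational divisor of odd degree $3$. Hence $B\cong_K \mathbb{P}^1_K$ in every non-cyclic case.

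The main obstacle is the cyclic case $G\cong \zz/n$ with $p\nmid n$: the two $G$-fixed points on $\mathbb{P}^1_F$ produce two branch points of identical index $n$ on $B$, and Galois may swap them, giving only an even-degree $K$-divisor. The remedy is to bring in the Weierstrass locus of $X\to \mathbb{P}^1_F$; its image in $B$ is a $K$-rational effective divisor whose degree is controlled by the $G$-orbit decomposition of the $2g+2$ Weierstrass points. A Hurwitz computation applied to $X\to B$ together with the decomposition $\Aut(X)/\langle i\rangle\cong G$ then restricts the possible orbit sizes, and a finite case analysis shows that the combined branch divisor of $\pi$ always carries a component of odd $K$-degree (this is the delicate step, requiring one to track which Weierstrass points can coincide with the $G$-fixed points and to compare parities). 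Once $B(K)\neq\emptyset$, Theorem \ref{DE} concludes that $X$ is defined over $K$.
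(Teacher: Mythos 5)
Your reduction to producing a $K$-rational point on the canonical model $B$, and your handling of the non-cyclic case, follow essentially the route the paper attributes to Huggins and reuses in the proof of Theorem \ref{cen}: when $G$ is a non-cyclic finite subgroup of ${\rm PGL}(2,F)$ the quotient $\mathbb{P}^1_F\to B_F$ has exactly three branch points, their sum descends to a $K$-rational divisor of degree $3$ on $B$, and \cite[Lemma 5.1]{Hugg04} together with Theorem \ref{DE} concludes. Two caveats already here: in characteristic $p>0$ the finite subgroups of ${\rm PGL}(2,F)$ are not exhausted by $C_n$, $D_n$, $A_4$, $S_4$, $A_5$ (one must also treat ${\rm PSL}(2,q)$, ${\rm PGL}(2,q)$ and the semi-elementary $p$-groups of \cite{MaVa80}, whose quotient covers do not have the $(2,a,b)$ branch profiles you list), and Theorem \ref{DE} as stated carries a tameness hypothesis on $|{\rm Aut}(X)|$ that you do not address.

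The genuine gap is the cyclic case. The step you yourself flag as ``delicate'' --- extracting an odd-degree $K$-divisor on $B$ from the $G$-orbit structure of the Weierstrass points --- is not carried out, and it cannot be carried out in the generality you aim at: when $G$ is cyclic of order prime to $p$ (in particular in characteristic $0$) the conclusion is false. Earle's and Shimura's pseudoreal hyperelliptic curves, and more generally the curves of \cite[Theorem 5.0.5]{Hugg05} recalled in Section 3, have cyclic reduced automorphism group and are not definable over their field of moduli; this is exactly why the paper refers the tame cyclic case to the separate analysis of \cite{LRS13}. (The hypothesis as printed, ``cyclic of order not divisible by $p$,'' has the negation reversed relative to \cite{Hugg04}; Huggins assumes $G$ not cyclic, or cyclic of order \emph{divisible} by ${\rm char}(K)$.) The cyclic case that actually remains to be proved is therefore the wildly ramified one, $p\mid |G|$, where the cover $\mathbb{P}^1_F\to B_F$ has a distinguished totally and wildly ramified branch point that is automatically Galois-stable, hence a $K$-rational point of $B$ --- a mechanism entirely different from the parity count you sketch. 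As written, your argument either attempts to prove a false statement or leaves the one nontrivial case of the theorem unproved.
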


The proof relies on the fact that  the quotient group ${\rm Aut}(X)\slash\langle i\rangle$ acts on $\pp_F^1$, and thus 
it is isomorphic to a finite subgroup of ${\rm PGL}(2,F)$ (when $p=0$ this is isomorphic  to either  $C_n, D_n, A_4,A_5$ or $S_4$  \cite{MaVa80}).
The author showed that in all cases, except in the cyclic case, one can find a $K$-rational point in the canonical model $B$ 
of $X\slash {\rm Aut}(X)$ by studying explicitely the action of ${\rm Aut}(F/K)$ on the function field of $B$.

The case of hyperelliptic curves such that  ${\rm Aut}(X)\slash\langle i\rangle$ is cyclic of order coprime to the characteristic of the 
ground field has been completely described in \cite{LRS13}.

In  \cite{Kon09} Kontogeorgis generalized Theorem \ref{hug} considering certain $p$-cyclic covers of the projective line,
where $p$ is prime, in any characteristic. In particular he proved the analogous of Huggins' theorem in case the subgroup generated by 
the cyclic automorphism of order $p$ is normal in ${\rm Aut}(X)$.
Recently, this has been further  generalized by R.~Hidalgo and S.~Quispe \cite{HidQui15} by considering 
some particular subgroups of the automorphisms group of the curve, defined as follows.

\begin{definition} Let $X$ be a curve. A subgroup $H\subseteq {\rm Aut}(X)$ is \emph{unique up to conjugation} 
if any subgroup $K\subseteq {\rm Aut}(X)$ isomorphic to $H$ such that the signatures of the covers 
$\pi_H:X\longrightarrow X\slash H$ and $\pi_K:X\longrightarrow X\slash K$ are the same is conjugated to $H$.
\end{definition}

Observe that the subgroup $H$ generated by the hyperelliptic involution or by a cyclic automorphism of prime order $p$ 
with quotient $X/H$ of genus zero are unique up to conjugation. In the following, we denote by $N_G(H)$ the normalizer of 
a subgroup $H$ of $G$.

\begin{theorem} \cite[Theorem 1.2]{HidQui15} Let $K$ be an infinite perfect field of characteristic $p\neq 2$ and let $F$ be an algebraic closure of $K$. Let $X$ be a smooth projective algebraic curve of genus $g\ge 2$ defined over $F$ and let $H$ be a subgroup of ${\rm Aut}(X)$ which is unique under conjugation, so that $X\slash H$ has genus $0$. If $N_{{\rm Aut}(X)}(H)\slash H$ is neither trivial nor cyclic (if $p=0$) or cyclic of order relatively prime to $p$ (if $p> 0$) then $X$ can be defined over its field of moduli relative to the extension $F\slash K$.
\end{theorem}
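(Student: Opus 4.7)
The plan is to mimic Huggins' approach to Theorem \ref{hug}, extending the D\`ebes-Emsalem strategy from $\Aut(X)$ to the subgroup $H$ by exploiting its uniqueness up to conjugation. Set $R = M_{F/K}(X)$; by definition of the field of moduli, for every $\sigma \in \Aut(F/R)$ there is an $F$-isomorphism $f_\sigma : X \to X^\sigma$. The subgroup $f_\sigma H f_\sigma^{-1}$ of $\Aut(X^\sigma)$ is isomorphic to $H^\sigma$, and the cover $X^\sigma \to X^\sigma/(f_\sigma H f_\sigma^{-1})$ has the same signature as $X^\sigma \to X^\sigma/H^\sigma$ (both being $F$-equivalent to $X \to X/H$). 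Hence by uniqueness of $H^\sigma$ up to conjugation in $\Aut(X^\sigma)$, after composing $f_\sigma$ with a suitable element of $\Aut(X^\sigma)$ one may assume $f_\sigma(H) = H^\sigma$. Each such $f_\sigma$ then descends to an isomorphism $\bar f_\sigma : X/H \to (X/H)^\sigma$, determined up to the action of $\bar N := N_{\Aut(X)}(H)/H$.

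Next I would carry out Weil descent (Theorem \ref{T1}) for $\{\bar f_\sigma\}_\sigma$: the $\bar N$-ambiguity means these satisfy the cocycle relation only modulo $\bar N$, but a non-abelian cohomology argument, in the spirit of \cite{DebEms99}, supplies a curve $B$ defined over $R$ and an $F$-isomorphism $g : X/H \to B$ compatible with the $\bar f_\sigma$. Since $X/H$ has genus zero, $B$ is a smooth conic over $R$. The action of $\bar N$ on $X/H$ transports through $g$ to an action on $B$ defined over $R$, embedding $\bar N$ as a finite subgroup of $\Aut_R(B)$.

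The third step is to produce an $R$-rational point of $B$ lying off the branch locus of $\pi : X \to B$. This uses the classical fact that if a smooth conic $B$ over an infinite perfect field $R$ has no $R$-point, then every finite subgroup of $\Aut_R(B)$ is cyclic (and of order prime to $p$ in characteristic $p > 0$). This follows from the classification of finite subgroups of $\operatorname{PGL}_2(\overline R)$ together with the observation that in the dihedral and exceptional cases at least one ramification orbit of $B \to B/G$ is $G$-invariant and of odd cardinality, hence yields an $R$-rational divisor of odd degree, hence an $R$-point on the conic $B$. The hypothesis on $\bar N = N_{\Aut(X)}(H)/H$ therefore forces $B(R) \neq \emptyset$, and since $R$ is infinite we may pick an $R$-point avoiding the finite branch locus.

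Finally I would invoke the version of Theorem \ref{DE} valid for a subgroup unique up to conjugation, obtained by running the argument of \cite[Corollary 4.3]{DebEms99} with $H$ in place of $\Aut(X)$: a branch-free $R$-rational point on the canonical $R$-model of $X/H$ lifts, via Weil's theorem applied to $X$ itself, to a descent of $X$ to $R = M_{F/K}(X)$, which is the desired conclusion. The hard part of the program is the cohomological step of the second paragraph, where the uniqueness-up-to-conjugation hypothesis on $H$ (rather than the stronger property that $H$ be characteristic in $\Aut(X)$) has to be leveraged to upgrade a cocycle modulo $\bar N$ to a genuine Weil cocycle; the classification step and the final descent then proceed largely in parallel with Huggins' hyperelliptic proof.
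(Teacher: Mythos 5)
The paper does not reprove this theorem (it is quoted from Hidalgo--Quispe), but its proof of Theorem \ref{cen} is explicitly the same argument, so I compare you against that. Your plan starts correctly --- use uniqueness up to conjugation to adjust $f_\sigma$ so that $f_\sigma H f_\sigma^{-1}=H^\sigma$ --- but then takes a wrong turn. The object descended by Weil's theorem is not $X/H$ but the quotient $X/N_{{\rm Aut}(X)}(H)$: once $f_\sigma$ is required to carry $H$ to $H^\sigma$, any two admissible choices differ by an element of $N_{{\rm Aut}(X)}(H)$, so the induced map $h_\sigma$ on $X/N_{{\rm Aut}(X)}(H)$ is \emph{unique} and the family $\{h_\sigma\}$ is automatically a Weil cocycle. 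No non-abelian cohomology is needed; the step you flag as ``the hard part'' (upgrading a cocycle modulo $\bar N$ on $X/H$ to a genuine one) is precisely the step the actual proof avoids by passing to the quotient. It also must be avoided for the endgame: D\`ebes--Emsalem (Theorem \ref{DE}, or its variant Theorem \ref{AQ} for subgroups unique up to conjugation) requires a rational point on the canonical model of $X/N_{{\rm Aut}(X)}(H)$, not of $X/H$, so a rational point on your $B$ is not the right input for the final descent.

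More seriously, the ``classical fact'' in your third step is false: the anisotropic conic $x^2+y^2+z^2=0$ over $\rr$ has no real point, yet its group of $\rr$-automorphisms is isomorphic to ${\rm SO}(3,\rr)$ and contains $A_4$, $S_4$, $A_5$ and all dihedral groups. Your proposed justification also fails on its own terms: for $G=A_4,S_4,A_5$ and $D_n$ with $n$ even, the $G$-orbits lying over the three branch points of $\pp^1\to\pp^1/G$ have cardinalities $|G|/e_i$, all of which are even, so there is no $G$-invariant ramification orbit of odd cardinality upstairs. The correct parity argument lives downstairs: since $N_{{\rm Aut}(X)}(H)/H$ is a non-cyclic finite subgroup of ${\rm PGL}(2,F)$, the cover $X/H\to X/N_{{\rm Aut}(X)}(H)$ branches over exactly \emph{three} points, and this three-point set is preserved by the (unique) maps $h_\sigma$, hence gives a degree-$3$ rational divisor on the canonical $K$-model of $X/N_{{\rm Aut}(X)}(H)$; Huggins' \cite[Lemma 5.1]{Hugg04} then produces the rational point exactly where D\`ebes--Emsalem needs it. This is the content of Claims 2--4 in the paper's proof of Theorem \ref{cen}, which is the template you should follow.
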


A similar proof led us to prove the following result, where $H$ is replaced by the center of the automorphism group.
Observe that the center is not unique up to conjugation in general (see Example \ref{cu}).

\begin{theorem}\label{cen}  Let $K$ be an infinite perfect field of characteristic $p\neq 2$ and let $F$ be an algebraic closure of $K$. Let $X$ be a smooth projective algebraic curve of genus $g\ge 2$ defined over $F$, $H$ the center of ${\rm Aut}(X)$ and assume that $X\slash H$ has genus $0$. 
If ${\rm Aut}(X)/H$ is neither trivial nor cyclic (if $p=0$) or cyclic of order relatively prime to $p$ (if $p> 0$) 
then $X$ can be defined over its field of moduli relative to the extension $F\slash K$.
\end{theorem}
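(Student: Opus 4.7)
I would follow the strategy of \cite{HidQui15}, with the role of the subgroup ``unique up to conjugation'' played by the center $H = Z({\rm Aut}(X))$, which is automatically characteristic. The point is to descend $X/H$ together with the induced action of $\bar G := {\rm Aut}(X)/H$ to a $K$-model, identify the quotient as the canonical $K$-model of $X/{\rm Aut}(X)$, exhibit on it a $K$-rational divisor of odd degree, and then invoke Theorem~\ref{DE}.

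First, because $H$ is characteristic, each isomorphism $f_\sigma\colon X\to X^\sigma$ (for $\sigma\in F_K(X)$) induces a group isomorphism ${\rm Aut}(X)\to {\rm Aut}(X^\sigma)$ sending $H$ onto $Z({\rm Aut}(X^\sigma))$, and hence descends to $\bar f_\sigma\colon X/H\to X^\sigma/H^\sigma \cong (X/H)^\sigma$. Applying Theorem~\ref{T1} to this family (after the standard modification to satisfy the cocycle condition) yields a smooth genus-$0$ curve $B'$ over $K$ with $B' \times_K F \cong X/H$. By the same naturality, the action of $\bar G$ on $X/H$ descends to an action of a $K$-form $\bar G_K$ on $B'$, and by uniqueness of Galois descent the quotient $B := B'/\bar G_K$ is $K$-isomorphic to the canonical $K$-model of $X/{\rm Aut}(X)$ constructed just before Theorem~\ref{DE}.

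Since $\bar G$ acts faithfully on $X/H \cong \pp^1_F$, it embeds in ${\rm PGL}_2(F)$, and under the hypothesis on the characteristic the classification of its finite subgroups forces $\bar G \cong C_n$, $D_n$, $A_4$, $S_4$, or $A_5$. The theorem's hypothesis excludes the trivial and cyclic cases, so in each of the remaining cases the geometric quotient $\pp^1_F \to \pp^1_F/\bar G$ has exactly three branch points. Because $B' \to B$ is defined over $K$, its branch locus is a $K$-rational reduced effective divisor of degree $3$ on $B$. Since ${\rm char}\,K \neq 2$, Huggins' Lemma~5.1 \cite{Hugg04} produces a $K$-rational point on $B$, whence $B \cong_K \pp^1_K$. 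Because $K$ is infinite while the branch locus of $\pi\colon X \to B$ is finite, we can choose a $K$-rational point of $B$ off that branch locus; Theorem~\ref{DE} then concludes that $X$ is definable over $K = M_{F/K}(X)$.

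The main obstacle is the verification of the three-branch-point structure in positive characteristic, where Dickson's classification of finite subgroups of ${\rm PGL}_2(F)$ admits further modular groups (such as ${\rm PSL}_2(\ff_q)$) whose geometry on $\pp^1$ must be analyzed separately. The coprimality hypothesis is calibrated precisely so that, beyond the trivial case, only cyclic groups need to be excluded: these produce two branch points rather than three, an even divisor on $B$ on which Huggins' lemma is silent.
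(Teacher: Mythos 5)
Your overall strategy is the paper's: exploit the fact that the non-cyclic quotient ${\rm Aut}(X)/H$ acting on $X/H\cong\pp^1_F$ has exactly three branch points, transport these to a $K$-rational divisor of degree $3$ on the canonical $K$-model $B$ of $X/{\rm Aut}(X)$, apply \cite[Lemma 5.1]{Hugg04} to get a $K$-point, and conclude with Theorem~\ref{DE}. However, your middle step contains a genuine gap. You propose to apply Weil's Theorem~\ref{T1} to the induced family $\bar f_\sigma\colon X/H\to (X/H)^\sigma$ ``after the standard modification to satisfy the cocycle condition,'' so as to descend $X/H$ \emph{together with the $\bar G$-action} to a $K$-model $B'$. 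No such standard modification exists: two choices of $f_\sigma\colon X\to X^\sigma$ differ by an element of $G={\rm Aut}(X)$, which descends to a generally nontrivial automorphism of $X/H$ (an element of $G/H\neq 1$), so $\bar f_\sigma$ is not canonically determined and the family need not satisfy $\bar f_{\sigma\tau}=\bar f_\tau^{\sigma}\circ\bar f_\sigma$ for any choice. This is exactly the same non-abelian obstruction that prevents one from descending $X$ itself, so assuming it can be resolved for $X/H$ begs the question. By contrast, the quotient $X/G$ \emph{does} descend, because there the ambiguity dies: the induced map $h_\sigma\colon X/G\to (X/G)^\sigma$ is \emph{unique} (any two lifts of it differ by an element of $G^\sigma$ acting trivially on $(X/G)^\sigma$), and uniqueness forces the Weil cocycle condition. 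That is the paper's Claim~3.

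The repair does not require descending $X/H$ at all. One normalizes the three branch points of $\pi_2\colon X/H\to X/G$ to $\{0,1,\infty\}$, notes that each $h_\sigma$ permutes this set (it carries the branch locus of $\pi_2$ to that of $\pi_2^\sigma$, which is $\sigma(\{0,1,\infty\})=\{0,1,\infty\}$), and then checks directly that the divisor $D=R(0)+R(1)+R(\infty)$ on $B$ satisfies $D^\sigma=D$ using $R=R^\sigma\circ h_\sigma$. This yields the $K$-rational degree-$3$ divisor without ever constructing a $K$-form of $X/H$ or of its $\bar G$-action. Your closing remarks on the cyclic case (two branch points, even divisor) and on the extra modular subgroups of ${\rm PGL}_2$ in positive characteristic are accurate; the paper handles $p>0$ by deferring to the argument of \cite{HidQui15}.
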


\begin{proof}
We will prove the theorem for the case $p=0$, since the proof for the  case $p\neq 0$ is the same as in \cite[Theorem 1.2]{HidQui15}.
Without loss of generality we can assume $K=M_{F\slash K}(X)$ (see \cite[Proposition 2.1]{DebEms99}). 
Given  $\sigma\in {\rm Aut}(F\slash K)$, there exists an isomorphism $f_{\sigma}:X\longrightarrow X^{\sigma}$. 
We start observing the following, where $G={\rm Aut}(X)$: \\

\noindent \emph{Claim 1.} $f_{\sigma}Z(G)f_{\sigma}^{-1}=Z(G)^{\sigma}.$ 
 For every $a\in Z(G)$ and $b\in G^{\sigma}$ we have $$(f_{\sigma}af_{\sigma}^{-1})b=f_{\sigma}a(\underbrace{f_{\sigma}^{-1}bf_{\sigma}}_{b'\in G})f_{\sigma}^{-1}=f_{\sigma}ab'f_{\sigma}^{-1}\underbrace{=}_{a\in Z(G)}f_{\sigma}b'af_{\sigma}^{-1}=b(f_{\sigma}af_{\sigma}^{-1}).$$ 
 This says that $f_{\sigma}Z(G)f_{\sigma}^{-1}\subseteq Z(G^{\sigma})$, which is equal to $Z(G)^{\sigma}$. The other inclusion is obtained analogously. 
 By  Claim 1 and the fact that $f_{\sigma}Gf_{\sigma}^{-1}=G^{\sigma}$, there exist two isomorphisms $g_{\sigma}$ and $h_{\sigma}$ such that the following diagram commutes:

 \[
\xymatrix{
X\ar[r]^{f_{\sigma}}\ar[d]_{\pi_1} & X^{\sigma}\ar[d]^{\pi_1^{\sigma}}\\
X/Z(G)\ar[r]^{g_{\sigma}}\ar[d]_{\pi_2} & (X/Z(G))^{\sigma}\ar[d]^{\pi_2^{\sigma}} \\
X/G\ar[r]^{h_{\sigma}} & (X/G)^{\sigma}
}
\]

\noindent \emph{Claim 2.} Without loss of generality, we can assume that the branch locus of $\pi_2$ in $X/G\cong \pp^1_F$ is $\mathcal{B}=\{0,1,\infty\}$.

\medskip

\noindent 
The covering group of $\pi_2$ is isomorphic to $ G\slash Z(G)$ and it is a finite group acting on the projective line. By our hypothesis, $G\slash Z(G)$ is not a cyclic group, so it must be isomorphic to either $D_{n}, A_4, A_5$ or $S_4$  \cite{MaVa80}. In any of these four cases the branch locus of $\pi_2$ consists of $3$ points, which can be taken to be $0,1,\infty$ up to a projectivity. \\

\noindent \emph{Claim 3.} There exists an isomorphism $R:X\slash G\longrightarrow B$ to the canonical $K$-model $B$ of $X\slash G$ such that $R=R^{\sigma}\circ h_{\sigma}$.

\medskip

We will prove that given $\sigma\in {\rm Aut}(F/K)$ there exists a unique isomorphism $h_{\sigma}$ as in the diagram. 
This implies that $\{h_{\sigma}\}_{\sigma\in{\rm Aut}(F\slash K)}$ satisfies Weil's Theorem \ref{T1} and gives the statement.

 Let $S:=\pi_2\circ \pi_1$. Assume that there exist other isomorphisms $f_{\sigma}'$ and $h_{\sigma}'$ such that $h_{\sigma}'\circ S=S^{\sigma}\circ f_{\sigma}'$. Then $f_{\sigma}^{-1}\circ f_{\sigma}'=g\in G$ so  we have 
$$h_{\sigma}'\circ S=S^{\sigma}\circ f_{\sigma}'=S^{\sigma}\circ f_{\sigma}\circ g=S^{\sigma}\circ g'\circ f_{\sigma}=S^{\sigma}\circ f_{\sigma}=h_{\sigma}\circ S,$$ 
where in the third equality $g':=f_{\sigma}\circ g\circ f_{\sigma}^{-1}\in G^{\sigma}$  and the fourth equality follows from the fact that $S^{\sigma}$ is the quotient by $G^{\sigma}$. Thus $h_{\sigma}'=h_{\sigma}$, proving that $h_{\sigma}$ is uniquely determined by $\sigma$. 
By Weil's Theorem \ref{T1} we obtain that there exists an isomorphism $R:X\slash G\longrightarrow B$ such that $B$ is a curve of genus $0$ defined over $K$ and we have the following commutative diagram $(*)$:

\[
\xymatrix{
X\ar[rr]^{f_{\sigma}}\ar[d]_S & & X^{\sigma}\ar[d]^{S^{\sigma}}\\
X/G\ar[rr]^{h_{\sigma}} \ar[rd]_R& & (X/G)^{\sigma}\ar[ld]^{R^{\sigma}} \\
& B & 
}
\]

 Since $g_{\sigma}$ and $h_{\sigma}$ are isomorphisms, we have that $h_{\sigma}(\mathcal{B})=\sigma(\mathcal{B})=\mathcal B\ (**)$. \\

\noindent \emph{Claim 4.} $B$ has a $K$-rational point $r$ outside the branch locus of $R\circ S$.\\

\noindent The branch divisor $D=R(0)+R(1)+R(\infty)$ satisfies 
$$D^{\sigma}=\sigma(R(0))+\sigma(R(1))+\sigma(R(\infty))=R^{\sigma}(\sigma(0))+R^{\sigma}(\sigma(1))+R^{\sigma}(\sigma(\infty))$$ 
$$=R^{\sigma}(0)+R^{\sigma}(1)+R^{\sigma}(\infty)\overset{(*)}{=}R\circ h_{\sigma}^{-1}(0)+R\circ h_{\sigma}^{-1}(1)+R\circ h_{\sigma}^{-1}(\infty)\overset{(**)}{=}D,$$ so $B$ has a $K$-rational divisor of degree $3$. By  \cite[Lemma 5.1]{Hugg04} this implies that $B$ has a $K$-rational point, and thus $B$ is isomorphic to $\pp^1_K$ over $K$. In particular, $B$ has a $K$-rational point $r$ outside the branch locus of $R\circ S$. 

\medskip

 By D\`ebes-Emsalem theorem (Theorem \ref{DE}\,(b)) and \emph{Claim 4} we conclude that $K$ is a field of definition of $X$.
\end{proof}

A group over its center is cyclic if and only if the group is abelian, thus we obtain the following.

\begin{corollary}\label{cencor}
Let $K$ be a  field of characteristic $0$ and let $F$ be an algebraic closure of $K$. 
Let $X$ be a smooth projective algebraic curve of genus $g\ge 2$ defined over $F$ such that $X\slash Z({\rm Aut}(X))$ has genus $0$. 
If $X$ can not be defined over its field of moduli $M_{F/K}(X)$, then ${\rm Aut}(X)$ is abelian.
\end{corollary}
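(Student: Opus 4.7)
The plan is to prove the corollary by contraposition: assuming $\mathrm{Aut}(X)$ is non-abelian, I will show that $X$ is definable over $M_{F/K}(X)$. The strategy is to reduce everything to a direct application of Theorem \ref{cen} with $H = Z(\mathrm{Aut}(X))$, so the real content of the proof is just checking that its hypotheses are met.

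First I would verify the ambient assumptions. Since $K$ has characteristic zero, it contains $\mathbb{Q}$ and is therefore automatically infinite and perfect, and of characteristic $p = 0 \neq 2$. The remaining structural hypotheses of Theorem \ref{cen} (that $F$ is an algebraic closure of $K$, that $X$ is a smooth projective curve of genus $\geq 2$ defined over $F$, that $H$ is the center of $\mathrm{Aut}(X)$, and that $X/H$ has genus $0$) are precisely what is assumed in the corollary.

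The one nontrivial hypothesis to check is that $\mathrm{Aut}(X)/Z(\mathrm{Aut}(X))$ is neither trivial nor cyclic. For this I would invoke the classical group-theoretic lemma asserting that if $G/Z(G)$ is cyclic then $G$ is abelian (pick a lift $g$ of a generator; every element of $G$ has the form $g^i z$ with $z \in Z(G)$, and two such elements manifestly commute). Of course $G/Z(G)$ being trivial also forces $G$ to be abelian. So the assumption that $\mathrm{Aut}(X)$ is non-abelian forces $\mathrm{Aut}(X)/Z(\mathrm{Aut}(X))$ to be neither trivial nor cyclic, which is exactly the hypothesis of Theorem \ref{cen} in characteristic $0$.

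Applying Theorem \ref{cen} now yields that $X$ is definable over $M_{F/K}(X)$, contradicting the assumption of the corollary and completing the proof. I do not anticipate any genuine obstacle: the corollary is essentially a repackaging of Theorem \ref{cen} via the centre-quotient criterion for abelianness. The only point to handle with care is to make sure the characteristic-$0$ branch of Theorem \ref{cen} (\emph{neither trivial nor cyclic}, without any coprimality restriction) matches cleanly with the standard group-theoretic lemma, which it does.
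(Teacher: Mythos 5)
Your proposal is correct and follows exactly the route the paper takes: the authors justify the corollary with the single remark that a group is abelian if and only if its quotient by its center is cyclic (or trivial), and then apply Theorem \ref{cen}. Your write-up merely spells out the hypothesis-checking (characteristic $0$ implies infinite and perfect, etc.) in more detail than the paper does.
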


\begin{example}\label{cu}
Consider the plane quartic $X$ defined by $x^4+y^4+z^4+ax^2y^2+bxyz^2=0$, where 
 $ab\not=0$. By \cite[Theorem 29]{Bars05} $X$ has automorphism group isomorphic to $D_4$ and generated by
$$f: [x:y:z]\mapsto [y:x:z],\ g:[x:y:z]\mapsto [ix:-iy:z].$$ 
The map $g^2$ has order $2$ and generates the center of ${\rm Aut}(X)$.
The group $\langle f\rangle$ also has order $2$, and the quotients 
$X\slash\langle f\rangle$ and $X\slash\langle g^2\rangle$ both 
have signature $(1;[2,2,2,2])$. 
Thus $Z({\rm Aut}(X))$ is not unique up to conjugation in this case.
\end{example}

\subsection{Odd signature curves}

 We present here another easy criterion for the definability of a curve $X$ over its field of moduli in terms of 
 the signature of the covering $\pi:X\longrightarrow X\slash {\rm Aut}(X)$ provided that $X\slash {\rm Aut}(X)$ has genus $0$.

\begin{definition} 
A smooth projective curve $X$ of genus $g\geq 2$ has {\em odd signature} if the signature of the covering $\pi: X \to X/{\rm Aut}(X)$ 
is of the form $(0; c_1,\dots,c_r)$ where some $c_i$ appears exactly an odd number of times.
\end{definition}

The following result can be proved by means of D\`ebes-Emsalem theorem and applying  \cite[Lemma 5.1]{Hugg04} to the branch locus of  the projection $\pi:X\longrightarrow X\slash {\rm Aut}(X)$.  In case the field is finite, the result follows from \cite[Corollary 2.11]{Hugg04}.

\begin{theorem} \cite[Theorem 2.5]{ArtQui12} \label{AQ} Let $X$ be a smooth projective curve of genus $g\ge 2$ 
defined over an algebraically closed field $F$, and let $F\slash K$ be a Galois extension. If $H$ is a subgroup of ${\rm Aut}(X)$ unique up to conjugation and $\pi_N:X\longrightarrow X\slash N_{{\rm Aut}(X)}(H)$ is an odd signature cover, then $M_{F\slash K}(X)$ is a field of definition for $X$.
\end{theorem}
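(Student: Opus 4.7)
My plan is to adapt the descent argument from Theorem \ref{cen}, working with the normalizer $N:=N_{{\rm Aut}(X)}(H)$ in place of the center, and to extract the $K$-rational point required to invoke D\`ebes-Emsalem from the odd signature of $\pi_N$ via Huggins' odd-degree divisor lemma \cite[Lemma 5.1]{Hugg04}.

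As usual, I would first reduce to the case $K=M_{F/K}(X)$ (by \cite[Proposition 2.1]{DebEms99}) and, for each $\sigma\in{\rm Aut}(F/K)$, fix an isomorphism $f_\sigma:X\to X^\sigma$. Because $H$ is unique up to conjugation, the subgroup $f_\sigma H f_\sigma^{-1}\subseteq {\rm Aut}(X^\sigma)$---which by construction gives the same quotient signature as $H^\sigma$---must be conjugate to $H^\sigma$ in ${\rm Aut}(X^\sigma)$. Modifying $f_\sigma$ by an inner automorphism of ${\rm Aut}(X^\sigma)$, I may therefore assume $f_\sigma H f_\sigma^{-1}=H^\sigma$, and hence $f_\sigma N f_\sigma^{-1}=N^\sigma$ as well. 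Following the pattern of Claims 2--3 in the proof of Theorem \ref{cen}, $f_\sigma$ then descends to an isomorphism $h_\sigma: X/N\to(X/N)^\sigma$ that is uniquely determined by $\sigma$ (any other choice would differ by an element of $N^\sigma$, which acts trivially on $(X/N)^\sigma$). Consequently $\{h_\sigma\}$ satisfies the Weil cocycle condition of Theorem \ref{T1}, producing a genus-zero curve $B$ over $K$ together with an isomorphism $R:X/N\to B$ over $F$ such that $R=R^\sigma\circ h_\sigma$; this $B$ is the canonical $K$-model of $X/N$.

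The key new input is the odd signature of $\pi_N$. Writing it as $(0;c_1,\dots,c_r)$, I would pick a value $c$ occurring an odd number of times, say $s$; the corresponding sub-divisor of the branch locus of $R\circ\pi_N$ on $B$---consisting of the $s$ branch points of ramification index $c$---is preserved by every $h_\sigma$, since ramification data are intrinsic to the cover, and therefore descends to a $K$-rational effective divisor of odd degree $s$ on $B$. Then \cite[Lemma 5.1]{Hugg04} forces $B(K)\neq\emptyset$, whence $B\cong_K\mathbb{P}^1_K$, and in particular $B$ has infinitely many $K$-rational points outside the branch locus of $R\circ\pi_N$.

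To conclude, I would invoke D\`ebes-Emsalem's theorem applied to the cover $\pi_N$---in the form valid for Galois-stable subgroups such as $N$, which is the framework underlying \cite[Theorem 1.2]{HidQui15}---to deduce from the existence of such a $K$-rational point that $K$ is a field of definition of $X$. The main obstacle in this plan is verifying the Weil cocycle condition for $\{h_\sigma\}$, which is precisely where the unique-up-to-conjugation hypothesis on $H$ is indispensable: without it one cannot normalize each $f_\sigma$ so that $H$ is carried to $H^\sigma$, and the descent data would not assemble into a genuine cocycle.
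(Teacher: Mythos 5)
Your proposal is correct and follows essentially the same route the paper indicates: the paper only sketches this result (citing \cite[Theorem 2.5]{ArtQui12}), saying it follows from D\`ebes--Emsalem together with \cite[Lemma 5.1]{Hugg04} applied to the branch locus, which is precisely your argument. Your elaboration of the Weil descent for $X/N$ (normalizing $f_\sigma$ via the unique-up-to-conjugation hypothesis so that $h_\sigma$ is unique, then extracting an odd-degree $K$-rational divisor from the branch points of a fixed ramification index) matches the pattern of the paper's proof of Theorem \ref{cen} and fills in the details faithfully.
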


The previous theorem immediately implies that $X$ can be defined over its field of moduli anytime 
the branch locus of $\pi:X\to X/{\rm Aut}(X)$ has odd cardinality. In \cite[Corollary 3.5]{ArtQui12} it has been applied 
to show that non-normal $p$-gonal curves in characteristic zero, where $p$ is prime, can always be defined over their field of moduli.

\section{Pseudoreal Riemann surfaces and NEC groups}
In this section we will concentrate on complex curves, or equivalently (embedded) compact Riemann surfaces.

\subsection{Pseudoreal Riemann surfaces} Since the concepts of field of moduli and field of definition of a curve only depend on its 
isomorphism class, then we can extend the definition to compact Riemann surfaces after choosing any embedding of them 
in a projective space. We recall that an {\em antiholomorphic (or anticonformal)} morphism between two Riemann 
surfaces $X,Y$ is a continuous map $X\to Y$ whose transition maps composed with the complex conjugation $J$ are holomorphic.

\begin{proposition} A compact Riemann surface $X$ has field of moduli 
$M_{\mathbb{C}\slash\mathbb{R}}(X)=\mathbb{R}$ if and only if it has an anticonformal automorphism, 
and it has field of definition $\mathbb{R}$ if and only if it has an anticonformal involution.
\end{proposition}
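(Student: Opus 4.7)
The plan is to leverage the fact that $\mathrm{Aut}(\mathbb{C}/\mathbb{R}) = \{1,\sigma\}$ has only two elements (where $\sigma$ denotes complex conjugation), and then to identify the algebraic conjugate $X^\sigma$ with the conjugate Riemann surface $\overline X$.

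For the field-of-moduli equivalence, I would first note that the subgroup $F_\mathbb{R}(X)$ of Definition \ref{fieldmoduli} must be either $\{1\}$ or all of $\mathrm{Aut}(\mathbb{C}/\mathbb{R})$, with fixed field $\mathbb{C}$ or $\mathbb{R}$ respectively. Hence $M_{\mathbb{C}/\mathbb{R}}(X) = \mathbb{R}$ is equivalent to $\sigma \in F_\mathbb{R}(X)$, i.e., to the existence of a biholomorphism $X \cong_\mathbb{C} X^\sigma$. The next step is to argue that $X^\sigma$ coincides with the conjugate Riemann surface $\overline X$ (same underlying smooth manifold, conjugate complex structure), which is immediate from the definition of $X^\sigma$ by conjugating the coefficients of a projective model. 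The standard dictionary then says that holomorphic maps $X \to \overline X$ correspond bijectively to antiholomorphic self-maps $X \to X$: precompose with the tautological antiholomorphic bijection $X \to \overline X$. This yields the first equivalence.

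For the field-of-definition equivalence, one direction is essentially immediate: if $X$ is $\mathbb{C}$-isomorphic to a curve $Y \subset \mathbb{P}^m_\mathbb{C}$ cut out by real polynomials, then componentwise complex conjugation on $\mathbb{P}^m(\mathbb{C})$ preserves $Y$ setwise and restricts to an anticonformal involution of $Y$, which transfers back to $X$ through the isomorphism. For the converse I would invoke Weil's Theorem \ref{T1} with $F=\mathbb{C}$, $K=\mathbb{R}$: given an anticonformal involution $\tau$ of $X$, set $f_1 = \mathrm{id}_X$ and let $f_\sigma : X \to X^\sigma$ be the biholomorphism corresponding to $\tau$ under the dictionary from the first part. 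The only nontrivial instance of the cocycle condition reduces to $f_\sigma^\sigma \circ f_\sigma = f_1 = \mathrm{id}_X$, and unwinding the identification $X^\sigma = \overline X$ this translates exactly into $\tau \circ \tau = \mathrm{id}_X$, which holds by hypothesis. Weil's theorem then produces a model of $X$ defined over $\mathbb{R}$.

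The main piece of bookkeeping will be setting up the translation between anticonformal self-maps $\tau:X\to X$ and holomorphic isomorphisms $f_\sigma : X \to X^\sigma$ carefully enough that the involutivity $\tau^2=\mathrm{id}_X$ is visibly equivalent to the Weil cocycle identity $f_\sigma^\sigma \circ f_\sigma = \mathrm{id}_X$. Once this identification is in place, everything else is just an unpacking of the definitions of field of moduli and of field of definition.
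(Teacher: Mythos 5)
Your proposal is correct and follows essentially the same route as the paper: identify $X^\sigma$ with $\overline X$, use composition with the coordinatewise conjugation $J$ to translate between isomorphisms $X\to\overline X$ and anticonformal automorphisms of $X$, and in the field-of-definition direction apply Weil's Theorem \ref{T1} to $\{f_e=\mathrm{id},\ f_c=J|_X\circ\tau\}$, where the cocycle condition reduces precisely to $\tau^2=\mathrm{id}_X$. No substantive differences.
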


\begin{proof} We will identify $X$ with a smooth complex projective curve, after choosing an embedding of it in a projective space $\pp^n_\cc$. 
Note that $M_{\mathbb{C}\slash\mathbb{R}}(X)=\mathbb{R}$ if and only if $X\cong_{\mathbb{C}}\overline{X}$, where we denote by 
$\overline{X}$ the curve $X^{c}$ obtained applying the complex conjugation to the coefficients of the defining polynomials of $X$.
If  $f:\overline{X}\longrightarrow X$ is an isomorphism and $J([x_0:\dots:x_n])=[\bar x_0:\dots:\bar x_n]$, then $f\circ J|_X:X\longrightarrow X$ 
is an anticonformal automorphism. Conversely, if $g$ is 
an anticonformal automorphism of $X$,  then $J\circ g$ is an isomorphism  between $X$ and $\overline{X}$. This proves the first statement.

If $X$ has field of definition $\mathbb{R}$, then we can assume without loss of generality that $X=\overline{X}$, so that
the map $J|_X$ is an anticonformal involution of $X$. 
Conversely assume that $X$ has an anticonformal involution $\tau$. 
If ${\rm Aut}(\mathbb{C}\slash\mathbb{R})=\{e,c\}$, let $f_e:X\longrightarrow X^{e}=X$ be the identity and $f_{c}:=J|_X\circ \tau:X\to \overline X=X^c$. 
We have $$f_{c}=(f_{c})^{e}\circ f_e,\ \ \ \ \ \ f_{c}=(f_e)^{c}\circ f_{c},\ \ \ \ \ \ f_e=(f_e)^{e}\circ f_e,$$ and we see that $$(f_{c})^{c}\circ f_{c}=(J|_X\circ \tau)^{c}\circ (J|_X\circ \tau)=((J|_X)^{c}\circ \tau^{c}\circ J|_X)\circ \tau=\tau\circ \tau={\rm Id}_X=f_e,$$ 
so the collection $\{f_e, f_{c}\}$ satisfies the condition of Weil's theorem. Hence $X$ can be defined over $\mathbb{R}$. \end{proof}

\begin{corollary} A Riemann surface is pseudoreal if and only if it has antiholomorphic automorphisms but no antiholomorphic involution.
\end{corollary}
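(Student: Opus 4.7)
The plan is to deduce this corollary directly from the preceding proposition together with Definition \ref{pseudocurve}. The proposition establishes two separate equivalences for a compact Riemann surface $X$ (viewed as a smooth complex projective curve via some embedding): first, that $M_{\cc/\rr}(X)=\rr$ if and only if $X$ carries an anticonformal automorphism; second, that $\rr$ is a field of definition of $X$ if and only if $X$ carries an anticonformal involution. Definition \ref{pseudocurve} says that $X$ is pseudoreal precisely when the first condition holds while the second fails.

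So the proof is a purely logical combination of these facts. I would write: ``$X$ is pseudoreal'' $\iff$ $M_{\cc/\rr}(X)=\rr$ and $\rr$ is not a field of definition of $X$ $\iff$ $X$ has an anticonformal automorphism and $X$ has no anticonformal involution. Each equivalence is justified by citing the corresponding clause of the previous proposition.

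There is no real obstacle here, as both directions require only the two equivalences already established. The only minor point to note is that the definitions of field of moduli and field of definition were given for projective curves, but we have chosen an embedding of $X$ in some $\pp^n_\cc$, which is the same convention used in the proof of the proposition. Since an (anti)conformal automorphism of $X$ as a Riemann surface corresponds, under this embedding, to an (anti)holomorphic automorphism of the complex projective curve, the equivalences transfer directly. The corollary then follows in two lines, essentially as a restatement of the proposition under the definition of pseudoreal.
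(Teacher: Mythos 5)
Your proof is correct and matches the paper's approach: the paper states this corollary without further argument as an immediate consequence of the preceding proposition combined with Definition \ref{pseudocurve}, which is exactly the two-line logical combination you give. Your remark about the choice of embedding being consistent with the proposition's proof is a reasonable (if unnecessary) precaution.
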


Observe that with this characterization it is clear that any pseudoreal Riemann surface has non-trivial 
automorphism group.  In what follows we will denote by ${\rm Aut}^{\pm}(X)$ the {\em full automorphism group} 
of a Riemann surface, i.e. the group containing both automorphisms and anti-holomorphic automorphisms of $X$.
To study such group we recall the language of Fuchsian and NEC groups.

\subsection{NEC groups}

\begin{definition} A \emph{NEC group} is a discrete subgroup $\Gamma$ of ${\rm PGL}(2,\mathbb{R})$, the full automorphism group  of $\mathbb{H}=\{z\in\mathbb{C}:\ {\rm Im}(z)>0\}$, such that $\mathbb{H}\slash\Gamma$ is a compact surface. A \emph{Fuchsian group} is a NEC group contained in ${\rm PSL}(2,\mathbb{R})$, the group of conformal automorphisms of $\mathbb{H}$. If $\Gamma$ is a NEC group which is not a Fuchsian group, it is called a \emph{proper NEC group} and the group $\Gamma^+:=\Gamma\cap {\rm PSL}(2,\mathbb{R})$ is called the \emph{canonical Fuchsian subgroup of $\Gamma$}.
\end{definition}

 Every NEC group $\Gamma$ admits a vector, called the \emph{signature of $\Gamma$}, given by 
\begin{equation}\label{ssg} s(\Gamma)=(g;\pm;[m_1,\ldots, m_r],\{C_1,\ldots, C_k\}),\end{equation} where $g$ is the genus of the quotient space $\mathbb{H}\slash\Gamma$; the sign $+$ or $-$ depends on the orientability or non-orientability of the quotient space, respectively; the integers $m_i\ge 2$ are the ramification indices of the $r$ branch points of the quotient $\pi:\mathbb{H}\longrightarrow\mathbb{H}\slash\Gamma$; and the $C_i$ are $s_i$-uples of integers $$C_i=(n_{i1},\ldots, n_{is_i}),$$ such that $n_{ij}\ge 2$, where that values represents the ramification index of the quotient $\mathbb{H}\slash\Gamma$ in its $i$-th boundary component (if it has no border, we consider no $C_i$ in the signature).
 Every NEC group with signature (\ref{ssg}) (for $+$ sign) has a presentation as a group given by the generators $$x_1,\ldots, x_r\ \ \ \ a_1,b_1,\ldots, a_g,b_g\ \ \ \ e_1,\ldots, e_k\ \ \ \ c_{10},\ldots, c_{1s_1},\ldots, c_{k0},\ldots, c_{ks_k}$$ which are called \emph{elliptic, hyperbolic, boundary and reflection elements}, respectively, that satisfy the relations $$x_i^{m_i}=1\ \ \ \forall i\in\{1,\ldots ,r\}$$ 
 $$c_{ij-1}^2=c_{ij}^2=(c_{ij-1}c_{ij})^{n_{ij}}=1,\ \ c_{is_i}=e_i^{-1}c_{i0}e_i\ \ \ \forall i\in\{1,\ldots, k\}\ ,\ \ \forall j\in\{0,\ldots, s_i\}$$ 
 $$x_1\ldots x_re_1\ldots e_k[a_1,b_1]\ldots [a_g,b_g]=1,$$ 
 where $[a_i,b_i]=a_ib_ia_i^{-1}b_i^{-1}$. In case the signature has a $-$ sign, the relations are the same except the last one which is replaced by 
 $$x_1\ldots x_re_1\ldots e_kd_1^2\ldots d_g^2=1,$$ 
 where the $d_i$ are $g$ \emph{glide reflections}, which are antiholomorphic 
 elements of infinite order of the NEC group (for details see \cite[Ch. 0]{BK}).

 By the Riemann surfaces uniformization theorem we know that every compact Riemann surface of genus greater than $1$ is biholomorphic to $\mathbb{H}\slash\Delta$, where $\Delta$ is a Fuchsian group without torsion. 
 It follows with this presentation that 
 $${\rm Aut}(X)\cong N_{{\rm PSL}(2,\mathbb{R})}(\Delta)\slash\Delta,\quad {\rm Aut}^{\pm}(X)\cong N_{{\rm PGL}(2,\mathbb{R})}(\Delta)\slash\Delta,$$
 where $N_{-}(\Delta)$ denotes the normalizer of $\Delta$ in both cases. In fact, any group $H\le {\rm Aut}(X)$ is isomorphic to some quotient group $\Gamma\slash\Delta$, where $\Gamma$ is a Fuchsian group that contains $\Delta$ as a normal subgroup of finite index. 
 In a similar way, a subgroup $H\le {\rm Aut}^{\pm}(X)$ is isomorphic to a quotient $\Gamma\slash\Delta$, where $\Gamma$ is a NEC group (see \cite{BK}).
 
With the previous notation, studying Riemann surfaces where an abstract group $H$ acts as full automorphism group with a certain signature is equivalent to 
study the possible epimorphisms $\theta:\Gamma\rightarrow H$ such that $\ker(\theta)$ is a torsion free group contained in $\Gamma^+$. 
 Observe that $\ker(\theta)$ is a torsion free Fuchsian group if and only if $\theta$ preserves 
the orders of the finite order elements of $\Gamma$ (see \cite[p.~12]{BS}).

The images by $\theta$ of the generators of $\Gamma$ form a vector with entries in $H$ 
which is called {\em generating vector} of $\theta$.

The following theorem  relates the signature of a NEC group with the signature of its canonical Fuchsian subgroup \cite[Theorem 2]{Sin74}.

\begin{theorem} \label{5.1} Let $X\slash \Delta$ be a Riemann surface (considered as a Klein surface) and denote by $\Gamma$ the NEC group $N_{{\rm PGL}(2,\mathbb{R})}(\Delta)$ which corresponds to its full automorphism group. If $\Gamma$ is a proper NEC group which has no reflections, then its signature has the form $$(g;-;[m_1,\ldots ,m_r]),$$ and the signature of his canonical Fuchsian subgroup $\Gamma^{+}$ will be of the form 
$$(g-1;+;[m_1,m_1,\ldots ,m_r,m_r]),$$ 
where every $m_i$ appears two times.
\end{theorem}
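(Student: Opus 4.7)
The plan is to reason in two stages: first pin down the signature of $\Gamma$, then deduce that of $\Gamma^+$ via the orientation double cover $\mathbb H/\Gamma^+\to \mathbb H/\Gamma$.

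First, I would determine the signature of $\Gamma$. Since $\Gamma$ is a proper NEC group, it contains an orientation-reversing element. The orientation-reversing elements of any NEC group split into two classes: reflections, which fix a hyperbolic line and are responsible for boundary components of $\mathbb H/\Gamma$, and glide reflections, which act freely on $\mathbb H$. By hypothesis there are no reflections, so every orientation-reversing element is a glide reflection and $\mathbb H/\Gamma$ has empty boundary (no period cycles $C_i$). A glide reflection forces the quotient to be non-orientable, so the sign must be $-$. Hence the signature takes the form $(g;-;[m_1,\ldots,m_r])$, where the $m_i$ record the orders of the elliptic generators $x_i$.

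Next, I would analyze the canonical Fuchsian subgroup $\Gamma^+$, which is Fuchsian of index $2$ in $\Gamma$ and whose signature is therefore of the form $(h;+;[n_1,\ldots,n_s])$. The key observation is that every elliptic element $x_i$ is orientation-preserving, so $\langle x_i\rangle\subset \Gamma^+$, and the stabilizer in $\Gamma^+$ of an elliptic fixed point $\tilde P_i$ coincides with its stabilizer in $\Gamma$. For the preimage count, I would show that if $\sigma\in \Gamma\setminus \Gamma^+$ then $\sigma$ cannot fix $\tilde P_i$ (because orientation-reversing elements with fixed points are reflections, and there are none); moreover if some $g\in\Gamma^+$ satisfied $g\tilde P_i=\sigma\tilde P_i$ then $\sigma^{-1}g$ would lie in $\mathrm{stab}_\Gamma(\tilde P_i)\subset \Gamma^+$, forcing $\sigma\in\Gamma^+$, a contradiction. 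Hence each cone point of order $m_i$ in $\mathbb H/\Gamma$ lifts to exactly two cone points of order $m_i$ in $\mathbb H/\Gamma^+$, giving the period list $[m_1,m_1,\ldots,m_r,m_r]$.

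Finally, I would compute the genus $h$ from the Riemann–Hurwitz relation $\mu(\Gamma^+)=2\mu(\Gamma)$ using the hyperbolic area formulas
\[
\mu(\Gamma)=2\pi\Bigl(g-2+\sum_{i=1}^{r}(1-1/m_i)\Bigr),\qquad \mu(\Gamma^+)=2\pi\Bigl(2h-2+\sum_{j}(1-1/n_j)\Bigr).
\]
With $\sum_j(1-1/n_j)=2\sum_i(1-1/m_i)$ from the period computation above, this simplifies to $2h-2=2g-4$, yielding $h=g-1$. (The same conclusion follows intrinsically from the fact that the orientation double cover of a closed non-orientable surface of genus $g$ is a closed orientable surface of genus $g-1$.)

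The only slightly delicate point is the lifting argument in the second paragraph — one must rule out that an orientation-reversing element fixes an elliptic point, and that two distinct cosets of $\Gamma^+$ send $\tilde P_i$ to the same $\Gamma^+$-orbit; both reductions use crucially the absence of reflections. Everything else is a routine application of the area/Euler-characteristic formula for NEC groups.
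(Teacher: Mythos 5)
Your proof is correct. Note that the paper does not actually prove this statement: it is quoted verbatim from Singerman (\emph{On the structure of non-Euclidean crystallographic groups}, cited as \cite{Sin74}), so there is no in-paper argument to compare against. Your reconstruction is the standard covering-space proof and all the delicate points are handled properly: the absence of reflections kills both the boundary (hence no period cycles) and any fixed points of orientation-reversing elements (an orientation-reversing isometry of $\mathbb H$ with a fixed point is necessarily an involution fixing a geodesic, i.e.\ a reflection); this gives that point stabilizers lie in $\Gamma^+$, that each $\Gamma$-orbit of elliptic fixed points splits into exactly $[\Gamma:\Gamma^+\cdot\mathrm{Stab}_\Gamma(\tilde P_i)]=2$ orbits of $\Gamma^+$ with the same stabilizer order, and hence the doubled period list; and the genus then drops out of $\mu(\Gamma^+)=2\mu(\Gamma)$ with the correct area formulas ($\alpha=1$ for sign $-$, $\alpha=2$ for sign $+$). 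The one cosmetic remark: it would be worth one sentence to observe that $\Gamma^+$ has no further periods beyond these, i.e.\ every elliptic element of $\Gamma^+$ is elliptic in $\Gamma$, which is immediate since $\Gamma^+\subset\Gamma$; you implicitly use this when you write the signature of $\Gamma^+$ as exactly $[m_1,m_1,\ldots,m_r,m_r]$.
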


\subsection{Finite-extendability of Fuchsian and NEC signatures}

We now recall the concept of finitely maximal Fuchsian signature, since it will be important to decide whether a given group is the complete automorphism 
group of a Riemann surface or not. In what follows we will denote by $s(G)$ the signature of a NEC group $G$ and by $d(s(G))=6g-6+2r$ the real dimension of the Teichm\"uller space of the signature $s(G)=(g;[m_1,\ldots ,m_r])$ (see \cite[p.~19]{Sin74b}).

\begin{definition} A Fuchsian group $\Delta$ is \emph{finitely maximal} if it is not contained properly in another Fuchsian group with finite index. 
 The signature $(g;[m_1,\ldots ,m_r])$ of a Fuchsian group $\Delta$ (a Fuchsian signature) is \emph{finitely maximal} if $d(s(\Gamma))\neq d(s(\Delta))$ for every Fuchsian group $\Gamma$ containing $\Delta$ as a proper subgroup.
\end{definition}

 Almost all Fuchsian signatures are finitely maximal, and those which are not finitely maximal were identified 
by L. Greenberg in  \cite{Gre63} and D. Singerman  in \cite{Sin72}. 
They determined there the so called \emph{Singerman list} (see \cite[p.~19]{BK}), which contains the only $19$ non finitely maximal Fuchsian signatures.

Considering Theorem \ref{5.1} we are interested in some particular signatures from Singerman list.

\begin{table}[!h]
\centering\renewcommand{\arraystretch}{1.1}\setlength{\tabcolsep}{10pt}
\caption{Even signatures from Singerman list}
\label{Sing}
    \begin{tabular}{| c | c | c |}
    \hline
    $\sigma_1$ & $\sigma_2$ & $[\sigma_2:\sigma_1]$ \\ \hline
    $(2;[-])$ & $(0;[2,2,2,2,2,2])$ & $2$ \\ \hline
    $(1;[t,t])$ & $(0;[2,2,2,2,t])$ & $2$ \\ \hline
    $(0;[t,t,t,t])\ t\ge 3$ & $(0;[2,2,2,t])$ & $4$ \\ \hline
    $(0;[t_1,t_1,t_2,t_2])\ t_1+t_2\ge 5$ & $(0;[2,2,t_1,t_2])$ & $2$ \\ \hline
    \end{tabular}

\end{table}

  The analogous of Singerman list for NEC groups was developed and completed in \cite{Buj82} and \cite{EstIzq06}. 
  We will use these lists to find NEC signatures which correspond to full automorphism groups of pseudoreal Riemann surfaces.
 By  \cite[Remark 1.4.7]{BS}, the signature $s(\Gamma)$ of a proper NEC group is finitely maximal if the same holds for the signature 
 $s(\Gamma^+)$ of its canonical Fuchsian subgroup.
 
 In \cite[Section 4]{BaGo10}, the authors studied under which conditions a finite group $G$ with a given non finitely maximal NEC signature can act as the full automorphism group of a pseudoreal Riemann surface. The three NEC signatures they studied were $$(1;-;[k,l];\{-\}),\ \ \ \ \ \ (1;-;[k,k];\{-\}),\ \ \ \ \ \ (2;-;[k];\{-\}),$$ which are associated to the non finitely maximal Fuchsian \emph{even} signatures $$(0;[k,k,l,l]),\ \ \ (0;[k,k,k,k]),\ \ \ (1;[k,k]),$$
of the Singerman list (see Table \ref{Sing} and Theorem \ref{5.1}). 
We study the missing case of the non finitely maximal NEC signature $(3;-;[-];\{-\})$, which will be needed to complete the classification 
of possible automorphism groups for pseudoreal Riemann surfaces. In what follows we say that an automorphism group has an {\em essential} action 
on a Riemann surface if it contains anticonformal elements.

\begin{lemma} \label{BG4} Let $\Delta$ be a NEC group with signature $(3;-;[-];\{-\})$. 
There exists an epimorphism $\theta:\Delta\longrightarrow G$ onto a finite group $G$ defining an essential action  on a pseudoreal Riemann surface if and only if $G$ is a non-split extension of some of its subgroups $H$ of index $2$, $G$ is generated by three elements $d',d'',d'''$ such that $d',d'',d'''\not\in H$, $(d')^2(d'')^2(d''')^2=1$ and the map $$d'\mapsto (d')^{-1},\ \ \ d''\mapsto (d')^2(d'')^{-1}(d')^{-2},\ \ \ d'''\mapsto (d''')^{-1}$$ does not induce an automorphism of $G$. Furthermore, such a group $G$ is necessarily the full automorphism group of a pseudoreal Riemann surface on which it acts.
\end{lemma}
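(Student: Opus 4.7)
The NEC group $\Delta$ with signature $(3;-;[-];\{-\})$ admits the presentation $\langle d_1,d_2,d_3\mid d_1^2d_2^2d_3^2\rangle$, where each $d_i$ is a glide reflection of infinite order; in particular $\Delta$ is torsion-free, so any epimorphism $\theta:\Delta\to G$ onto a finite group automatically has torsion-free kernel, and $X=\mathbb{H}/\ker\theta$ is a compact Riemann surface on which $G$ acts. Setting $d'=\theta(d_1)$, $d''=\theta(d_2)$, $d'''=\theta(d_3)$ and $H=\theta(\Delta^+)$, one sees directly that $G=\langle d',d'',d'''\rangle$, that $(d')^2(d'')^2(d''')^2=1$, and that $\theta$ is essential precisely when $H$ has index two in $G$, in which case $d',d'',d'''\in G\setminus H$. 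Since the anti-conformal automorphisms of $X$ coming from $G$ are exactly the elements of $G\setminus H$, the absence of an anti-conformal involution inside $G$ is equivalent to the short exact sequence $1\to H\to G\to\zz/2\to 1$ being non-split, which accounts for the first two conditions in both directions.

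For the third condition, the plan is to invoke Theorem \ref{5.1} together with the Singerman pair $(2;[-])\hookrightarrow(0;[2,2,2,2,2,2])$ of Table \ref{Sing} to see that $(3;-;[-];\{-\})$ is not finitely maximal, and then read from the Bujalance--Est\'evez-Izquierdo list of NEC signature extensions that $\Delta$ sits as an index-two subgroup of a larger NEC group $\Gamma$ whose canonical Fuchsian subgroup has signature $(0;[2,2,2,2,2,2])$. Choosing a coset representative $t\in\Gamma\setminus\Delta$ (which is necessarily orientation-preserving, since the dimensions force $[\Gamma^+:\Delta^+]=2$) and expressing the generators $d_i$ of $\Delta$ as explicit words in the generators of $\Gamma$, I would compute the conjugation action $d_i\mapsto t d_i t^{-1}$ and show that, modulo inner automorphisms of $\Delta$, it coincides with the substitution $d_1\mapsto d_1^{-1}$, $d_2\mapsto d_1^2 d_2^{-1} d_1^{-2}$, $d_3\mapsto d_3^{-1}$ appearing in the statement. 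A Weil-type extension argument then yields that $\theta$ lifts to an epimorphism $\theta':\Gamma\to G'\supsetneq G$, producing a proper overgroup of $G$ acting on $X$, if and only if this substitution descends through $\theta$ to an automorphism of $G$. Hence the non-automorphism hypothesis simultaneously rules out every enlargement of $G$ inside ${\rm Aut}^{\pm}(X)$ of this form and forces $G$ to be the full automorphism group of $X$, closing the forward direction.

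The converse reassembles these ingredients: given $G$ with the three properties, define $\theta(d_i)$ to be the prescribed generator; the relation $(d')^2(d'')^2(d''')^2=1$ makes $\theta$ well defined, generation makes it surjective, and the torsion-freeness of $\Delta$ transfers to the kernel, so $X=\mathbb{H}/\ker\theta$ is a Riemann surface on which $G$ acts essentially. The non-splitness excludes anti-conformal involutions in $G\setminus H$, and the failure of the substitution to induce an automorphism prevents any lift of $\theta$ to the extended NEC group $\Gamma$. Therefore ${\rm Aut}^{\pm}(X)=G$ and $X$ is pseudoreal, delivering both the biconditional and the furthermore clause.

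The principal obstacle is the explicit calculation producing the exact substitution $d_1\mapsto d_1^{-1}$, $d_2\mapsto d_1^2 d_2^{-1} d_1^{-2}$, $d_3\mapsto d_3^{-1}$. Arriving at this formula requires committing to a specific embedding of $\Delta$'s generators as words in the generators of the extended NEC group $\Gamma$ and then executing the conjugation; the asymmetric middle term $d_1^2 d_2^{-1} d_1^{-2}$ emerges only after one rewrites the image of $d_2$ so that it lies back in $\Delta$, and this bookkeeping is where the real work lies. Once the substitution is pinned down, the remaining equivalences are essentially formal.
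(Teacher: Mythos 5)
Your overall strategy coincides with the paper's: identify $H=\theta(\Delta^+)$, translate the absence of anticonformal involutions into non-splitness of $1\to H\to G\to C_2\to 1$, embed $\Delta$ into its unique index-two NEC overgroup, and convert normality of $\ker\theta$ in that overgroup into the question of whether the stated substitution induces an automorphism of $G$. Your first paragraph and the converse direction are essentially the paper's argument. The explicit substitution you defer is obtained in the paper from Bujalance's Proposition 4.8, which exhibits $\Delta$ inside $\Delta'=\langle x_1,x_2,x_3,c_1,e_1\rangle$ of signature $(0;+;[2,2,2],\{(-)\})$ via $d_1=c_1x_1$, $d_2=x_1c_1x_1x_2$, $d_3=x_2x_1c_1x_1x_2x_3$, and then conjugates by $c_1$.

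There is, however, a genuine gap in your forward direction. You assert that the coset representative $t\in\Gamma\setminus\Delta$ is ``necessarily orientation-preserving, since the dimensions force $[\Gamma^+:\Delta^+]=2$.'' That inference is wrong: $\Gamma\setminus\Delta$ is the union of two $\Delta^+$-cosets, one orientation-preserving and one orientation-reversing, and the equality $[\Gamma^+:\Delta^+]=2$ is perfectly compatible with both. The paper deliberately conjugates by the boundary reflection $c_1$, which is an \emph{orientation-reversing involution}, and this choice is what makes the forward implication close: if the substitution did induce an automorphism of $G$, then $\ker\theta$ would be normal in $\Delta'$, so $\Delta'/\ker\theta$ would act on $X$ and contain the anticonformal involution $c_1\ker\theta$, contradicting pseudoreality. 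Note that the forward direction only assumes that the action is essential and $X$ is pseudoreal, not that $G$ is the full automorphism group; so with an orientation-preserving $t$ the lift $\theta'\colon\Gamma\to G'$ would merely enlarge the group acting on $X$, which contradicts nothing. As written, your argument therefore does not derive the non-automorphism condition from pseudoreality, and the biconditional is not established.
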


\begin{proof}
Suppose we have an epimorphism $\theta:\Delta\longrightarrow G$ 
onto a finite group $G$ defining an essential action on the pseudoreal Riemann surface 
$X=\mathbb{H}\slash \ker(\theta)$. 
Then $H:=\theta(\Delta^{+})$ is an index $2$ subgroup of $G$, 
because $G$ has anticonformal elements. 
The extension $H\le G$ is non-split since otherwise $G\backslash H$ would contain anticonformal involutions, 
contradicting the fact that $X$ is pseudoreal. 
We have $\Delta=\langle d_1,d_2,d_3:\ d_1^2d_2^2d_3^2=1\rangle$ 
where the $d_i$'s are glide reflections, 
so the anticonformal elements $d':=\theta(d_1), d'':=\theta(d_2)$ and $d''':=\theta(d_3)$ 
can not belong to $H$. 
To prove the statement we need to show that the map 
$$d'\mapsto (d')^{-1},\ \ \ d''\mapsto (d')^2(d'')^{-1}(d')^{-2},\ \ \ d'''\mapsto (d''')^{-1}$$ 
does not induce an automorphism of $G$. 
To see this, observe that by \cite[p.~529-530]{Buj82} there is a NEC group $\Delta'$ 
with the unique signature $(0;+;[2,2,2],\{(-)\})$ containing $\Delta$ as a subgroup of index $2$. 
By \cite[Proposition 4.8]{Buj82} we know that if 
$$\Delta'=\langle x_1,x_2,x_3,c_1,e_1: x_1x_2x_3e_1=1,\ e_1^{-1}c_1e_1c_1=1,\ x_1^2=x_2^2=x_3^2=1,\ c_1^2=1\rangle$$ 
then $\Delta$ can be written as 
$$\Delta=\langle d_1:=c_1x_1,\ d_2:=x_1c_1x_1x_2,\ d_3:=x_2x_1c_1x_1x_2x_3\rangle\le \Delta'.$$ 
If we conjugate every generator of $\Delta$ by $c_1$ we get 
$$c_1^{-1}d_1c_1=d_1^{-1},\ \ \ c_1^{-1}d_2c_1=d_1^2d_2^{-1}d_1^{-2},\ \ \ c_1^{-1}d_3c_1=d_3^{-1},$$ 
so $\ker(\theta)$ is a normal subgroup of $\Delta'$ if and only if the images of $d_1,d_2$ and $d_3$ 
through $\theta$ satisfy that the map 
$$d'\mapsto (d')^{-1},\ \ \ d''\mapsto (d')^2(d'')^{-1}(d')^{-2},\ \ \ d'''\mapsto (d''')^{-1}$$ 
induces an automorphism of $\Delta\slash\ker(\theta)=G$. 
So the assertion follows, since if $\ker(\theta)$ were a normal subgroup of $\Delta'$, then $\Delta'\slash\ker(\theta)\cong {\rm Aut}^{\pm}(X)$ and it 
would contain $c_1\ker(\theta)$, which is an anticonformal involution, contradicting the hypothesis that $X$ is pseudoreal.

Conversely, for a NEC group $\Delta$ with signature $(3;-;[-],\{-\})$ and a non-split extension $H\le G$ of degree $2$, we can consider the map $\theta(d_1)=d',\ \theta(d_2)=d'',\ \theta(d_3)=d'''$ which induces an epimorphism $\theta:\Delta\longrightarrow G$, defining an essential action on $X:=\mathbb{H}\slash \ker(\theta)$. We must have that $G$ is the full automorphism group of $X$, because otherwise $\ker(\theta)$ would be a normal subgroup of a NEC group $\Delta'$ with signature $(0;+;[2,2,2],\{(-)\})$, and so by the previous part of the proof, the mapping $$d'\mapsto (d')^{-1},\ \ \ d''\mapsto (d')^2(d'')^{-1}(d')^{-2},\ \ \ d'''\mapsto (d''')^{-1}$$ would define an automorphism of $G$, contradicting our assumptions. Finally, since $G$ is a non-split extension of $H$, then $G\backslash H$ contains no involutions, thus $X$ is a pseudoreal Riemann surface.
\end{proof}

\subsection{Existence for any genus}

In \cite[Theorem 1 and p.~48]{Sin80} the author proved the following result. We provide the proof since it is an interesting application 
of the approach through NEC groups.

\begin{theorem}\label{existencia} There exist pseudoreal Riemann surfaces for every genus $g\ge 2$.
\end{theorem}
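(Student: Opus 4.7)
The plan is to realize, for each $g \geq 2$, a Riemann surface of genus $g$ carrying an anticonformal automorphism of order $4$ but no anticonformal involution. Since the two anticonformal cosets of the index-two conformal subgroup of $C_4 = \langle a\rangle$ consist of the elements $a$ and $a^3$, both of order $4$, any surface admitting such a $C_4$-action as a subgroup of $\mathrm{Aut}^{\pm}(X)$ automatically has $M_{\cc/\rr}(X)=\rr$; the work is to produce such an action without introducing any anticonformal involution in the full group.

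First I would pick a proper NEC group $\Gamma$ with signature
$(h;-;[\underbrace{2,\ldots,2}_{r}];\{-\})$
and presentation
$$\Gamma=\langle x_1,\ldots,x_r,\,d_1,\ldots,d_h \mid x_i^2=1,\ x_1\cdots x_r\, d_1^2\cdots d_h^2 = 1\rangle,$$
and define $\theta\colon\Gamma\to C_4$ by $\theta(x_i)=a^2$ and $\theta(d_j)=a$. The long relation becomes $a^{2(r+h)}=1$, which holds precisely when $r+h$ is even, and $\theta$ is surjective because $\theta(d_1)=a$. The orders of the elliptic generators are preserved (both $x_i$ and $\theta(x_i)$ have order $2$), so $\ker(\theta)$ is torsion free; and since $\theta(\Gamma^+)=\langle a^2\rangle\subsetneq C_4$, we have $\ker(\theta)\subseteq\Gamma^+$, so the quotient $X=\mathbb{H}/\ker(\theta)$ is an orientable Riemann surface on which the image of $d_1$ acts as an anticonformal automorphism of order $4$.

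A Riemann--Hurwitz computation for the degree-$4$ cover $\mathbb{H}/\ker(\theta)\to\mathbb{H}/\Gamma$ gives $2g-2=4(h-2+r/2)$, hence $g=2h+r-3$. For $g$ odd I would take $(h,r)=(2,g-1)$, and for $g$ even $(h,r)=(1,g+1)$; in both cases $r+h$ is even and $h\geq 1$, $r\geq 0$, so the signature is admissible and the resulting surface has the required genus. This realizes a $C_4$-action on a genus-$g$ surface whose anticonformal elements are exactly $a,a^3$, neither of which is an involution.

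The last and most delicate step is to verify that the entire group $\mathrm{Aut}^{\pm}(X)$, not merely the subgroup $C_4$, contains no anticonformal involution. This amounts to analyzing whether $\Gamma$ sits inside a strictly larger NEC group $\Gamma'$ with $\ker(\theta)$ still normal. Using the NEC finite-maximality list of Bujalance \cite{Buj82} for the chosen signatures $(h;-;[2,\ldots,2];\{-\})$, one rules out the dangerous extensions: any admissible extension either fails to normalize $\ker(\theta)$ (and can be discarded by choosing a generic kernel within its Teichm\"uller stratum), or it extends $C_4$ inside a $2$-group in which the coset of anticonformal elements still contains no element of order $2$. This is the main obstacle, and handling it cleanly is exactly where Singerman's list of non-maximal NEC signatures (and the subsequent completion by Est\'evez--Izquierdo) is used. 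Once this verification is in place, $X$ has anticonformal automorphisms but no anticonformal involution, so $X$ is a pseudoreal Riemann surface of genus $g$, as desired.
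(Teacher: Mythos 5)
Your construction is exactly the paper's: the same NEC signatures $(1;-;[2^{g+1}];\{-\})$ for $g$ even and $(2;-;[2^{g-1}];\{-\})$ for $g$ odd, the same epimorphism onto $C_4$ sending the elliptic generators to $a^2$ and the glide reflections to $a$, and the same appeal to maximality of the signature to control $\mathrm{Aut}^{\pm}(X)$. The one step you defer --- ruling out a strictly larger NEC group containing $\Gamma$ --- is handled in the paper in a single line: these signatures do not appear on the Bujalance/Singerman lists of non-finitely-maximal signatures, so for a generic choice of $\Gamma$ one gets $\mathrm{Aut}^{\pm}(X)\cong C_4$ on the nose, and the case analysis of ``dangerous extensions'' you sketch is not needed.
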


\proof Consider a NEC group $\Delta$ with signature $(1;-;[2^{g+1}];\{-\})$, where $g$ is even. If $x_1,\dots, x_{g+1}$ is the set of elliptic generators and $d_1$ is the glide reflection, which together generate $\Delta$, we can define an epimorphism 
$\theta:\Delta\longrightarrow C_4=\langle a:a^4=1\rangle$ given by 
$$\theta(x_i)=a^2,\ \ \ \forall i\in\{1,\ldots ,g+1\},\ \ \ \ \ \ \theta(d_1)=a.$$ 
Since $\theta$ preserves the orders of the elliptic generators then $\ker(\theta)$ is torsion free (see \cite[p.~12]{BS}), 
so the quotient $X=\mathbb{H}\slash \ker(\theta)$ is a Riemann surface such that ${\rm Aut}^{\pm}(X)$ contains the group $\Delta\slash \ker(\theta)\cong C_4$. Since $\Delta$ has finitely maximal signature \cite{Buj82}, we can conclude  that ${\rm Aut}^{\pm}(X)\cong C_4$. This Riemann surface $X$ has genus $g$ and has anticonformal automorphisms, but no anticonformal involutions, because $a^2$, the only element of order $2$ in $C_4$, is in the conformal part ${\rm Aut}^{+}(X)\cong \{1,a^2\}$. Thus $X$ is pseudoreal.

The proof for odd $g$ is similar, taking the NEC signature $(2;-;[2^{g-1}];\{-\})$ and considering the epimorphism $\theta:\Delta\longrightarrow C_4=\langle a:a^4=1\rangle$ given by $$\theta(x_i)=a^2,\ \ \ \forall i\in\{1,\ldots ,g-1\},\ \ \ \ \ \ \theta(d_1)=\theta(d_2)=a. \eqno\qed$$

\subsection{Full groups of pseudoreal Riemann surfaces and group extensions}

Let $\overline G$ be a group acting on a Riemann surface $X$ and containing antiholomorphic automorphisms 
(i.e. acting as an essential group). Then there is an exact sequence of groups
\begin{equation}\label{ext}
1\longrightarrow G\longrightarrow \overline{G}\longrightarrow C_2\longrightarrow 0,
\end{equation}
where $G$ denotes the subgroup of holomorphic automorphisms in $\bar G$.
If $X$ is pseudoreal, then $\overline G\setminus G$ contains no order two elements, or equivalently the sequence is non-split.
On the other hand, the group $G$ contains an element of order two by Cauchy's theorem applied to $\overline G$, thus  
$G$ has even order.
In \cite{BaGo10} the authors proved that these conditions are sufficient for a group $\overline G$ to act 
on a pseudoreal Riemann surface.

\begin{theorem} \cite[Theorem 3.3]{BaGo10}\label{bagofull} A finite group $\overline{G}$ acts as an essential group on a 
pseudoreal Riemann surface $X$ if and only if it is a non-split extension of a group of even order by the cyclic group of order $2$. For any such group, 
there exists a Riemann surface $X$ having $\overline{G}$ as its full automorphism group.
\end{theorem}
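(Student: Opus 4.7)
The forward direction follows from the discussion immediately preceding the statement. An essential action of $\overline{G}$ on a Riemann surface $X$ yields the exact sequence (\ref{ext}) with kernel $G=\overline{G}^+$. If $X$ is pseudoreal then $\overline{G}\setminus G$ contains no involution, so the sequence is non-split; Cauchy's theorem applied to the even-order group $\overline{G}$ produces an involution, which must lie in $G$, proving that $|G|$ is even.

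For the converse, given a non-split extension $1\to G\to \overline{G}\to C_2\to 1$ with $|G|$ even, the strategy is to realize $\overline{G}$ as the quotient $\Gamma/\ker\theta$ via a surface-kernel epimorphism $\theta\colon\Gamma\to\overline{G}$ from a proper NEC group $\Gamma$ of finitely maximal signature. Pick any $t\in \overline{G}\setminus G$: non-splitness rules out order two, and $t^{\operatorname{ord}(t)}=1\in G$ forces $\operatorname{ord}(t)$ to be even, so $\operatorname{ord}(t)\ge 4$. One then chooses a non-orientable signature $(h;-;[m_1,\ldots,m_r];\{-\})$ (no boundary components) and defines $\theta$ by sending the glide reflections $d_1,\ldots,d_h$ into $\overline{G}\setminus G$ (at least one equal to $t$) and the elliptic generators $x_1,\ldots,x_r$ into $G$ at the prescribed orders. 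Three conditions must be arranged: (a) $\theta$ is surjective, whence $\theta(\Gamma^+)=G$ automatically because every $d_i$ lies outside $G$; (b) $\theta$ preserves the orders of the elliptic elements, so that $\ker\theta$ is a torsion-free Fuchsian group (see \cite[p.~12]{BS}); and (c) the long relation $x_1\cdots x_r\,d_1^2\cdots d_h^2=1$ holds in $\overline{G}$. When these are met, $X=\mathbb{H}/\ker\theta$ is a Riemann surface carrying $\overline{G}$ as a group of automorphisms, and non-splitness immediately forces the absence of antiholomorphic involutions in $\overline{G}\setminus G$.

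The main obstacle is to ensure both (i) that $\overline{G}$ is the \emph{full} automorphism group ${\rm Aut}^{\pm}(X)$, not just a subgroup, and (ii) that a viable $(\Gamma,\theta)$ exists for every such abstract $\overline{G}$. For (i) one needs $s(\Gamma)$ to be finitely maximal; by the NEC analogue of Singerman's list due to Bujalance \cite{Buj82} and completed in \cite{EstIzq06}, most signatures of the proposed shape are finitely maximal, while the handful of exceptional non-maximal signatures must be treated separately by checking directly that $\ker\theta$ is not normal in any enveloping NEC group --- exactly the kind of analysis carried out in Lemma~\ref{BG4} for $(3;-;[-];\{-\})$. For (ii), flexibility in the parameters $h,r,m_i$ suffices: starting from a generating set of $G$ of prescribed orders, one adjoins $t$ (and, if necessary, a second antiholomorphic element) among the $d_i$, then inflates $h$ or pads with canceling pairs $x_j,x_j^{-1}$ until the long relation closes, all the while keeping $s(\Gamma)$ inside the finitely maximal range. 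A careful bookkeeping on these parameters then delivers the desired pseudoreal Riemann surface $X$ with ${\rm Aut}^{\pm}(X)=\overline{G}$.
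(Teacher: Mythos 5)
The paper does not actually prove this statement: the forward implication is contained in the paragraph immediately preceding the theorem (exactly as you argue it --- non-splitness from the absence of involutions in $\overline{G}\setminus G$, evenness of $|G|$ from Cauchy's theorem applied to $\overline{G}$), while the converse, including the assertion that $\overline{G}$ is realized as the \emph{full} automorphism group of some pseudoreal surface, is simply quoted from \cite[Theorem 3.3]{BaGo10}. Your forward direction therefore coincides with the paper's, and is correct.

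For the converse, your outline follows the same general method as the cited source (a surface-kernel epimorphism $\theta\colon\Gamma\to\overline{G}$ from a proper NEC group without reflections, plus finite maximality of $s(\Gamma)$ to force $\Gamma=N_{{\rm PGL}(2,\mathbb{R})}(\ker\theta)$), and the individual reductions you make are sound: ${\rm ord}(t)\ge 4$ for $t\in\overline{G}\setminus G$, $\theta(\Gamma^+)=G$ once every glide reflection maps outside $G$, torsion-freeness of $\ker\theta$ from order preservation, and pseudoreality from non-splitness once $\overline{G}$ is known to be the full group. The genuine gap is at the step that carries all the weight: the existence, for \emph{every} non-split extension $\overline{G}$ of an even-order group by $C_2$, of a signature and generating vector that \emph{simultaneously} achieve surjectivity, order preservation, the long relation $x_1\cdots x_r\,d_1^2\cdots d_h^2=1$, positive hyperbolic area, and finite maximality is asserted (``a careful bookkeeping \dots\ delivers'') rather than established. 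In particular, one must exhibit how to write the specific element $(d_1^2\cdots d_h^2)^{-1}\in G$ as an ordered product of elliptic images whose orders are preserved, while keeping the resulting signature off the Bujalance--Est\'evez--Izquierdo list of non-maximal NEC signatures --- or else, for the exceptional signatures, run a non-normality check of the kind carried out in Lemma \ref{BG4}. That simultaneous realization is the entire content of the nontrivial direction of the theorem; as written, your converse is a correct strategy but not a proof.
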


As a consequence of the previous theorem,  it can be easily seen that no symmetric or dihedral group 
can be the full automorphism group of a pseudoreal Riemann surface, and that any cyclic group of order $4n$, $n\geq 1$, 
is the full automorphism group of some pseudoreal Riemann surface.

 Motivated by the previous theorem, we study the possible extensions of a group $G$ by the cyclic group $C_2$.
The most general approach to this problem is through cohomology of finite groups (see \cite[Ch.~1]{Coh04}), but we will use easier tools.

Given an extension as in (\ref{ext}), consider an element $x\in\overline{G}\backslash G$. 
Since $G$ is normal in $\overline{G}$, this induces an automorphism $\phi_x$ of $G$ defined by conjugation by $x$ 
(from now on, we will denote by $\phi_p$ the conjugation by the element $p$).
Moreover  $g=x^2\in G$, $\phi_x^2=\phi_g$ and $g$ is fixed by $\phi_x$. Let $P(G)$ be the subset of ${\rm Aut}(G)\times G$ defined by $$P(G):=\{(\phi,g)\in {\rm Aut}(G)\times G:\ \phi^2=\phi_g,\ \  \phi(g)=g\}.$$
We can define an equivalence relation on $P(G)$ by $$(\phi,g)\sim (\phi\circ \phi_h, \phi(h)gh),\ \ \ \ \forall h\in G.$$
 Let $E(G)$ be the quotient set $P(G)\slash\sim$.

\begin{lemma} \label{lemma3.3.1} Given a group $G$, there exists a well defined function from the set of group extensions 
$$1\longrightarrow G\longrightarrow \overline{G}\longrightarrow C_2\longrightarrow 0,$$ to $E(G)$.
\end{lemma}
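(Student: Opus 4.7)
The plan is to produce the map explicitly, by lifting an element of $C_2$ to $\overline G$, and then check two things: that the associated pair lies in $P(G)$, and that any other lift yields an equivalent pair in $E(G)$. More precisely, given an extension $1\to G\to \overline G\to C_2\to 0$, I would pick any $x\in\overline G\setminus G$, set $g:=x^2\in G$ and let $\phi_x\in\Aut(G)$ be conjugation by $x$ (well-defined since $G\lhd\overline G$), and define the image of the extension to be the class $[(\phi_x,g)]\in E(G)$.

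First I would verify that $(\phi_x,g)\in P(G)$. For every $a\in G$ one has
\[
\phi_x^{2}(a)=x^{2}ax^{-2}=gag^{-1}=\phi_g(a),
\]
so $\phi_x^{2}=\phi_g$, and
\[
\phi_x(g)=xgx^{-1}=x\cdot x^{2}\cdot x^{-1}=x^{2}=g,
\]
so $\phi_x$ fixes $g$. Both conditions defining $P(G)$ are therefore satisfied.

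Next I would show that the class in $E(G)$ does not depend on the choice of $x$. Any other element $x'\in\overline G\setminus G$ has the form $x'=xh$ with $h\in G$, because $x$ and $x'$ have the same image in $C_2$. A direct computation gives
\[
\phi_{x'}(a)=(xh)a(xh)^{-1}=x\phi_h(a)x^{-1}=(\phi_x\circ\phi_h)(a),
\]
and
\[
(x')^{2}=xhxh=x^{2}\bigl(x^{-1}hx\bigr)h\cdot \ldots
\]
(it is cleaner to rewrite $xhxh=(xhx^{-1})\cdot x^{2}\cdot h=\phi_x(h)\,g\,h$). Hence
\[
(\phi_{x'},(x')^{2})=(\phi_x\circ\phi_h,\ \phi_x(h)\,g\,h),
\]
which is by definition the pair equivalent to $(\phi_x,g)$ under $\sim$. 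Thus the map from the set of extensions to $E(G)$ is well defined.

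There is no real obstacle: the whole content of the lemma is a bookkeeping exercise that amounts to matching the equivalence relation on $P(G)$ with the ambiguity in the choice of a section-on-the-nontrivial-element of $C_2$. The only place where one has to be a bit careful is the expansion of $(xh)^{2}$, where one must rewrite $xhxh$ so that the two factors from $G$ sit on the right of $g=x^{2}$; this is what forces the asymmetric form $\phi_x(h)\,g\,h$ appearing in the definition of $\sim$.
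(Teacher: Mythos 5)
Your proposal is correct and follows exactly the route the paper takes: choose $x\in\overline G\setminus G$, form $(\phi_x,x^2)$, and check that replacing $x$ by $xh$ changes the pair to $(\phi_x\circ\phi_h,\phi_x(h)\,x^2\,h)$, which is precisely the equivalence defining $E(G)$. The paper leaves the verification as "an easy exercise"; you have supplied it, including the check that $(\phi_x,x^2)\in P(G)$, and the computations are right.
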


\proof For any such extension we can take an element $x\in \overline{G}\backslash G$ 
and construct the pair $(\phi_x,x^2)$. It is an easy exercise to check that different choices of $x$ lead to equivalent pairs. \qed

\begin{lemma} \label{3.3.2} Given an element $(\phi,g)\in P(G)$, there exist an extension $$1\longrightarrow G\longrightarrow \overline{G}\longrightarrow C_2\longrightarrow 0,$$ and an element $x\in \overline{G}\backslash G$ such that $\phi=\phi_x$ and $x^2=g$.
\end{lemma}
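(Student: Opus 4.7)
The plan is to build $\overline{G}$ by hand as a disjoint union of two copies of $G$, with a multiplication cooked up so that one copy is a normal subgroup isomorphic to $G$ and the generator $x$ of the second coset conjugates like $\phi$ and squares to $g$. The conditions defining $P(G)$ — namely $\phi^2=\phi_g$ and $\phi(g)=g$ — are exactly what is needed for this multiplication to be associative.

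Concretely, let $x$ be a formal symbol and set $\overline{G}:=G\sqcup Gx$ as a set. Define the product by the rules
\[
(a)(b)=ab,\quad (a)(bx)=(ab)x,\quad (ax)(b)=(a\phi(b))x,\quad (ax)(bx)=a\phi(b)g,
\]
for $a,b\in G$. The identity is $1\in G$, and one writes down inverses explicitly (for instance, $(ax)^{-1}=\phi^{-1}(g^{-1}a^{-1})x$). The embedding $G\hookrightarrow\overline G$ and the map $\overline G\to C_2$ sending $a\mapsto 0$, $ax\mapsto 1$ are immediate, so $G$ is normal of index two in $\overline G$ and the sequence $1\to G\to\overline G\to C_2\to 0$ is an extension.

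The only real work is associativity, which I would verify by splitting into cases according to which factors lie in $G$ versus $Gx$. The two genuinely nontrivial cases are $(ax)(bx)(cx)$ and $(ax)(b)(cx)$; all others are either immediate from associativity in $G$ or reduce to the identity $\phi(uv)=\phi(u)\phi(v)$ since $\phi\in{\rm Aut}(G)$. For the first, computing both bracketings gives $a\phi(b)g\phi(c)x$ versus $a\phi(b\phi(c)g)x=a\phi(b)\phi^2(c)\phi(g)x$, and equality forces $g\phi(c)=\phi^2(c)\phi(g)$, i.e.\ $\phi^2(c)=g\phi(c)\phi(g)^{-1}$; using $\phi(g)=g$ this is exactly $\phi^2=\phi_g$. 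A parallel check of the other case uses the same two identities. Thus associativity holds precisely because $(\phi,g)\in P(G)$.

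Finally, once $\overline G$ is a group, the element $x\in\overline G\setminus G$ satisfies $x\cdot x=1\cdot\phi(1)g=g$, and $x\cdot a\cdot x^{-1}=(\phi(a))x\cdot x^{-1}=\phi(a)$ for every $a\in G$, so $\phi_x=\phi$ and $x^2=g$ as required. The hard part is really just the bookkeeping in the associativity check; conceptually the construction is forced, because any extension realising $(\phi,g)$ must have this multiplication once a lift $x$ of the nontrivial coset has been fixed.
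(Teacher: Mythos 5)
Your construction is correct in outline but follows a genuinely different route from the paper. You build $\overline G=G\sqcup Gx$ directly as a crossed product, with the multiplication table forced by the requirements $\phi_x=\phi$ and $x^2=g$, and you locate the role of the two conditions defining $P(G)$ inside the associativity check. The paper instead realizes $\overline G$ as the quotient $(G\rtimes_F\mathbb{Z})/\langle(g^{-1},z^2)\rangle$ with $F(z)=\phi$: associativity comes for free from the semidirect product, and the work shifts to checking that $\langle (g^{-1},z^2)\rangle$ is normal and that the quotient has exactly the coset structure $\{(a,1)\}\cup\{(a,z)\}$, after which $x=(1,z)$ does the job. Your version makes it more transparent exactly where $\phi^2=\phi_g$ and $\phi(g)=g$ enter; the paper's is quicker to write down rigorously.

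That said, two computational slips in your verification need fixing. In the triple product $((ax)(bx))(cx)$ the left bracketing is $(a\phi(b)g)(cx)=(a\phi(b)g\,c)x$ by your own rule $(p)(cx)=(pc)x$ --- no $\phi$ is applied to $c$ --- so associativity forces $gc=\phi^2(c)\phi(g)$ for all $c$; setting $c=1$ gives $\phi(g)=g$, and then the identity becomes exactly $\phi^2=\phi_g$. The identity you wrote, $g\phi(c)=\phi^2(c)\phi(g)$, would instead force $\phi^2=\phi_g\circ\phi$, which is not a condition in $P(G)$. Relatedly, the case $(ax)(b)(cx)$ is actually immediate (both bracketings give $a\phi(b)\phi(c)g$, using only that $\phi$ is a homomorphism); the second genuinely nontrivial case is $(ax)(bx)(c)$, which forces $gc=\phi^2(c)g$, i.e. $\phi^2=\phi_g$ on its own. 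Finally the inverse should read $(ax)^{-1}=\bigl(\phi^{-1}(a^{-1})g^{-1}\bigr)x$ rather than $\phi^{-1}(g^{-1}a^{-1})x$; the latter is only a right-inverse up to conjugation by $g$. With these corrections the argument is complete and yields the same group as the paper's construction.
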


\begin{proof} Consider the group $\overline{G}$ defined by $$\overline{G}:=(G\rtimes_F \mathbb{Z})/\langle (g^{-1},z^2)\rangle,$$ where $\mathbb{Z}$ is the cyclic group generated by $z$ and $F$ is the homomorphism induced by $$F:\mathbb{Z}\longrightarrow {\rm Aut}(G),\ \ \ \ z\mapsto \phi.$$ The subgroup $\langle (g^{-1},z^2)\rangle$ is normal in $G\rtimes_F \mathbb{Z}$, so $\overline{G}$ is a group. Clearly $G$ injects into $\overline{G}$ through $a\mapsto (a,1)$, and we have that $$\overline{G}=\{(g,1)\ ,\ g\in G\}\cup \{(g,z)\ ,\ g\in G\},$$ because for $(p,z^m)\in G\rtimes_{F}\mathbb{Z}$ we have two cases $$[(p,z^{m})]=[(pg^{\frac{m}{2}},1)]\ \ \ \ \ \ {\rm for\ even}\ m,$$ $$[(p,z^{m})]=[(pg^{\frac{m-1}{2}},z)]\ \ \ \ \ {\rm for\ odd}\ m,$$
\noindent so $|\overline{G}|=2|G|$ and we have $\overline{G}\slash G\cong C_2$. Moreover $\phi_{(1,z)}(h,1)=(\phi(h),1),$ and $(1,z)^2=(g,1),$ so we can choose $x$ as $(1,z)$.
\end{proof}

\begin{theorem}\label{theorem} There is a bijection between the set of isomorphism classes of extensions of $G$ by $C_2$, and $E(G)$.
Moreover, one such extension is split if and only if it corresponds to a pair $[(\phi,g)]$ where $g=e$,
and it is a direct product if and only if one can choose $\phi={\rm id}_G, g=e$.
\end{theorem}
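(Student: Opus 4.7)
The plan is to use Lemmas \ref{lemma3.3.1} and \ref{3.3.2} to construct mutually inverse maps between the set of isomorphism classes of extensions of $G$ by $C_2$ and $E(G)$. Denote by $\Psi$ the assignment from extensions to $E(G)$ given by Lemma \ref{lemma3.3.1}, sending the class of $\overline{G}$ to $[(\phi_x, x^2)]$ for any chosen $x \in \overline{G}\setminus G$, and by $\Phi$ the opposite assignment supplied by Lemma \ref{3.3.2}.

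First I would verify that $\Psi$ is well-defined on isomorphism classes. The key computation, adopting the convention $\phi_p(a)=pap^{-1}$, is that if $x$ is replaced by $xh$ for some $h\in G$, then $\phi_{xh}=\phi_x\circ\phi_h$ and $(xh)^2=\phi_x(h)\cdot x^2\cdot h$, which is exactly the transformation $(\phi,g)\mapsto(\phi\circ\phi_h,\phi(h)gh)$ defining the equivalence on $P(G)$. Since any isomorphism of extensions must carry a chosen $x$ to an element of the form $x'h$ in the target extension, $\Psi$ factors through isomorphism classes. Dually, I would check that $\Phi$ is well-defined by producing an explicit isomorphism between the quotients of $G\rtimes_F\mathbb{Z}$ built from equivalent pairs $(\phi,g)$ and $(\phi\circ\phi_h,\phi(h)gh)$, essentially implemented by the substitution $z\mapsto zh$.

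Next I would show the two compositions are identities. For $\Psi\circ\Phi=\mathrm{id}$, the element $x=(1,z)$ in the construction of Lemma \ref{3.3.2} satisfies $\phi_x=\phi$ and $x^2=(g,1)$ by direct inspection. For $\Phi\circ\Psi=\mathrm{id}$, given an extension $\overline{G}$ with choice $x$, the map $(h,z^k)\mapsto hx^k$ from $G\rtimes_F \mathbb{Z}$ to $\overline{G}$ is a surjective homomorphism with kernel exactly $\langle(x^{-2},z^2)\rangle$, yielding the required isomorphism of extensions.

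Finally, the two special-case assertions follow by unwinding the correspondence. The extension is split if and only if $\overline{G}$ contains an involution outside $G$, equivalently some representative has $x^2=e$; by the equivalence relation this happens precisely when one can choose $g=e$ in the class. The extension is a direct product $G\times C_2$ if and only if such an involution can further be chosen to commute with all of $G$, that is $\phi=\phi_x=\mathrm{id}_G$ together with $g=e$. The main obstacle is the bookkeeping for well-definedness: matching the formal equivalence $(\phi,g)\sim(\phi\circ\phi_h,\phi(h)gh)$ with the explicit conjugation computation for $x\mapsto xh$, and confirming that equivalent pairs genuinely produce isomorphic extensions. Once these are in place, the compositions and the characterizations of split and direct-product extensions reduce to routine verifications inside the semidirect-product quotients built in Lemma \ref{3.3.2}.
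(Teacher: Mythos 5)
Your proposal is correct and follows essentially the same route as the paper: both use Lemma \ref{lemma3.3.1} and Lemma \ref{3.3.2} as the two directions of the correspondence, verify well-definedness via the computation $(xh)^2=\phi_x(h)x^2h$ and an explicit isomorphism between the semidirect-product quotients attached to equivalent pairs, and read off the split and direct-product cases from the existence of an involution (resp.\ a central involution) outside $G$. Your explicit check that the two assignments are mutually inverse is slightly more detailed than the paper's, but the argument is the same.
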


\begin{proof} Given an extension as in (\ref{ext}) 
we can associate to it the class $[(\phi_x,x^2)]$ by Lemma \ref{lemma3.3.1}. 
It is easy to see that an isomorphic extension leads to the same pair. 
Conversely, by Lemma \ref{3.3.2}, we can associate to every pair $(\phi,g)\in P(G)$ an extension of $G$ defined by $$A=\left(G\rtimes_F \mathbb{Z}\right)\slash \langle (g^{-1},x^2)\rangle$$ as we did above. Every pair $(\phi\circ \phi_h,\phi(h)gh)$ equivalent to $(\phi,g)$ will give us another group $$B=(G\rtimes_{F'}\mathbb{Z})\slash \langle((\phi(h)gh)^{-1},y^2)\rangle,$$ where $\mathbb{Z}=\langle y\rangle$, $h\in G$ and $F':\mathbb{Z}\longrightarrow {\rm Aut}(G)$ is induced by $y\mapsto \phi\circ \phi_h$. An isomorphism $\alpha:A\longrightarrow B$ is induced by $\alpha(g,1)=(g,1)$, $\alpha(1,x)=(\phi(h)^{-1},y)$. It is well defined because $$\alpha(g^{-1},x^2)=(g^{-1},1)(\phi(h)^{-1},y)(\phi(h)^{-1},y)=((\phi(h)gh)^{-1},y^2)$$ and clearly $\alpha|_G={\rm Id}_G$.

The exact sequence (\ref{ext}) is split if and only if $\overline{G}\backslash G$ has an order $2$ element $p$, which gives us the desired pair $(\phi_p,e)$.  
Moreover $\overline{G}\cong G\times C_2$ if and only if one can choose $\phi_p={\rm Id}_G$.
\end{proof}

\begin{corollary} Let $G$ be a finite group. Any extension of $G$ by $C_2$ is split if $Z(G)$ is trivial and one of the following holds
\begin{enumerate}
\item  ${\rm Out}(G):={\rm Aut}(G)\slash {\rm Inn}(G)$ has no involutions;
\item  ${\rm Inn}(G)$ has a group complement in ${\rm Aut}(G)$.
\end{enumerate}
\end{corollary}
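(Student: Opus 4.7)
The plan is to exploit Theorem~\ref{theorem}: an extension of $G$ by $C_2$ classified by $[(\phi,g)]\in E(G)$ is split precisely when its equivalence class contains a representative with second coordinate $e$. Using the equivalence $(\phi,g)\sim(\phi\circ\phi_h,\phi(h)gh)$, this reduces to finding $h\in G$ with $\phi(h)\,g\,h=e$. The strategy in both cases is to use the hypotheses together with $Z(G)=\{1\}$ to extract from $\phi$ a canonical element $a\in G$, and then to verify that $h=a^{-1}$ does the job.

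The first step I would make is to observe that $Z(G)=\{1\}$ makes the natural map $G\to\mathrm{Inn}(G)$, $a\mapsto \phi_a$, an isomorphism, so any relation $\phi_a=\phi_b$ forces $a=b$. The condition $\phi^2=\phi_g$ then says that the image $\bar\phi\in\mathrm{Out}(G)$ satisfies $\bar\phi^2=1$. Under hypothesis~(i), no involutions in $\mathrm{Out}(G)$ means $\bar\phi=1$, hence $\phi=\phi_a$ for some $a\in G$; then $\phi_{a^2}=\phi^2=\phi_g$ yields $a^2=g$ by injectivity, and a one-line computation gives $\phi(a^{-1})\,g\,a^{-1}=a^{-1}\cdot a^2\cdot a^{-1}=e$.

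Under hypothesis~(ii), fix a complement $K$ so that $\mathrm{Aut}(G)=\mathrm{Inn}(G)\rtimes K$, and decompose $\phi=\phi_a\,k$ uniquely with $a\in G$, $k\in K$. Expanding $\phi^2$ using that $\mathrm{Inn}(G)$ is normal gives $\phi^2=\phi_{a\cdot k(a)}\,k^2$; since $\phi^2\in\mathrm{Inn}(G)$ and $\mathrm{Inn}(G)\cap K=\{1\}$, we get $k^2=1$ and, via the centerless hypothesis, $g=a\cdot k(a)$. The choice $h=a^{-1}$ then yields $\phi(h)\,g\,h=a\,k(a^{-1})\,a^{-1}\cdot a\,k(a)\cdot a^{-1}=e$. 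The only subtlety is ensuring that this decomposition is genuinely canonical (that is the role of $\mathrm{Inn}(G)\cap K=\{1\}$ combined with $Z(G)=\{1\}$); there is no serious obstacle beyond bookkeeping, since the ansatz $h=a^{-1}$ works uniformly in both cases once the correct $a$ is identified.
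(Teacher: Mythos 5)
Your proof is correct and follows essentially the same route as the paper: both arguments reduce, via Theorem~\ref{theorem}, to producing a representative of the class $[(\phi,g)]$ whose second coordinate is $e$, handle (i) by using the absence of involutions in ${\rm Out}(G)$ to conclude $\phi$ is inner, handle (ii) by decomposing $\phi$ along ${\rm Aut}(G)={\rm Inn}(G)\rtimes K$, and then invoke $Z(G)=\{1\}$. The only difference is cosmetic: you verify $\phi(h)gh=e$ by explicit computation with $h=a^{-1}$, whereas the paper first normalizes the first coordinate and then infers that the second coordinate must be $e$ abstractly from the constraint $\varphi^{2}=\phi_{g'}$ together with centerlessness.
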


\begin{proof} Given a group extension as in (\ref{ext}),
we can associate to it a pair $(\phi,g)$, such that $\phi^2=\phi_g$. 
The class $[\phi]$  in ${\rm Out}(G)$ satisfies $[\phi]^2=[1]$, but ${\rm Out}(G)$ has no order $2$ elements, so $\phi\in {\rm Inn}(G)$. 
In that case $(\phi,g)\sim (\phi\circ \phi^{-1},g')=({\rm Id}_G,g')$ for some $g'\in G$, where ${\rm Id}_G^2=\phi_{g'}$. Since $Z(G)=\{1\}$, then $g'=e$. 
Thus $(\phi,g)\sim ({\rm Id}_G,e)$ so by  Theorem \ref{theorem} every extension of $G$ by $C_2$ will be a direct product $G\times C_2$. This proves (i).

Let $H$ be the group complement of ${\rm Inn}(G)$ in ${\rm Aut}(G)$, that is $${\rm Aut}(G)= H\cdot {\rm Inn}(G),\ \ \ \ H\cap {\rm Inn}(G)=\{1\}.$$ 
If $(\phi,g)\in P(G)$, then $\phi^2=\phi_g$ and $\phi(g)=g$. We have that ${\rm Aut}(G)=H\cdot {\rm Inn}(G)$, so $\phi\in {\rm Aut}(G)$ can be written as $\phi=\varphi\circ \phi_h$ with $\varphi\in H$ and $\phi_h\in {\rm Inn}(G)$, so $(\phi,g)\sim (\varphi\circ \phi_h,g)\sim (\varphi,g')$ for some $g'\in G$. We also have $\varphi^2\in H\cap {\rm Inn}(G)=\{1\}$ so $\varphi^2=1$, but $\varphi^2=\phi_{g'}$ so $\phi_{g'}=1$, which is equivalent to $g'=e$ because $Z(G)=\{1\}$. In that case $(\varphi,g')=(\varphi,e)$ so we get the desired equality $[(\phi,g)]=[(\varphi,e)]$ and we obtain (ii) by Theorem \ref{theorem}.
\end{proof}

 If we translate the previous result for pseudoreal Riemann surfaces, we get the following. The 
 second condition had already been proved in \cite[Corollary 4.3]{DebEms99} with different techniques.

\begin{corollary}\label{comp}
Let $G$ be the conformal automorphism group of a Riemann surface $X$. 
Suppose that $Z(G)=\{1\}$ and that either ${\rm Out}(G)$ has no involutions or ${\rm Inn}(G)$ has group complement 
in ${\rm Aut}(G)$. Then $X$ is not pseudoreal.
\end{corollary}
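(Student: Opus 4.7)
The plan is to deduce this corollary as a direct translation of the purely group-theoretic statement immediately preceding it (the corollary about extensions of $G$ by $C_2$) into the language of Riemann surfaces, using the characterization of full automorphism groups of pseudoreal Riemann surfaces provided by Theorem \ref{bagofull}.

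First, I would argue by contradiction: suppose $X$ is pseudoreal. Then $X$ carries antiholomorphic automorphisms but no antiholomorphic involutions, so the full automorphism group $\overline{G} := {\rm Aut}^{\pm}(X)$ fits in an exact sequence
\[
1 \longrightarrow G \longrightarrow \overline{G} \longrightarrow C_2 \longrightarrow 0,
\]
and this sequence is \emph{non-split}, since any section $C_2 \to \overline{G}$ would produce an antiholomorphic involution of $X$ (this is exactly the argument used in the proof of Theorem \ref{bagofull}, and also follows directly from the definition of pseudoreal).

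Next, I would invoke the previous corollary (the group-theoretic one built from Theorem \ref{theorem}). Under the standing hypothesis $Z(G) = \{1\}$ together with either condition (i) that ${\rm Out}(G)$ contains no involutions or condition (ii) that ${\rm Inn}(G)$ admits a group complement in ${\rm Aut}(G)$, that corollary asserts that every extension of $G$ by $C_2$ is split. Applied to the extension above, this contradicts the non-splitness established in the previous paragraph. Hence $X$ cannot be pseudoreal.

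There is essentially no obstacle here beyond making sure the two pieces are correctly matched: the non-splitness coming from the geometric pseudoreal condition on one side, and the forced splitness coming from the algebraic hypotheses on $G$ on the other. The only subtlety worth a sentence is that Theorem \ref{bagofull} requires $G$ to have even order, but this is automatic from Cauchy's theorem since $\overline{G}$ has even order and so does $G$ once we know ${\rm Aut}^{\pm}(X)$ contains antiholomorphic elements; alternatively, we do not even need this, since we are only using the direction of Theorem \ref{bagofull} that produces a non-split sequence from a pseudoreal surface, which is immediate.
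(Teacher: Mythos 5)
Your proposal is correct and follows exactly the route the paper intends: the paper gives no separate argument for Corollary \ref{comp} beyond the remark that it is the ``translation'' of the preceding group-theoretic corollary, i.e.\ a pseudoreal $X$ would force the non-split extension $1\to G\to {\rm Aut}^{\pm}(X)\to C_2\to 0$, while the hypotheses on $Z(G)$, ${\rm Out}(G)$ and ${\rm Inn}(G)$ force every such extension to split. Your side remark that only the trivial direction of Theorem \ref{bagofull} is needed (pseudoreal $\Rightarrow$ non-split) is also accurate.
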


\subsection{Maximal groups}

A. Hurwitz proved that the order of the conformal automorphism group of a Riemann surface of genus $g\ge 2$ is bounded above by $84(g-1)$ (see \cite[p.~424]{Hur93}), and there are infinitely many Riemann surfaces whose conformal automorphism group attains that bound. 
The first example of such groups was the order $168$ group ${\rm PSL}(2,7)$, which is the conformal automorphism group of the Klein's quartic. 
In the case of pseudoreal Riemann surfaces, the Hurwitz bound is never attained because all such surfaces have conformal automorphism groups of signature $(0;[2,3,7])$, which is an odd signature. 
This also follows from \cite[Theorem 5.4]{Wolfart} since such Riemann surfaces are quasiplatonic, 
i.e. the genus of $X/{\rm Aut}(X)$ is zero and the quotient 
$X\to X/{\rm Aut}(X)$ has at most three branch points.
For pseudoreal Riemann surfaces there is a better upper bound, as we see in the following

\begin{theorem}\label{maximalll} \cite[Theorem 5.1]{BuConCo10} If $X=\mathbb{H}\slash\Gamma$ is a pseudoreal Riemann surface of genus $g$ with full automorphism group $G$, then $|G|\le 12(g-1)$. Moreover, if $|G|=12(g-1)$ and $G=\Delta\slash\Gamma$ then the signature of $\Delta$ is $(1;-;[2,3])$.
\end{theorem}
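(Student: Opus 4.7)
\medskip

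My plan is to read off the structure of the NEC group uniformizing $X$ together with its full automorphism group and then reduce the theorem to a pure minimization problem for NEC signatures.

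First, write $X = \mathbb{H}/\Gamma$ with $\Gamma$ a torsion-free Fuchsian group and $G = \Delta/\Gamma$, where $\Delta = N_{\mathrm{PGL}(2,\mathbb{R})}(\Gamma)$ is a NEC group. Since $X$ is pseudoreal, $\Delta$ must be a \emph{proper} NEC group (it must contain anticonformal elements, which descend to anticonformal automorphisms of $X$), and $\Delta$ must contain \emph{no reflections}: a reflection in $\Delta$ has order $2$ and would project to a nontrivial involution of $G$ outside the conformal subgroup $G^+ = \Delta^+/\Gamma$, contradicting pseudoreality. A proper NEC group with no reflections has empty period cycles and non-orientable quotient, so by Theorem \ref{5.1} (and the discussion surrounding it) the signature of $\Delta$ has the form
\[
s(\Delta) = (g'; -; [m_1, \ldots, m_r]; \{-\}), \qquad g' \geq 1, \quad m_i \geq 2.
\]

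Next, apply the Riemann--Hurwitz formula comparing the hyperbolic areas of $\Gamma$ and $\Delta$. Since $\Gamma$ is torsion-free of genus $g$,
\[
|G| \;=\; \frac{\mu(\Gamma)}{\mu(\Delta)} \;=\; \frac{2g-2}{D}, \qquad D \;:=\; g' - 2 + \sum_{i=1}^{r}\left(1 - \frac{1}{m_i}\right),
\]
and maximizing $|G|$ amounts to minimizing the positive quantity $D$ over admissible NEC signatures of the above type.

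The whole statement then reduces to the arithmetic claim $D \geq 1/6$, with equality characterizing the signature $(1;-;[2,3])$. I would split by $g'$: for $g' \geq 3$ one has $D \geq 1$; for $g' = 2$, positivity of $D$ forces $r \geq 1$ and then $D \geq 1 - 1/m_1 \geq 1/2$; for $g' = 1$, $D = -1 + \sum(1-1/m_i)$ is positive only when $r \geq 2$, and a direct inspection shows $D \geq 2 - 1/m_1 - 1/m_2 - 1 \geq 1/6$ with equality forcing $\{m_1,m_2\} = \{2,3\}$ (and $r = 2$, since adding periods strictly increases $D$). This yields $|G| \leq 12(g-1)$ and the uniqueness of the extremal signature. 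The main technical point, and the only spot requiring some care, is the initial step: namely checking rigorously that no reflection can appear in $\Delta$, since the whole reduction depends on this being able to exclude the signature possibilities with nonempty period cycles and with sign $+$.
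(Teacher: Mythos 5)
Your argument is correct and is essentially the proof of the cited result \cite{BuConCo10}*{Theorem 5.1}; the paper itself only states the theorem with a citation and supplies no proof. The reduction to minimizing the reduced area $D=g'-2+\sum_{i}(1-1/m_i)$ over proper reflection-free NEC signatures $(g';-;[m_1,\dots,m_r];\{-\})$ --- with the exclusion of reflections justified exactly as you do, since a reflection $c\in\Delta$ satisfies $c\notin\Gamma$ and would descend to an anticonformal involution of $X$ --- is the standard route, and your case analysis correctly identifies $1/6$ as the unique positive minimum, attained only at $(1;-;[2,3])$.
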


If a pseudoreal Riemann surface $X$ has genus $g$ and full automorphism group of order $12(g-1)$, 
we will say that $X$ has \emph{maximal full group}. 
Using the programs in Section \ref{prog}  
we found that the minimum genus for which there exists a pseudoreal Riemann surface with maximal full group is $g=14$, with conformal automorphism group ${\rm ID}(78,1)$ and full automorphism group ${\rm ID}(156,7)$.
In  \cite[Theorem 5.5]{BuConCo10} the authors proved that there exists pseudoreal Riemann surfaces with maximal 
full group for infinitely many genera. The groups that they got are non abelian, this inspired us to prove the next result.

\begin{theorem} \label{abeliann} If a pseudoreal Riemann surface $X$ has maximal full group, 
then its conformal automorphism group is not abelian and $X/Z({\rm Aut}(X))$ has positive genus.
\end{theorem}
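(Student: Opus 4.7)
The plan is to combine the NEC description of the maximal case with Corollary \ref{cencor}. By Theorem \ref{maximalll}, the full group $\overline{G}={\rm Aut}^{\pm}(X)$ comes from an NEC group $\Delta$ of signature $(1;-;[2,3])$, so Theorem \ref{5.1} gives its canonical Fuchsian subgroup $\Delta^{+}$ signature $(0;+;[2,2,3,3])$. Hence $G={\rm Aut}(X)$ admits a generating vector $(a_1,a_2,b_1,b_2)$ with $a_i^2=b_i^3=1$ and $a_1a_2b_1b_2=1$.

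First I would rule out the possibility that $G$ is abelian. In that case the identity $a_1a_2=(b_1b_2)^{-1}$ equates an element of order dividing $2$ with one of order dividing $3$, forcing both sides to be trivial; hence $a_2=a_1$, $b_2=b_1^{-1}$, and $G=\langle a_1\rangle\times\langle b_1\rangle\cong C_6$. Then $|\overline{G}|=12=12(g-1)$ would require $g=2$, but no pseudoreal Riemann surface of genus $2$ has full automorphism group of order $12$: this is excluded by the Bujalance--Conder--Costa classification \cite{BuConCo10} and is consistent with the computational observation (recorded just before this theorem) that the minimum genus attaining the bound is $g=14$. This contradiction yields that $G$ is non-abelian.

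The second conclusion then follows from Corollary \ref{cencor}: since $X$ is pseudoreal it cannot be defined over its field of moduli $\mathbb{R}$, so if $X/Z({\rm Aut}(X))$ had genus $0$ the corollary would force $G$ to be abelian, contradicting the previous step. Therefore $X/Z({\rm Aut}(X))$ has positive genus.

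The main obstacle is the exclusion of $g=2$ in the abelian branch. The generating-vector argument cleanly singles out $(G,g)=(C_6,2)$, and a short check shows that among non-split extensions of $C_6$ by $C_2$ only $\overline{G}\cong C_{12}$ is compatible with the NEC relation $a_1b_1d^2=1$ (in the dicyclic alternative $\mathrm{Dic}_3$ every antiholomorphic element squares to the unique central involution, making $a_1b_1d^2$ nontrivial). Eliminating this remaining $C_{12}$ candidate intrinsically, without appealing to the low-genus classification, would be the technical heart of a fully self-contained proof.
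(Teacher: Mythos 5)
Your proof is correct and shares the paper's overall skeleton (reduce to the abelian case via the forced signatures $(1;-;[2,3])$ and $(0;[2,2,3,3])$, then deduce the second claim from Corollary \ref{cencor}), but it handles the abelian case more sharply than the paper does. The paper only extracts from the generating vector that an abelian $G$ must be cyclic, and therefore has to split into two subcases: genus $2$, killed by Table \ref{GEN2}, and $g>2$, killed by observing that a generator would have order $6(g-1)>2g+2$ and invoking Singerman's result \cite[Corollary 1]{Sin74b} that such a surface cannot be pseudoreal. You instead push the abelianization of the relation $a_1a_2b_1b_2=1$ (with exact orders $2,2,3,3$ forced by torsion-freeness of the kernel) to its conclusion $G\cong C_6$, so that $|G|=6(g-1)$ pins down $g=2$ outright and the Singerman citation becomes unnecessary; this is a genuine simplification, and in fact it shows the paper's $g>2$ branch is vacuous. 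Both arguments ultimately lean on the genus-$2$ classification of \cite{BuConCo10} for the remaining case, and your closing caveat correctly identifies why this cannot be avoided cheaply: the signature $(0;[2,2,3,3])$ is on the Singerman list, so the epimorphism $\Delta\to C_{12}$ you exhibit does produce a genus-$2$ surface with an essential $C_{12}$-action without anticonformal involutions, and one must argue that this action always extends to a larger full group (which is what the non-maximality analysis behind Table \ref{GEN2} does). Your treatment of the second conclusion is identical to the paper's.
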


\begin{proof} Suppose that the conformal automorphism group $G$ of $X$ is abelian. 
First observe that $G$ has order $6(g-1)$ and the Fuchsian signature associated to $G$ is $(0;[2,2,3,3])$ (Theorem \ref{maximalll} and Theorem \ref{5.1}). 
By \cite[Theorem 7.1]{BuCiCo02} $G$ is cyclic since otherwise it would be a quotient of $C_2\oplus C_3\cong C_6$ (and thus cyclic).
By Table \ref{GEN2} we know that there is no conformal automorphism group of a pseudoreal Riemann surface of order $6$ in genus $2$, so we can assume $g>2$. However, in this case  $6(g-1)>2g+2$, and a generator of $G$ will be an element of order $>2g+2$. By \cite[Corollary 1]{Sin74b} $X$ is not pseudoreal, contradicting the hypothesis. This proves the first half of the statement.

The second statement follows from the first one and Corollary \ref{cencor}.
\end{proof}

\section{Classification}\label{abc}
 In this section we will recall what are conformal and full automorphism groups of pseudoreal 
 Riemann surfaces up to genus $4$ and we will extend such classification up to genus $10$. \\

\noindent \textbf{Genus 2.} By  \cite[Theorem 2]{CarQuer02} we know that if $X$ is a curve of genus $2$ 
defined over a field of characteristic not equal to $2$, and ${\rm Aut}(X)\not\cong C_2$, 
then $X$ can be defined over its field of moduli. 
In particular, if $X$ is a pseudoreal Riemann surface of genus $2$, then ${\rm Aut}(X)\cong C_2$. 
The latter result was also obtained in \cite[Theorem 4.1]{BuConCo10} via NEC groups and epimorphisms, 
obtaining $C_4$ as the only possible full automorphism group in genus $2$ (see Table \ref{GEN2}).
An algebraic model for a pseudoreal curve of genus $2$ is Earle's example  \cite{E71}
$$X\ :\ y^2=x(x^2-a^2)(x^2+ta^2x-a),$$ where $a=e^{\frac{2\pi i}{3}}$ and $t\in\mathbb{R}^{+}-\{1\}$. \\
 
\noindent \textbf{Genus 3.} 
A hyperelliptic curve of genus three can be defined over its field of moduli unless its automorphism group is isomorphic 
to $C_2\times C_2$ (see \cite[\S 4.5]{LR11}).

In the non-hyperelliptic case, in \cite[Theorem 0.2]{ArtQui12} the authors proved that if $X$ 
is a smooth plane quartic such that ${\rm |Aut}(X)|>4$, then $X$ can be defined over its field of moduli, 
since all the other possible automorphism groups have odd signature.
Moreover, the authors proved that a pseudoreal plane quartic  has automorphism group $C_2$.
The same result was obtained in \cite[Proposition 3.5]{BuCo14} via NEC groups and epimorphisms, obtaining $C_4$ and $C_4\times C_2$ as the only possible full automorphism groups in genus $3$ (see Table \ref{GEN3}). 
 
 The equation of a pseudoreal hyperelliptic curve of genus $3$ is given in \cite[p.~82]{Hugg05}  
 $$X\ :\ y^2=(x^2-a_1)\left(x^2+\frac{1}{\overline{a_1}}\right)(x^2-a_2)\left(x^2+\frac{1}{\overline{a_2}}\right),$$
 where the coefficients must satisfy certain special conditions.
  An explicit pseudoreal plane quartic is  given by 
  $$X\ :\ y^4+y^2(x-a_1z)\left(x+\frac{1}{a_1}z\right)+(x-a_2z)\left(x+\frac{1}{\overline{a_2}}z\right)(x-a_3z)\left(x+\frac{1}{\overline{a_3}}z\right)=0,$$ where $a_1=1, a_2=1-i, a_3=2(i-1)$  \cite[Proposition 4.3]{ArtQui12}. \\
 
\item \textbf{Genus 4.} In \cite[Theorem 4.3]{BuCo14} the authors find that the only possible full 
automorphism groups for pseudoreal Riemann surfaces are $C_4,C_8$ and the Frobenius group $F20$ (see Table \ref{GEN4}). 
An algebraic model for a pseudoreal curve of genus $4$ when ${\rm Aut}^{+}(X)$ is $C_2$ with Fuchsian signature $[0;2^{10}]$ 
has been given by Shimura  \cite{Shi72} 
$$y^2=x^5+(a_1x^6-\overline{a_1}x^4)+(a_2x^7+\overline{a_2}x^3)+(a_3x^8-\overline{a_3}x^2)+(a_4x^9+\overline{a_4}x)+(x^{10}-1),$$ 
and has full group $C_4$, where the coefficients $a_i$ and $\overline{a_j}$ are algebraically independent over $\mathbb{Q}$. When ${\rm Aut}^{+}(X)$ is $C_4$, we have a hyperelliptic example in \cite[p.~82]{Hugg05} given by $$y^2=x(x^4-b_i)\left(x^4+\frac{1}{\overline{b_i}}\right),$$ which has full group $C_8$.

\begin{example}  
We now provide the equations of a non-hyperelliptic pseudoreal curve of genus $4$  
with automorphism group isomorphic to $D_5$. This example was kindly communicated to us by Jeroen Sijsling.
Let $X$ be the complete intersection of the quadric and the cubic in $\pp^3$ defined as the zero sets of the following polynomials:
\[
F_2 := (-3i + 2)(x_1^2+x_2^2+x_3^2+x_4^2) + (9i - 2)(x_1x_2+x_2x_3+x_3x_4) - 6i(x_1x_3 +x_1x_4  + x_2x_4),
\]
\[
F_3 := (-2i + 1)(x_1^2x_2 +x_2^2x_3+x_3^2x_4-x_1x_2^2-x_2x_3^2-x_3x_4^2)+ \]
\[+ 4i(x_1^2x_3 - x_1^2x_4  - x_1x_3^2 + x_1x_4^2  + x_2^2x_4  - x_2x_4^2).
\]
Observe that, since $X$ is canonically embedded, all elements of ${\rm Aut}(X)$ are induced by 
projectivities in ${\rm PGL}(4,\cc)$.
The curve $X$ contains the following elements in its automorphism group
\[
T_1=\left(
\begin{array}{cccc}
-1 &  1&  0&  0\\
-1 &  0 &  1&  0\\
-1 &  0 &  0 &  1\\
-1 &  0 &  0 &  0
\end{array}
\right),\quad 
T_2 = \left(
\begin{array}{cccc}
-1 &  0 &  0 &  0\\
-1 &  0 &  0 & 1\\
-1 &  0 &  1 &  0\\
-1 &  1 &  0 &  0
\end{array}
\right),
\]
which generate a subgroup $G$ of ${\rm Aut}(X)$ isomorphic to the dihedral group $D_5$:
\[
G\to D_5=\langle a,b|a^2,b^5,aba^{-1}b\rangle,\quad T_1\mapsto b,\ T_2\mapsto a.
\]
Actually, $G={\rm Aut}(X)$ and can be proved as follows.
By \cite[Proposition 2.5, II, 1]{AKuIKu90} and \cite[Table 4]{MSSV}, if $G$ were properly contained in  ${\rm Aut}(X)$, then ${\rm Aut}(X)$ would be isomorphic 
to the symmetric group $S_5$ and $G$ to the unique subgroup of $S_5$ isomorphic to $D_5$ 
(which can be generated by $(13)(45)$ and $(12345)$).
Moreover, the normalizer of $G$ in ${\rm Aut}(X)$ would be a group of 
order $20$ containing an element $S$  (corresponding to $(1534)$) which acts by conjugacy on $G$ as $a\mapsto a,\ b\mapsto b^2$.

Observe that the following matrix  acts by conjugacy on $T_1,T_2$ in the same way as  $S$:
\[
M = \left(
\begin{array}{cccc}
1 &  0 &  0&  0\\
1 &  0 & -1 &  1\\
0 &  1 & -1 &  1\\
0 &  1 & -1 &  0.
\end{array}
\right).
\]
This implies that $A=M^{-1}S$ belongs to the centralizer of $G$ in ${\rm PGL}(4,\cc)$.
Since the trace of $T_1$ is not zero and since $T_2$ has order two, the $AT_1=T_1A$ and $AT_2=\pm T_2A$ in ${\rm GL}(4,\cc)$.
An explicit computation with the help of Magma \cite{Magma} allows to identify all such matrices $A$ (which turn out to depend on one parameter) 
and to check that $MA$ never gives an automorphism of $X$.

Observe that the matrix $M$ gives an isomorphism between $X$ and its conjugate $\bar X$.
Finally, one can verify that the product of $M$ with any element of $G$ is not an involution,
showing that $X$ has no anticonformal involutions.
\end{example}

As far as the authors know, there is no explicit example of a pseudoreal Riemann surface of genus $4$ 
with conformal automorphism group $C_2$ with signature $(2;[2,2])$.

\begin{theorem} \label{class} Two finite groups $G$ and $\overline{G}$ are the conformal and full automorphism groups of a pseudoreal Riemann surface $X$ of genus $5\le g\le 10$ if and only if $G={\rm Aut}^{+}(X)$ and $\overline{G}={\rm Aut}^{\pm}(X)$ appear in the corresponding table by genus among Tables \ref{table5}, \ref{table6}, \ref{table7}, \ref{table8}, \ref{table9} and \ref{table10}.
\end{theorem}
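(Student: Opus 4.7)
The plan is to reduce the classification to an algorithmic enumeration, implemented in Magma and based on J. Paulhus's database of group actions on Riemann surfaces up to genus $10$. For each genus $g$ with $5 \le g \le 10$, Paulhus's database supplies every triple $(G, s, \vec{v})$ where $G$ is a finite group, $s = (h; [m_1,\dots,m_r])$ is a Fuchsian signature, and $\vec{v}$ is a generating vector defining an epimorphism $\theta: \Gamma \to G$ with torsion-free kernel, so that $X := \mathbb{H}/\ker(\theta)$ is a Riemann surface of genus $g$ with $G \le \mathrm{Aut}(X)$. The first stage of the algorithm is to select those triples for which $G = \mathrm{Aut}^+(X)$: when $s$ is finitely maximal this holds automatically, while for each $s$ in the Singerman list one checks, for each enlargement listed there, whether $\theta$ extends to a Fuchsian overgroup. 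This maximality filter already sits in Paulhus's data, so we may restrict to pairs $(G,X)$ with $G=\mathrm{Aut}^+(X)$.

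The second stage enumerates, for each such $G$, the candidate full automorphism groups $\bar{G}$ of a pseudoreal Riemann surface whose conformal part is $G$. By Theorem \ref{bagofull}, $\bar{G}$ must be a non-split index-two extension of $G$, and $|G|$ must be even. By Theorem \ref{theorem}, such extensions correspond bijectively to the classes in $E(G) = P(G)/{\sim}$, which we enumerate by running through pairs $(\phi, a) \in \mathrm{Aut}(G)\times G$ satisfying $\phi^2 = \phi_a$ and $\phi(a) = a$, and discarding split classes (those equivalent to $(\phi, e)$). For each surviving class we construct $\bar{G}$ explicitly via Lemma \ref{3.3.2}, determine the proper NEC group $\bar{\Gamma}$ whose canonical Fuchsian subgroup is $\Gamma$ using Theorem \ref{5.1}, and check that $\theta$ extends to an epimorphism $\bar{\theta}:\bar{\Gamma}\to\bar{G}$ with torsion-free kernel whose restriction to $\bar{\Gamma}\setminus\Gamma$ avoids involutions.

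The final stage verifies that $\bar{G}$ is actually $\mathrm{Aut}^{\pm}(X)$, not merely a proper subgroup. If the NEC signature of $\bar{\Gamma}$ is finitely maximal, this is automatic; otherwise we consult the Bujalance--Estévez--Izquierdo NEC Singerman lists of \cite{Buj82, EstIzq06} to read off the finitely many possible NEC enlargements, and apply Lemma \ref{BG4} together with the analogous criteria from \cite{BaGo10} for the signatures $(1;-;[k,l];\{-\})$, $(1;-;[k,k];\{-\})$ and $(2;-;[k];\{-\})$ to test whether each such enlargement is realized. Running this pipeline for each $g \in \{5,\dots,10\}$ and tabulating the surviving pairs $(G,\bar{G})$ yields Tables \ref{table5}--\ref{table10}, and the two directions of the theorem follow from the completeness of Paulhus's list (for necessity) and from the explicit construction of each surviving extension (for sufficiency).

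The main obstacle is precisely the last stage: ensuring the exhaustiveness and correctness of the maximality check on the NEC side. Each non-finitely-maximal NEC signature forces us to test, for every candidate overgroup indicated by the corresponding lemma, whether a prescribed map on the generators of $\bar{G}$ (such as the involution given in Lemma \ref{BG4}) extends to an automorphism of $\bar{G}$; only when none of these maps extends is the extension maximal and the surface pseudoreal. Getting every branching case of this logic right, and translating it faithfully into Magma code that interfaces with Paulhus's database, is the bulk of the work; once this is done the classification follows by letting the machine run over the finitely many triples for $5\le g\le 10$.
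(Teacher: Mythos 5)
Your proposal follows essentially the same route as the paper: extract the triples $(G,s,v)$ from Paulhus's database, pass to the NEC side via Theorem \ref{5.1}, enumerate the non-split degree-two extensions $\overline{G}$ of $G$ (Theorems \ref{bagofull} and \ref{theorem}), test for epimorphisms from the corresponding NEC group with torsion-free kernel, and settle maximality via the finitely maximal/non-finitely maximal dichotomy using the NEC Singerman-type lists together with the Baginski--Gromadzki lemmas and Lemma \ref{BG4}. The one slip is your claim that the conformal-maximality filter ``already sits in Paulhus's data''---the paper explicitly warns that Paulhus's list does \emph{not} guarantee $G$ is the complete automorphism group and treats this as one of the main issues its program must resolve---but since you describe how to carry out that check yourself, this does not affect the substance of the argument.
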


\proof
The classification is obtained through a case by case analysis, carried out by means
of the Magma program which is described in the next section.  
The main steps of the algorithm are the following.
\begin{enumerate}
\item Fixed a genus $5\le g\le 10$ we consider the complete list $L$ of triples $(G,s,v)$, where $G$ is a group acting on 
Riemann surfaces $X$ of genus $g$, $s$ its signature and $v$ the generating vector of the action (see \S 2.2). 
These data are given us by the Magma program of J. Paulhus (see \cite{Pau15}). 
\item From the list $L$ we select only the triples where $G$ has even order and $s$ is an even signature (see  Theorem \ref{5.1}).
\item  We separate the finitely maximal and the non finitely maximal signatures. 
 \item In  case the signature is finitely maximal, 
 we  first consider all the non-split extensions $\overline{G}$ of $G$ by $C_2$ (Theorem \ref{bagofull}). 
 For any such extension we check the existence of an epimorphism $\theta:\Delta\to\overline G$, where $\Delta$ is a NEC group  
whose signature is obtained from $s$ by Theorem \ref{5.1}, and $\ker(\theta)$ is a torsion free Fuchsian group.
If there is one such epimorphism, $G$ and $\bar G$ are the conformal and full automorphism groups respectively of a pseudoreal Riemann surface.
 \item In case the signature is non finitely maximal, we apply Lemmas \cite[Lemma 4.1, Lemma 4.2, Lemma 4.3]{BaGo10} and Lemma \ref{BG4} to identify pseudoreal automorphism groups.  \qed
\end{enumerate}

Observe that for any case in the tables of section \ref{app} there exists a pseudoreal curve with those properties.
However, finding an explicit algebraic model for such curves is a difficult problem.
In \cite{Shi72} G. Shimura  provided hyperelliptic examples for any even genus $g\geq 2$ with automorphism group of order two.
Later the hyperelliptic case has been completely characterized in the following theorem by B. Huggins, which 
completes previous work in \cite{BuTur02}.

\begin{theorem} \cite[Theorem 5.0.5]{Hugg05} Let $X$ be a hyperelliptic curve defined over $\mathbb{C}$ such that $M_{\mathbb{C}\slash\mathbb{R}}(X)=\mathbb{R}$. Then $X$ is pseudoreal  if and only if it is isomorphic to either $y^2=f(x)$, or $y^2=g(x)$, where 
$$f(x)=\prod\limits_{i=1}^r(x^n-a_i)\left(x^n+\frac{1}{\overline{a_i}}\right),\quad g(x)=x\prod\limits_{i=1}^s(x^m-b_i)\left(x^m+\frac{1}{\overline{b_i}}\right),$$ 
$m,n,r,s$ are non negative integers such that $2nr>5$, $sm$ is even, $r$ is odd if $n$ is odd, and $a_i, b_i$ satisfy additional conditions that can be found in \cite[Pag.~82]{Hugg05}. Moreover, these curves have automorphism groups isomorphic to $C_2\times C_n$ and $C_{2n}$, respectively.
\end{theorem}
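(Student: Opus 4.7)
The plan is to leverage Huggins' earlier Theorem~\ref{hug} to reduce the analysis to cyclic quotients, to classify the resulting central extensions, and then to impose pseudoreality by a symmetry analysis of the branch locus on $\mathbb{P}^1=X/\langle\iota\rangle$, where $\iota$ is the hyperelliptic involution.

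Since $X$ is pseudoreal, $M_{\mathbb{C}/\mathbb{R}}(X)=\mathbb{R}$ but $X$ is not definable over $\mathbb{R}$. Applying Theorem~\ref{hug} with $K=\mathbb{R}$ and $F=\mathbb{C}$: if the quotient $G:={\rm Aut}(X)/\langle\iota\rangle$ were any of the non-cyclic finite subgroups of $\mathrm{PGL}(2,\mathbb{C})$ (that is, $D_n$, $A_4$, $S_4$ or $A_5$), then $X$ would be definable over $\mathbb{R}$, a contradiction. Hence $G\cong C_n$ for some $n\ge 1$. Since $\iota$ is central in ${\rm Aut}(X)$, the latter is a central extension of $C_n$ by $C_2$ and therefore is isomorphic either to the split product $C_2\times C_n$ or to the non-split cyclic group $C_{2n}$, matching the two alternatives in the statement.

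Next I would normalize coordinates on $\mathbb{P}^1$ so that a generator of $C_n$ acts as $x\mapsto\zeta_n x$, fixing $0$ and $\infty$. A lift of the generator to ${\rm Aut}(X)$ acts as $(x,y)\mapsto(\zeta_n x,\varepsilon y)$ for some constant $\varepsilon$, and invariance of $y^2=p(x)$ forces $\varepsilon^2 p(x)=p(\zeta_n x)$. In the split case one can take $\varepsilon=1$, so $p(x)$ is a polynomial in $x^n$, giving the shape $y^2=\prod_j(x^n-c_j)$; in the non-split case the relation $\alpha^n=\iota$ forces $\varepsilon^n=-1$, so $\varepsilon=\zeta_{2n}$ and $p(\zeta_n x)=\zeta_n p(x)$, yielding $p(x)=x\cdot q(x^n)$, i.e.\ the shape $y^2=x\prod_j(x^m-d_j)$. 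The pseudoreal condition now furnishes an antiholomorphic automorphism $\sigma$ of $X$ whose image on $\mathbb{P}^1$ normalizes $\langle x\mapsto\zeta_n x\rangle$, hence is of the form $x\mapsto c\bar x$ or $x\mapsto c/\bar x$. After rescaling, and using that $X$ admits no antiholomorphic involution, one is forced into the second shape with $c\notin\mathbb{R}$; the natural normalization $c=e^{i\pi/n}$ makes the square of this map equal to the generator $x\mapsto\zeta_n x$ of $C_n$. Demanding that this antiholomorphic map preserve the branch locus then pairs each $C_n$-orbit of roots of $p$ with the orbit of $-1/\bar\alpha$ for any $\alpha$ in that orbit, producing exactly the factorizations $(x^n-a_i)(x^n+1/\overline{a_i})$ and $(x^m-b_i)(x^m+1/\overline{b_i})$ of the statement.

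The main obstacle will be to exclude \emph{every} antiholomorphic involution, not merely the one coming from the chosen $\sigma$: every coset representative $\sigma\cdot\tau$ with $\tau\in{\rm Aut}(X)$ is itself an antiholomorphic automorphism and must also fail to have order two. This requirement translates into explicit non-degeneracy inequalities among the $a_i$ (respectively $b_i$), which are precisely the ``additional conditions'' alluded to in \cite[p.~82]{Hugg05}. Finally, the numerical constraints $2nr>5$, $sm$ even, and ``$r$ odd if $n$ odd'' will drop out of comparing $\deg p$ to the genus via Riemann--Hurwitz together with the requirement that ${\rm Aut}(X)$ be exactly $C_2\times C_n$ (respectively $C_{2n}$) and not strictly larger; the identification of ${\rm Aut}(X)$ in each case is then immediate from the construction, which settles the last assertion of the theorem.
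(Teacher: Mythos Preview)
The paper does not supply its own proof of this statement: the theorem is quoted as a result of Huggins' thesis \cite[Theorem~5.0.5]{Hugg05} and is stated without argument, serving only as background for the explicit examples discussed in Section~\ref{abc}. So there is nothing in the paper to compare your proposal against.

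Your outline is nonetheless essentially the strategy Huggins follows. The reduction to cyclic reduced automorphism group via Theorem~\ref{hug}, the normalization of the $C_n$-action on $\mathbb{P}^1=X/\langle\iota\rangle$, and the split/non-split dichotomy producing the two shapes $f$ and $g$ are all correct. Your treatment of the antiholomorphic symmetry is also on the right track: the linear form $x\mapsto c\bar x$ can be rescaled to $x\mapsto\bar x$, which forces the branch locus to be conjugation-stable and hence gives a real model, so one is indeed left with $x\mapsto c/\bar x$; the normalization $c=e^{i\pi/n}$ then makes $\sigma^2$ equal to the generator of $C_n$ and produces the pairing $(x^n-a_i)(x^n+1/\overline{a_i})$. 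Two small points: with the standard convention (kernel first) the group ${\rm Aut}(X)$ is a central extension of $C_2$ by $C_n$, not the reverse; and for odd $n$ the groups $C_2\times C_n$ and $C_{2n}$ coincide, so the two families are not disjoint there, which is related to the parity constraint ``$r$ odd if $n$ odd''. The genuinely laborious step, as you correctly flag, is verifying that \emph{every} coset representative $\sigma\tau$ with $\tau\in{\rm Aut}(X)$ fails to be an involution; this case analysis is exactly where the ``additional conditions'' on the $a_i,b_i$ in \cite[p.~82]{Hugg05} arise, and your sketch does not carry it out.
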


In \cite[Theorem 2.]{E71} the author gives an example of a pseudoreal Riemann surface $X$ of genus $5$ 
with an order $4$ anticonformal element called $f$, which generates ${\rm Aut}^{\pm}(X)\cong C_4$ 
(see Figure \ref{fig:awesome_image}). There are exactly $2$ possible conformal actions 
of $C_2$ on pseudoreal Riemann surfaces of genus $5$, having signatures $(3;[-])$ and $(1;[2^8])$. 
Since $f^2$ has no fixed points, the conformal 
action is $C_2$ with signature $(3;[-])$ (see Table \ref{table5}).

\begin{figure}[h!]
\centering
    \includegraphics[width=0.3\textwidth]{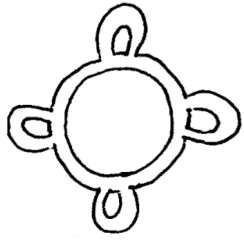}
    \caption{Earle's picture of his genus $5$ example} \label{fig:awesome_image}
\end{figure}

More non-hyperelliptic examples of pseudoreal curves have been introduced by B. Huggins in \cite{Hugg05} 
and later by A. Kontogeorgis for $p$-gonal curves \cite{Kon09}.
In \cite{Hid09} R. Hidalgo found a nice example of a non-hyperelliptic pseudoreal curve of genus $17$ with automorphism group 
$C_2^5$ which is a covering of the genus two example given by Earle.
Recently in \cite{ACHQ16} the authors constructed a tower of Riemann surfaces, going from Earle's example in genus two to 
Hidalgo's example in genus $17$, which contains non-hyperelliptic examples of genus $5$ and $9$. 
 
Finally, we apply Theorem \ref{class} to the classification of pseudoreal plane quintics.

\begin{corollary} \label{quinticaaa} Two finite groups $G$ and $\overline{G}$ are 
the conformal and full automorphism groups of a pseudoreal plane quintic $X$ if and only 
if $G={\rm Aut}^{+}(X)$ and $\overline{G}={\rm Aut}^{\pm}(X)$ are in a row of Table \ref{pseudoquintic}.

\begin{table}[!h]
\centering\renewcommand{\arraystretch}{1.3}\setlength{\tabcolsep}{2pt}
\medskip
    \begin{tabular}{| c | c | c | c | p{3.5cm} |}
    \hline
    $\rm Aut^{+}(X)$ & Fuchsian signature & $\rm Aut^{\pm}(X)$ & NEC signature & Generating Vector \\ \hline    
    $C_4$ & $(0;4^6)$ & $C_8$ & $(1;-;[4^3])$ & $(a;a^2,a^2,a^2)$ \\ \hline    
    $C_2$ & $(2;[2^6])$ & $C_4$ & $(3;-;[2^3])$ & $(a,a,a;a^2,a^2,a^2)$ \\ \hline 
     \end{tabular}
      \vspace{0.5cm}
     \caption{Possible automorphism groups for pseudoreal plane quintics}
\label{pseudoquintic}
\end{table}
\end{corollary}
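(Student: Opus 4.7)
The plan is to derive this classification from Theorem \ref{class} by specializing to genus $6$ (the genus of a smooth plane quintic being $(5-1)(5-2)/2=6$) and then cutting down the list of pseudoreal pairs $(G,\bar G)$ in Table \ref{table6} to those realizable by a smooth plane quintic. Every pseudoreal plane quintic $X$ has genus $6$, so the pair of its conformal and full automorphism groups must appear as an entry of Table \ref{table6}. Conversely, not every pseudoreal Riemann surface of genus $6$ admits a plane quintic model (it could be hyperelliptic, trigonal, or bielliptic), so additional restrictions are needed.

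The main additional input is the classical fact that the full automorphism group of a smooth plane curve of degree $d\geq 4$ acts by linear (resp.\ semilinear, when antiholomorphic elements are involved) transformations of $\mathbb{P}^2$, and hence embeds into ${\rm PGL}(3,\mathbb{C})$ (resp.\ its natural $\mathbb{Z}/2$-extension). Moreover, a smooth non-hyperelliptic, non-trigonal, non-bielliptic curve of genus $6$ is a plane quintic if and only if it carries a (necessarily unique) linear series $g^2_5$ cut out by lines. Combining the Mitchell--Blichfeldt classification of finite subgroups of ${\rm PGL}(3,\mathbb{C})$ with the Riemann--Hurwitz data read off from the signatures in Table \ref{table6}, I would go through the table entry by entry and discard any pair $(G,\bar G)$ whose group does not embed into the relevant extension of ${\rm PGL}(3,\mathbb{C})$, or whose signature forces $X$ to be hyperelliptic, trigonal, or bielliptic (for instance by exhibiting an index $2$ or $3$ subgroup whose quotient has too small a genus).

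I expect that exactly the two rows of Table \ref{pseudoquintic} survive this filtering. For each of these surviving cases I would exhibit an explicit pseudoreal smooth plane quintic realizing the prescribed full automorphism group acting projectively on $\mathbb{P}^2$, and verify that the associated generating vector is the one obtained, via Theorem \ref{5.1}, from the corresponding generating vector in Table \ref{table6}. The main obstacle will be the second step: for a given $(G,\bar G)$ in Table \ref{table6}, cleanly distinguishing between \emph{$X$ being abstractly a genus $6$ Riemann surface with this automorphism data} and \emph{$X$ admitting a plane quintic model compatible with this data}. This requires combining the group-theoretic embedding into ${\rm PGL}(3,\mathbb{C})$ with geometric gonality checks that exclude the competing hyperelliptic, trigonal, and bielliptic structures; fortunately, the small number of entries in Table \ref{table6} makes the case analysis tractable, and it can be cross-checked with the Magma routines developed in Section \ref{abc}.
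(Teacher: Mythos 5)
Your overall strategy --- restrict Table \ref{table6} to those pairs $(G,\bar G)$ that are realizable on a smooth plane quintic --- is the same starting point as the paper, which intersects Table \ref{table6} with the Badr--Bars classification of automorphism groups of smooth plane quintics (you propose to rederive that classification via ${\rm PGL}(3,\mathbb{C})$ and Riemann--Hurwitz, which is fine). But there is a genuine gap at the decisive step. Your filtering criteria (embeddability of $\bar G$ into the relevant extension of ${\rm PGL}(3,\mathbb{C})$, and gonality checks ruling out hyperelliptic/trigonal/bielliptic signatures) do \emph{not} cut the list down to the two rows of Table \ref{pseudoquintic}: the pair $(D_5, F20)$ with Fuchsian signature $(0;[2^6])$ both occurs in Table \ref{table6} \emph{and} is realized by an honest family of smooth plane quintics, namely $x^5+y^5+z^5+ax^2yz^2+bxy^3z=0$. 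So three rows survive your filter, not two. The implicit assumption that ``$(G,\bar G)$ appears in Table \ref{table6}'' together with ``$(G,\bar G)$ acts on some plane quintic with the matching signature'' implies ``some pseudoreal plane quintic has this data'' is false: the genus-$6$ Riemann surfaces witnessing the pseudoreal pair $(D_5,F20)$ need not be the same curves as the plane quintics carrying a $D_5$-action.

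The missing ingredient is a curve-by-curve argument excluding the $D_5$ case, and it is not group-theoretic. The paper does this as follows: for a plane quintic $X$ with ${\rm Aut}(X)\cong D_5$, any isomorphism $f:X\to\bar X$ is induced by a $3\times 3$ matrix which must permute the fixed points of the order-$5$ subgroup; since these are the three fundamental points of $\mathbb{P}^2$ for both $X$ and $\bar X$, one normalizes $f$ (up to composing with an automorphism of $X$) to a diagonal matrix ${\rm diag}(1,\epsilon_5^m,\epsilon_5^n)$, and then checks directly that $(J\circ f)^2={\rm id}$, so $X$ carries an anticonformal involution and is not pseudoreal. Without an argument of this kind --- which uses the explicit equations and the rigidity of the $C_5$-fixed locus, not just signatures and subgroup structure --- your proof cannot reach the stated conclusion.
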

\vspace{-0.3cm}

\begin{proof} 
In \cite{BaBa16} the authors classified the automorphism groups of plane quintics defined 
over an algebraically closed field of characteristic zero, giving a smooth plane model 
for every group.
This classification, together with Table \ref{table6}, gives a list of the possible conformal and 
full groups of pseudoreal plane quintics. The possible conformal automorphisms groups 
are $D_5, C_2, C_4$:
\begin{enumerate}
\item ${\rm Aut}(X)\cong D_{5}$, with group generators $\sigma_1([x:y:z])= [x:\epsilon_5y:\epsilon_5^2z]$ and $\sigma_2([x:y:z])= [z:y:x]$, 
with a smooth plane model $$x^5+y^5+z^5+ax^2yz^2+bxy^3z=0,$$ 
where $a,b\in\cc$ with $(a,b)\neq (0,0)$. In this case the covering $X\longrightarrow X\slash {\rm Aut}(X)$ has signature $(0;[2^6])$.
\item ${\rm Aut}(X)\cong C_4$, with generator $\sigma([x:y:z])=[x:y:\epsilon_4z]$, with smooth plane model 
$$z^4L_{1,z}+L_{5,z}=0,$$ 
where $L_{i,z}\in \cc[x,y]$ are homogeneous polynomials with $\deg(L_{i,z})=i$.
In this case the covering $X\longrightarrow X\slash {\rm Aut}(X)$ has signature $(0;[4^6])$.
\item ${\rm Aut}(X)\cong C_2$, with generator $\sigma([x:y:z])=[x:y:-z]$, with smooth plane model 
$$z^4L_{1,z}+z^2L_{3,z}+L_{5,z}=0,$$ 
where $L_{i,z}\in \cc[x,y]$ are homogeneous polynomials with $\deg(L_{i,z})=i$.
In this case the covering $X\longrightarrow X\slash {\rm Aut}(X)$ has signature $(2;[2^6])$.
\end{enumerate}

We now prove that case (i) cannot occur. 
Suppose we have an isomorphism $f$ between $X$ and $\bar X$.
Then $f$ must have a representation as a $3\times 3$ matrix (see \cite{BaBa16}) 
and it must send the fixed points of the cyclic subgroup of order $5$ of ${\rm Aut}(X)$ to the same points 
for $\bar X$.  Such cyclic subgroup is generated by $\sigma_1$ for both curves and its fixed points are the fundamental points, 
so the matrix representing $f$ must be the composition of a permutation matrix with a diagonal matrix.  
Since $[x:y:z]\mapsto [z:y:x]$ is the only non-trivial permutation allowed, then a matrix representation 
for $f$, up to composing with an automorphism of $X$, is the following
$$\begin{bmatrix}
       1 & 0 & 0           \\[0.3em]
       0 & \epsilon_5^m & 0 \\[0.3em]
       0 & 0 & \epsilon_5^n
     \end{bmatrix},
$$  
where $\epsilon_5$ is a primitive fifth root of unity and $m,n$ are integers.
 Since $(J\circ f)^2={\rm id}$, the curve $X$ admits an anticonformal involution, thus it is not pseudoreal.
\end{proof}

\section{A Magma program} \label{prog}
 In this section we will present the Magma \cite{Magma}  program that we used to carry out the classification of full 
automorphism groups of pseudoreal Riemann surfaces done in the previous section.
Our program relies on Jennifer Paulhus' program {\bf GenVectMagmaToGenus20}, which is available at
\begin{center}
 \url{http://www.math.grinnell.edu/~paulhusj/monodromy.html} 
 \end{center}
 and is based on the paper \cite{Pau15}.
To run our program one first needs to download the packages 
{\bf genvectors.m}, {\bf searchroutines.m}, {\bf GenVectMagmaToGenus20} 
and save all of them in the same folder.
To access the data in Paulhus' program, for example for genus $4$, one has to write in Magma 

\begin{verbatim}
load "genvectors.m";
load "searchroutines.m";
L:=ReadData("Fullg20/grpmono04"€, test);
\end{verbatim}
 
where  ``test'' is a function taking as input a permutation group,
a signature (as a vector) and a generating vector (as a vector whose entries are permutations). 
For example when using the following function, the program gives the list of all triples $(G,s,v)$, where $G$ is a group of order bigger than $7$ acting 
on a Riemann surface of genus $4$ with signature $s$ and generating vector $v$.

\begin{verbatim}
test:=function(G,s,Lmonod)
 return  Order(G) gt 7;
end function;
\end{verbatim}

Thus this program allows to analyse the automorphisms groups of all Riemann surfaces up to genus $20$,
looking for certain properties specified by the function ``test''.\footnote{The data was computed using Magma V2.19-9. In newer versions of Magma an error may be returned for some genera (at least $5$ and $9$). 
See the warning in J. Paulhus' webpage.}
Observe that $G$ is not necessarily the complete automorphism group of some Riemann surface of the chosen genus (this will be one of the main issues in our program).

Given a genus $2\leq g\leq 20$ our program describes the automorphism group of all 
the pseudoreal Riemann surfaces of genus $g$. More precisely it gives the full automorphism group, 
the conformal automorphism group and its Fuchsian signature.
For each entry of the output there exists a pseudoreal Riemann surface of genus $g$ 
with such properties.
To run the program one needs to download the file \textbf{pseudoreal.m}, which is available here 
\begin{center}
\url{https://www.dropbox.com/s/k786b7a2vrmt22i/pseudoreal.m?dl=0}
\end{center}
and save it in the same folder as Paulhus' programs.
The program (again in the case of genus $4$) consists of the following lines  

\begin{verbatim}
load "genvectors.m"; 
load "searchroutines.m";
load "pseudoreal.m";
L:=ReadData("Fullg20/grpmono04", testpr);
PR(L);
\end{verbatim}

The output is a list whose entries are of the form $\langle \langle \,,\,\rangle, \langle \,,\,\rangle, [\dots]\rangle$,
where the first bracket contains the ID number of the full automorphism group, the second bracket contains the ID number 
of the conformal automorphism group and the final sequence is the corresponding Fuchsian signature (the first entry is the genus of the quotient 
by the conformal automorphism group).
A description of each of the functions contained in pseudoreal.m can be found in \cite{Rey16}.

 \begin{bibdiv}
\begin{biblist}

\bib{ACHQ16}{article}{
    AUTHOR = {Artebani, M.},
    AUTHOR = {Carvacho, M.},
    AUTHOR = {Hidalgo, R. A.},
    AUTHOR = {Quispe, S.},
     TITLE = {Tower of Riemann Surfaces which cannot be defined over their field of Moduli},
   JOURNAL = {Glasgow Mathematical Journal},
      YEAR = {2016},
     PAGES = {1--15},
       }
       
\bib{ArtQui12}{article}{
    AUTHOR = {Artebani, M.},
    AUTHOR = {Quispe, S.},
     TITLE = {Fields of moduli and fields of definition of odd signature
              curves},
   JOURNAL = {Arch. Math. (Basel)},
  FJOURNAL = {Archiv der Mathematik},
    VOLUME = {99},
      YEAR = {2012},
    NUMBER = {4},
     PAGES = {333--344},
      ISSN = {0003-889X},
     CODEN = {ACVMAL},
   MRCLASS = {14H37},
  MRNUMBER = {2990152},
MRREVIEWER = {Francesco Polizzi},
       URL = {http://dx.doi.org/10.1007/s00013-012-0427-6},
       }

\bib{BaGo10}{article}{
    AUTHOR = {Baginski, C.},
    AUTHOR = {Gromadzki, G.},
     TITLE = {Minimal genus problem for pseudo-real Riemann surfaces},
   JOURNAL = {Arch. Math. (Basel)},
  FJOURNAL = {Archiv der Mathematik},
    VOLUME = {95},
      YEAR = {2010},
    NUMBER = {5},
     PAGES = {481--492},
       }

       \bib{Bars05}{article}{
    AUTHOR = {Bars, F.},
     TITLE = {Automorphism groups of genus 3 curves},
   JOURNAL = {Number Theory Seminar UAB-UB-UPC on Genus 3 curves. Barcelona},
      YEAR = {2005},
       }

\bib{Magma}{article}{
    AUTHOR = {Bosma, W.},
    AUTHOR = {Cannon, J.},
    AUTHOR = {Playoust, C.},
     TITLE = {The {M}agma algebra system. {I}. {T}he user language},
      NOTE = {Computational algebra and number theory (London, 1993)},
   JOURNAL = {J. Symbolic Comput.},
    VOLUME = {24},
      YEAR = {1997},
    NUMBER = {3-4},
     PAGES = {235--265}
}

\bib{BaBa16}{article}{
    AUTHOR = {Badr, E.},
    AUTHOR = {Bars, F.},
     TITLE = {Automorphism {G}roups of {N}onsingular {P}lane {C}urves of {D}egree 5},
   JOURNAL = {Comm. Algebra},
  FJOURNAL = {Communications in Algebra},
    VOLUME = {44},
      YEAR = {2016},
    NUMBER = {10},
     PAGES = {4327--4340},
      ISSN = {0092-7872},
   MRCLASS = {14H37 (14H45 14H50)},
  MRNUMBER = {3508302},
       URL = {http://dx.doi.org/10.1080/00927872.2015.1087547},
}
       
    \bib{Buj82}{article}{
    AUTHOR = {Bujalance, E.},
     TITLE = {Normal N.E.C. signatures},
   JOURNAL = {Illinois J. Math.},
    VOLUME = {26},
      YEAR = {1982},
     PAGES = {519--530},
       }   

\bib{BuCiCo02}{article}{
    AUTHOR = {Bujalance, E.},
    AUTHOR = {Cirre, F. J.},
    AUTHOR = {Conder, M.},
     TITLE = {On extendability of group actions on compact {R}iemann
              surfaces},
   JOURNAL = {Trans. Amer. Math. Soc.},
  FJOURNAL = {Transactions of the American Mathematical Society},
    VOLUME = {355},
      YEAR = {2003},
    NUMBER = {4},
     PAGES = {1537--1557 (electronic)},
      ISSN = {0002-9947},
     CODEN = {TAMTAM},
   MRCLASS = {20H10 (14H55 30F10)},
  MRNUMBER = {1946404},
MRREVIEWER = {Grzegorz Gromadzki},
       URL = {http://dx.doi.org/10.1090/S0002-9947-02-03184-7},
       }

\bib{BS}{book}{
    AUTHOR = {Bujalance, E.},
    AUTHOR = {Cirre, F. J.},
    AUTHOR = {Gamboa, J. M.},
    AUTHOR = {Gromadzki, G.},
     TITLE = {Symmetries of compact Riemann surfaces},
    SERIES = {Lecture Notes in Mathematics},
    VOLUME = {2007},
 PUBLISHER = {Springer-Verlag},
   ADDRESS = {Berlin},
      YEAR = {2010},
}

\bib{BuCo14}{article}{
    AUTHOR = {Bujalance, E.},
    AUTHOR = {Costa, A. F.},
     TITLE = {Automorphism groups of pseudo-real Riemann surfaces of low genus},
   JOURNAL = {Acta Math. Sin (Engl. Ser.)},
    VOLUME = {30},
      YEAR = {2014},
    NUMBER = {1},
     PAGES = {11--22},
       }

\bib{BuConCo10}{article}{
    AUTHOR = {Bujalance, E.},
    AUTHOR = {Conder, M.},
    AUTHOR = {Costa, A. F.},
     TITLE = {Pseudo-real {R}iemann surfaces and chiral regular maps},
   JOURNAL = {Trans. Amer. Math. Soc.},
  FJOURNAL = {Transactions of the American Mathematical Society},
    VOLUME = {362},
      YEAR = {2010},
    NUMBER = {7},
     PAGES = {3365--3376},
      ISSN = {0002-9947},
     CODEN = {TAMTAM},
   MRCLASS = {30F10 (14H55 30F50 57M15)},
  MRNUMBER = {2601593},
MRREVIEWER = {Jose Manuel Gamboa},
       URL = {http://dx.doi.org/10.1090/S0002-9947-10-05102-0},
       }

\bib{BK}{book}{
    AUTHOR = {Bujalance, E.},
    AUTHOR = {Etayo, J.J.},
    AUTHOR = {Gamboa, J. M.},
    AUTHOR = {Gromadzki, G.},
     TITLE = {Automorphism groups of compact bordered Klein surfaces. A combinatorial approach},
    SERIES = {Lecture Notes in Mathematics},
    VOLUME = {1439},
 PUBLISHER = {Springer},
   ADDRESS = {Berlin},
      YEAR = {1990},
}

\bib{BuTur02}{article}{
    AUTHOR = {Bujalance, E.},
    AUTHOR = {Turbek, P.},
     TITLE = {Asymmetric and pseudo-symmetric hyperelliptic surfaces},
   JOURNAL = {Manuscripta Math.},
  FJOURNAL = {Manuscripta Mathematica},
    VOLUME = {108},
      YEAR = {2002},
    NUMBER = {1},
     PAGES = {1--11},
       }

\bib{CarQuer02}{article}{
AUTHOR = {Cardona, G.},
AUTHOR = {Quer, J.},
     TITLE = {Field of moduli and field of definition for curves of genus 2},
 BOOKTITLE = {Computational aspects of algebraic curves},
    SERIES = {Lecture Notes Ser. Comput.},
    VOLUME = {13},
     PAGES = {71--83},
 PUBLISHER = {World Sci. Publ., Hackensack, NJ},
      YEAR = {2005},
   MRCLASS = {14H10},
  MRNUMBER = {2181874},
MRREVIEWER = {Tanush Shaska},
       URL = {http://dx.doi.org/10.1142/9789812701640_0006},
}

\bib{Coh04}{book}{
    AUTHOR = {Adem, A.},
    AUTHOR = {Milgram, J. R.},
     TITLE = {Cohomology of Finite Groups},
    SERIES = {Grundlehren der mathematischen Wissenschaften},
    VOLUME = {309},
 PUBLISHER = {Springer},
   ADDRESS = {Berlin},
      YEAR = {2004},
}

\bib{DebEms99}{article}{
AUTHOR = {D{\`e}bes, P.}
AUTHOR = {Emsalem, M.}
     TITLE = {On fields of moduli of curves},
   JOURNAL = {J. Algebra},
  FJOURNAL = {Journal of Algebra},
    VOLUME = {211},
      YEAR = {1999},
    NUMBER = {1},
     PAGES = {42--56},
      ISSN = {0021-8693},
   MRCLASS = {14H25 (14H30)},
  MRNUMBER = {1656571},
MRREVIEWER = {Helmut V{\"o}lklein},
       URL = {http://dx.doi.org/10.1006/jabr.1998.7586},
       }
       
\bib{E71}{article}{
AUTHOR = {Earle, C. J.},
     TITLE = {On the moduli of closed {R}iemann surfaces with symmetries},
 BOOKTITLE = {Advances in the theory of riemann surfaces ({P}roc. {C}onf., {S}tony {B}rook, {N}.{Y}., 1969)},
     PAGES = {119--130. Ann. of Math. Studies, No. 66},
 PUBLISHER = {Princeton Univ. Press, Princeton, N.J.},
      YEAR = {1971},
   MRCLASS = {30A48 (58D15)},
  MRNUMBER = {0296282},
MRREVIEWER = {L. Keen},
}

\bib{EstIzq06}{article}{
    AUTHOR = {Est{\'e}vez, J. L.},
    AUTHOR = {Izquierdo, M.},
     TITLE = {Non-normal pairs of non-{E}uclidean crystallographic groups},
   JOURNAL = {Bull. London Math. Soc.},
  FJOURNAL = {The Bulletin of the London Mathematical Society},
    VOLUME = {38},
      YEAR = {2006},
    NUMBER = {1},
     PAGES = {113--123},
      ISSN = {0024-6093},
     CODEN = {LMSBBT},
   MRCLASS = {20H15 (20H10 30F10)},
  MRNUMBER = {2201610},
MRREVIEWER = {Ernesto Mart{\'{\i}}nez},
       URL = {http://dx.doi.org/10.1112/S0024609305017984},
       }

\bib{Gre63}{article}{
    AUTHOR = {Greenberg, L.},
     TITLE = {Maximal {F}uchsian groups},
   JOURNAL = {Bull. Amer. Math. Soc.},
  FJOURNAL = {Bulletin of the American Mathematical Society},
    VOLUME = {69},
      YEAR = {1963},
     PAGES = {569--573},
      ISSN = {0002-9904},
   MRCLASS = {10.21 (20.65)},
  MRNUMBER = {0148620},
MRREVIEWER = {J. R. Smart},
}

\bib{HamHerr03}{article}{
AUTHOR = {Hammer, H.},
AUTHOR=  {Herrlich, F.},
     TITLE = {A remark on the moduli field of a curve},
   JOURNAL = {Arch. Math. (Basel)},
  FJOURNAL = {Archiv der Mathematik},
    VOLUME = {81},
      YEAR = {2003},
    NUMBER = {1},
     PAGES = {5--10},
      ISSN = {0003-889X},
     CODEN = {ACVMAL},
   MRCLASS = {14H25 (11G30)},
  MRNUMBER = {2002711},
       URL = {http://dx.doi.org/10.1007/s00013-003-4649-5},
       }

\bib{Hid09}{article}{
    AUTHOR = {Hidalgo, R. A.},
     TITLE = {Non-hyperelliptic Riemann surfaces with real field of moduli but not definable over the reals},
   JOURNAL = {Arch. Math.},
    VOLUME = {93},
      YEAR = {2009},
     PAGES = {219--222},
       }

\bib{HidQui15}{article}{
    AUTHOR = {Hidalgo, R. A.},
    AUTHOR = {Quispe, S.},
     TITLE = {Fields of moduli of some special curves},
   JOURNAL = {J. Pure Appl. Algebra},
  FJOURNAL = {Journal of Pure and Applied Algebra},
    VOLUME = {220},
      YEAR = {2016},
    NUMBER = {1},
     PAGES = {55--60},
      ISSN = {0022-4049},
   MRCLASS = {14H45 (14H37)},
  MRNUMBER = {3393450},
       URL = {http://dx.doi.org/10.1016/j.jpaa.2015.05.042},
       }

\bib{Hugg04}{article}{
    AUTHOR = {Huggins, B.},
     TITLE = {Fields of moduli of hyperelliptic curves},
   JOURNAL = {Math. Res. Lett.},
  FJOURNAL = {Mathematical Research Letters},
    VOLUME = {14},
      YEAR = {2007},
    NUMBER = {2},
     PAGES = {249--262},
      ISSN = {1073-2780},
   MRCLASS = {14H45 (14G27 14H37)},
  MRNUMBER = {2318623},
MRREVIEWER = {Aristides I. Kontogeorgis},
       URL = {http://dx.doi.org/10.4310/MRL.2007.v14.n2.a8},
       }
       
       \bib{Hugg05}{article}{
    AUTHOR = {Huggins, B.},
     TITLE = {Fields of Moduli and Fields of Definition of Curves},
   JOURNAL = {Ph.D. Thesis. UCLA},
      YEAR = {2005},
       }
       
\bib{Hur93}{article}{
    AUTHOR = {Hurwitz, A.},
     TITLE = {Uber algebraische Gebilde mit eideuntigen Transformationen in sich},
   JOURNAL = {Math. Ann.},
    VOLUME = {41},
      YEAR = {1893},
     PAGES = {403--442},
       }

\bib{Kon09}{article}{
    AUTHOR = {Kontogeorgis, A.},
     TITLE = {Field of moduli versus field of definition for cyclic covers
              of the projective line},
   JOURNAL = {J. Th\'eor. Nombres Bordeaux},
  FJOURNAL = {Journal de Th\'eorie des Nombres de Bordeaux},
    VOLUME = {21},
      YEAR = {2009},
    NUMBER = {3},
     PAGES = {679--692},
      ISSN = {1246-7405},
   MRCLASS = {14H37 (14H30)},
  MRNUMBER = {2605539},
MRREVIEWER = {Meirav Topol},
       URL = {http://jtnb.cedram.org/item?id=JTNB_2009__21_3_679_0},       
}

\bib{AKuIKu90}{article}{
    AUTHOR = {Kuribayashi, I.},
    AUTHOR = {Kuribayashi, A.},
     TITLE = {Automorphism groups of compact {R}iemann surfaces of genera
              three and four},
   JOURNAL = {J. Pure Appl. Algebra},
  FJOURNAL = {Journal of Pure and Applied Algebra},
    VOLUME = {65},
      YEAR = {1990},
    NUMBER = {3},
     PAGES = {277--292},
      ISSN = {0022-4049},
     CODEN = {JPAAA2},
   MRCLASS = {30F35 (14H55 20F29)},
  MRNUMBER = {1072285},
MRREVIEWER = {William Harvey},
       URL = {http://dx.doi.org/10.1016/0022-4049(90)90107-S},
       }

      \bib{LR11}{article}{
    AUTHOR = {Lercier, R.}, 
    AUTHOR= {Ritzenthaler, C.},
     TITLE = {Hyperelliptic curves and their invariants: geometric,
              arithmetic and algorithmic aspects},
   JOURNAL = {J. Algebra},
  FJOURNAL = {Journal of Algebra},
    VOLUME = {372},
      YEAR = {2012},
     PAGES = {595--636},
      ISSN = {0021-8693},
   MRCLASS = {14H45 (13A50 14H10 14Q05)},
  MRNUMBER = {2990029},
MRREVIEWER = {Haohao Wang},
       URL = {http://dx.doi.org/10.1016/j.jalgebra.2012.07.054},
}
       
  \bib{LRS13}{article}{
    AUTHOR = {Lercier, R.}, 
    AUTHOR= {Ritzenthaler, C.}, 
    AUTHOR={Sijsling, J.},
     TITLE = {Explicit {G}alois obstruction and descent for hyperelliptic
              curves with tamely cyclic reduced automorphism group},
   JOURNAL = {Math. Comp.},
  FJOURNAL = {Mathematics of Computation},
    VOLUME = {85},
      YEAR = {2016},
    NUMBER = {300},
     PAGES = {2011--2045},
      ISSN = {0025-5718},
     CODEN = {MCMPAF},
   MRCLASS = {14H30 (14H37 14H45)},
  MRNUMBER = {3471117},
       URL = {http://dx.doi.org/10.1090/mcom3032},
}

\bib{MaVa80}{article}{     
   AUTHOR = {Madan, M. L.},
      AUTHOR = {Valentini, R. C.},
     TITLE = {A hauptsatz of {L}. {E}. {D}ickson and {A}rtin-{S}chreier
              extensions},
   JOURNAL = {J. Reine Angew. Math.},
  FJOURNAL = {Journal f\"ur die Reine und Angewandte Mathematik},
    VOLUME = {318},
      YEAR = {1980},
     PAGES = {156--177},
      ISSN = {0075-4102},
     CODEN = {JRMAA8},
   MRCLASS = {12F20},
  MRNUMBER = {579390},
MRREVIEWER = {Sudesh Khanduja},
}

\bib{MSSV}{article}{
    AUTHOR = {Magaard, K.},
    AUTHOR = {Shaska, T.},
    AUTHOR = {Shpectorov, S.},
    AUTHOR =  {V{\"o}lklein, H.},
     TITLE = {The locus of curves with prescribed automorphism group},
      NOTE = {Communications in arithmetic fundamental groups (Kyoto,
              1999/2001)},
   JOURNAL = {S\=urikaisekikenky\=usho K\=oky\=uroku},
  FJOURNAL = {S\=urikaisekikenky\=usho K\=oky\=uroku},
    NUMBER = {1267},
      YEAR = {2002},
     PAGES = {112--141},
   MRCLASS = {14H37},
  MRNUMBER = {1954371},
}

\bib{Pau15}{article}{
AUTHOR = {Paulhus, J.},
TITLE = {Branching data for curves up to genus $48$}
YEAR = {2015},
EPRINT = {http://arxiv.org/pdf/1512.07657v1.pdf},
       }
       
\bib{Rey16}{article}{
AUTHOR = {Reyes, C.},
     TITLE = {Pseudoreal Riemann surfaces of small genus},
JOURNAL = {Master Thesis. Universidad de Concepci\'on, Chile},
      YEAR = {2016},
}

\bib{Shi72}{article}{
AUTHOR = {Shimura, G.},
     TITLE = {On the field of rationality for an abelian variety},
   JOURNAL = {Nagoya Math. J.},
  FJOURNAL = {Nagoya Mathematical Journal},
    VOLUME = {45},
      YEAR = {1972},
     PAGES = {167--178},
      ISSN = {0027-7630},
   MRCLASS = {14K15 (14G05 14K10)},
  MRNUMBER = {0306215},
MRREVIEWER = {M. J. Greenberg},
}

\bib{Silv}{book}{
 AUTHOR = {Silverman, J. H.},
     TITLE = {The arithmetic of elliptic curves},
    SERIES = {Graduate Texts in Mathematics},
    VOLUME = {106},
   EDITION = {Second},
 PUBLISHER = {Springer, Dordrecht},
      YEAR = {2009},
     PAGES = {xx+513},
      ISBN = {978-0-387-09493-9},
   MRCLASS = {11-02 (11G05 11G20 14H52 14K15)},
  MRNUMBER = {2514094},
MRREVIEWER = {Vasil{\cprime} {\=I}. Andr{\={\i}}{\u\i}chuk},
       URL = {http://dx.doi.org/10.1007/978-0-387-09494-6},
}

\bib{Sin72}{article}{
    AUTHOR = {Singerman, D.},
     TITLE = {Finitely maximal {F}uchsian groups},
   JOURNAL = {J. London Math. Soc. (2)},
  FJOURNAL = {Journal of the London Mathematical Society. Second Series},
    VOLUME = {6},
      YEAR = {1972},
     PAGES = {29--38},
      ISSN = {0024-6107},
   MRCLASS = {30A58},
  MRNUMBER = {0322165},
MRREVIEWER = {A. M. Macbeath},
}

\bib{Sin74}{article}{
    AUTHOR = {Singerman, D.},
     TITLE = {On the structure of non-{E}uclidean crystallographic groups},
   JOURNAL = {Proc. Cambridge Philos. Soc.},
    VOLUME = {76},
      YEAR = {1974},
     PAGES = {233--240},
   MRCLASS = {20H15},
  MRNUMBER = {0348003},
MRREVIEWER = {F. A. Sherk},
}

\bib{Sin74b}{article}{
AUTHOR = {Singerman, D.},
     TITLE = {Symmetries of {R}iemann surfaces with large automorphism
              group},
   JOURNAL = {Math. Ann.},
  FJOURNAL = {Mathematische Annalen},
    VOLUME = {210},
      YEAR = {1974},
     PAGES = {17--32},
      ISSN = {0025-5831},
   MRCLASS = {30A58},
  MRNUMBER = {0361059},
MRREVIEWER = {William Harvey},
}

\bib{Sin80}{article}{
AUTHOR = {Singerman, D.},
     TITLE = {Symmetries and Pseudosymmetries of hyperelliptic surfaces},
   JOURNAL = {Glasgow Math.},
    VOLUME = {21},
      YEAR = {1980},
     PAGES = {39--49},
}

\bib{Weil56}{article}{
AUTHOR = {Weil, A.},
     TITLE = {The field of definition of a variety},
   JOURNAL = {Amer. J. Math.},
  FJOURNAL = {American Journal of Mathematics},
    VOLUME = {78},
      YEAR = {1956},
     PAGES = {509--524},
      ISSN = {0002-9327},
   MRCLASS = {14.0X},
  MRNUMBER = {0082726},
MRREVIEWER = {P. Samuel},
 }

\bib{Wolfart}{book}{
    AUTHOR = {Jones, G.A.}, 
    AUTHOR = {Wolfart, J.},
     TITLE = {Dessins d'enfants on {R}iemann surfaces},
    SERIES = {Springer Monographs in Mathematics},
 PUBLISHER = {Springer, Cham},
      YEAR = {2016},
     PAGES = {xiv+259},
      ISBN = {978-3-319-24709-0; 978-3-319-24711-3},
   MRCLASS = {14H57 (05C25 11G32 30F10 57M15)},
  MRNUMBER = {3467692},
MRREVIEWER = {Ariyan Javanpeykar},
       URL = {http://dx.doi.org/10.1007/978-3-319-24711-3},
}

\newpage

\vspace{2cm}
\section*{Appendix}\label{app}
\vspace{2cm}

\begin{table}[!h]
\centering\renewcommand{\arraystretch}{1.3}\setlength{\tabcolsep}{2pt}

    \begin{tabular}{| c | c | c | c | p{3.5cm} |}
    \hline
    \multicolumn{5}{ |c| }{Genus $2$} \\
    \hline
    $\rm Aut^{+}(X)$ & Fuchsian signature & $\rm Aut^{\pm}(X)$ & NEC signature & Generating Vector \\ \hline
    $C_2$ & $(2^6)$ & $C_4$ & $(1;-;[2^3])$ & $(a;a^2,a^2,a^2)$ \\ \hline
    \end{tabular}
    \vspace{0.5cm}
    \caption{Automorphism groups of pseudoreal Riemann surfaces of genus 2}\label{GEN2}
\end{table}

\medskip

\begin{table}[!h]
\centering\renewcommand{\arraystretch}{1.3}\setlength{\tabcolsep}{2pt}

    \begin{tabular}{| c | c | c | c | p{3.5cm} |}
    \hline
    \multicolumn{5}{ |c| }{Genus $3$} \\
    \hline
    $\rm Aut^{+}(X)$ & Fuchsian signature & $\rm Aut^{\pm}(X)$ & NEC signature & Generating Vector \\ \hline
    $C_2$ & $(1;[2^4])$ & $C_4$ & $(2;-;[2^2])$ & $(a,a;a^2,a^2)$ \\ \hline
    $C_2\times C_2$ & $(0;[2^6])$ & $C_4\times C_2$ & $(1;-;[2^3])$ & $(a;b,b,a^2)$ \\ \hline
    \end{tabular}
    \vspace{0.5cm}
    \caption{Automorphism groups of pseudoreal Riemann surfaces of genus 3}
\label{GEN3}
\end{table}

\medskip

\begin{table}[!h]
\centering\renewcommand{\arraystretch}{1.3}\setlength{\tabcolsep}{2pt}
 
    \begin{tabular}{| c | c | c | c | p{3.5cm} |}
    \hline
    \multicolumn{5}{ |c| }{Genus $4$} \\
    \hline
    $\rm Aut^{+}(X)$ & Fuchsian signature & $\rm Aut^{\pm}(X)$ & NEC signature & Generating Vector \\ \hline
    $C_2$ & $(0;[2^{10}])$ & $C_4$ & $(1;-;[2^5])$ & $(a;a^2,a^2,a^2,a^2,a^2)$ \\ \hline
    $C_2$ & $(2;[2^2])$ & $C_4$ & $(3;-;[2])$ & $(a,a,a;a^2)$ \\ \hline
    $C_4$ & $(0;[2^4,4^2])$ & $C_8$ & $(1;-;[2^2,4])$ & $(a^3;a^4,a^4,a^2)$ \\ \hline
    $D_5$ & $(0;[2^2,5^2])$ & $F20$ & $(1;-;[2,5])$ & $(b;b^2a,a^4)$ \\ \hline
    \end{tabular}
     \vspace{0.5cm}
\caption{Automorphism groups of pseudoreal Riemann surfaces of genus 4}
\label{GEN4}
\end{table}
\newpage

\vspace{5cm} 
\ \ \

\begin{table}[!h]
\centering\renewcommand{\arraystretch}{1.3}\setlength{\tabcolsep}{2pt}

    \begin{tabular}{| c | c | c | c | p{3.5cm} |}
    \hline
    \multicolumn{5}{ |c| }{Genus $5$} \\
    \hline
    $\rm Aut^{+}(X)$ & Fuchsian signature & $\rm Aut^{\pm}(X)$ & NEC signature & Generating Vector \\ \hline
    $C_2$ & $(3;[-])$ & $C_4$ & $(4;-;[-])$ & $(a,a,a,a;[-])$ \\ \hline
    $C_2$ & $(1;[2^8])$ & $C_4$ & $(2;-;[2^4])$ & $(a,a;a^2,a^2,a^2,a^2)$ \\ \hline
    $C_4$ & $(1;[2^4])$ & $C_8$ & $(2;-;[2^2])$ & $(a,a^3;a^4,a^4)$ \\ \hline
    $C_4$ & $(1;[2^4])$ & $Q_8$ & $(2;-;[2^2])$ & $(j,k;-1,-1)$ \\ \hline    
    $C_4$ & $(0;[2^2,4^4])$ & $Q_8$ & $(1;-;[2,4^2])$ & $(j;-1,i,-i)$ \\ \hline
    $C_2\times C_2$ & $(0;[2^8])$ & $C_4\times C_2$ & $(1;-;[2^4])$ & $(a;b,b,b,a^2b)$ \\ \hline
    $C_2\times C_2$ & $(1;[2^4])$ & $C_4\times C_2$ & $(2;-;[2^2])$ & $(a,a;b,b)$ \\ \hline
    $C_6$ & $(1;[3^2])$ & $C_{12}$ & $(2;-;[3])$ & $(a,a;a^8)$ \\ \hline
    $D_4$ & $(0;[2^6])$ & $QD_8$ & $(1;-;[2^3])$ & $(xa;a^4,a^4,a^4)$ \\ \hline
    $C_2\times C_2\times C_2$ & $(0;[2^6])$ & $C_4\times C_2\times C_2$ & $(1;-;[2^3])$ & $(a;b,c,a^2bc)$ \\ \hline
     & $-$ & $C_4\times C_2\rtimes_{\phi} C_2$ & $(1;-;[2^3])$ & $(a;b,c,a^2bc)$ \\ \hline
    \end{tabular}
    \vspace{0.5cm}
    \caption{Automorphism groups of pseudoreal Riemann surfaces of genus 5}\label{table5}
    \end{table}
     

\begin{table}[!h]
\centering\renewcommand{\arraystretch}{1.3}\setlength{\tabcolsep}{2pt}

    \begin{tabular}{| c | c | c | c | p{4.5cm} |}
    \hline
    \multicolumn{5}{ |c| }{Genus $6$} \\
    \hline
    $\rm Aut^{+}(X)$ & Fuchsian signature & $\rm Aut^{\pm}(X)$ & NEC signature & Generating Vector \\ \hline
    $C_2$ & $(0;[2^{14}])$ & $C_4$ & $(1;-;[2^{7}])$ & $(a;a^2,a^2,a^2,a^2,a^2,a^2,a^2)$ \\ \hline
    $C_2$ & $(2;[2^6])$ & $C_4$ & $(3;-;[2^{3}])$ & $(a,a,a;a^2,a^2,a^2)$ \\ \hline
    $C_4$ & $(0;[2^6,4^2])$ & $C_8$ & $(1;-;[2^3,4])$ & $(a;a^4,a^4,a^4,a^2)$ \\ \hline
    $C_4$ & $(0;[4^6])$ & $C_8$ & $(1;-;[4^3])$ & $(a;a^2,a^2,a^2)$ \\ \hline
    $C_6$ & $(0;[2^4,6^2])$ & $C_{12}$ & $(1;-;[2^2,6])$ & $(a;a^6,a^6,a^{10})$ \\ \hline
    $C_6$ & $(0;[2^2,3^4])$ & $C_{12}$ & $(1;-;[2,3^2])$ & $(a^5;a^6,a^4,a^4)$ \\ \hline
     & $-$ & ${\rm Dic}_{12}$ & $(1;-;[2,3^2])$ & $(x;a^3,a^2,a^4)$ \\ \hline
     $D_5$ & $(0;[2^6])$ & $F20$ & $(1;-;[2^3])$ & $(b;ab^2,ab^2,b^2)$ \\ \hline
    \end{tabular}
      \vspace{0.5cm}
    \caption{Automorphism groups of pseudoreal Riemann surfaces of genus 6}\label{table6}
\end{table}

\newpage

\begin{table}[!h]
\centering\renewcommand{\arraystretch}{1.3}\setlength{\tabcolsep}{2pt}

    \begin{tabular}{| c | c | c | c | p{4.3cm} |}
    \hline
    \multicolumn{5}{ |c| }{Genus $7$} \\
    \hline
    $\rm Aut^{+}(X)$ & Fuchsian signature & $\rm Aut^{\pm}(X)$ & NEC signature & Generating Vector \\ \hline
    $C_2$ & $(1;[2^{12}])$ & $C_4$ & $(2;-;[2^6])$ & $(a,a;a^2,a^2,a^2,a^2,a^2,a^2)$ \\ \hline
    $C_2$ & $(3;[2^{4}])$ & $C_4$ & $(4;-;[2^2])$ & $(a,a,a,a;a^2,a^2)$ \\ \hline
     $C_4$ & $(0;[2^4,4^4])$ & $Q_8$ & $(1;-;[2^2,4^2])$ & $(j;-1,-1,i,i)$ \\ \hline
     $C_4$ & $(1;[2^6])$ & $C_8$ & $(2;-;[2^3])$ & $(a,a;a^4,a^4,a^4)$ \\ \hline
     $C_4$ & $(1;[4^4])$ & $C_8$ & $(2;-;[4^2])$ & $(a,a;a^2,a^2)$ \\ \hline
     & $-$ & $Q_8$ & $(2;-;[4^2])$ & $(j,j;i,-i)$ \\ \hline
     $C_4$ & $(2;[2^2])$ & $Q_8$ & $(3;-;[2])$ & $(j,j,j;-1)$ \\ \hline
     $C_2\times C_2$ & $(0;[2^{10}])$ & $C_4\times C_2$ & $(1;-;[2^5])$ & $(a;b,b,a^2,a^2,a^2)$ \\ \hline
     $C_2\times C_2$ & $(1;[2^6])$ & $C_4\times C_2$ & $(2;-;[2^3])$ & $(a,a;a^2,a^2b,b)$ \\ \hline
     $C_2\times C_2$ & $(2;[2^2])$ & $C_4\times C_2$& $(3;-;[2])$ & $(a,a,ab;a^2)$ \\ \hline
     $C_6$ & $(1;[2^4])$ & $C_{12}$& $(2;-;[2^2])$ & $(a^3,a^3;a^6,a^6)$ \\ \hline
     & $(1;[2^4])$ & ${\rm Dic}_{12}$& $(2;-;[2^2])$ & $(ax,x;a^3,a^3)$ \\ \hline
     $C_4\times C_2$ & $(0;[2^4,4^2])$ & $C_8\times C_2$ & $(1;-;[2^2,4])$ & $(a;b,b,a^6)$ \\ \hline
      & $-$ & $C_8\rtimes_{\phi} C_2$ & $(1;-;[2^2,4])$ & $(ax;a^4x,a^4x,a^2)$ \\ \hline
     $D_4$ & $(0;[2^4,4^2])$ & $QD_8$& $(1;-;[2^2,4])$ & $(a;x,x,a^6)$ \\ \hline
     $D_6$ & $(0;[2^6])$ & $C_3\times S_3$& $(1;-;[2^3])$ & $-$ \\ \hline
    \end{tabular}
       \vspace{0.5cm}
\caption{Automorphism groups of pseudoreal Riemann surfaces of genus 7}\label{table7}
\end{table}

\medskip

\begin{table}[!h]
\centering\renewcommand{\arraystretch}{1.3}\setlength{\tabcolsep}{2pt}

    \begin{tabular}{| c | c | c | c | p{5.7cm} |}
    \hline
    \multicolumn{5}{ |c| }{Genus $8$} \\
    \hline
    $\rm Aut^{+}(X)$ & Fuchsian signature & $\rm Aut^{\pm}(X)$ & NEC signature & Generating Vector \\ \hline
    $C_2$ & $(0;[2^{18}])$ & $C_4$ & $(1;-;[2^9])$ & $(a;a^2,a^2,a^2,a^2,a^2,a^2,a^2,a^2,a^2)$ \\ \hline
    $C_2$ & $(2;[2^{10}])$ & $C_4$ & $(3;-;[2^5])$ & $(a,a,a;a^2,a^2,a^2,a^2,a^2)$ \\ \hline
    $C_2$ & $(4;[2^2])$ & $C_4$ & $(5;-;[2])$ & $(a,a,a,a,a;a^2)$ \\ \hline
     $C_4$ & $(0;[2^8,4^2])$ & $C_8$ & $(1;-;[2^4,4])$ & $(a^3;a^4,a^4,a^4,a^4,a^2)$ \\ \hline
     $C_4$ & $(0;[2^2,4^6])$ & $C_8$ & $(1;-;[2,4^3])$ & $(a^3;a^4,a^2,a^2,a^2)$ \\ \hline
     $C_4$ & $(2;[4^2])$ & $C_8$ & $(3;-;[4])$ & $(a,a,a;a^2)$ \\ \hline
     $C_6$ & $(0;[3^4,6^2])$ & $C_{12}$ & $(1;-;[3^2,6])$ & $(a;a^4,a^4,a^2)$ \\ \hline
     & $-$ & ${\rm Dic}_{12}$ & $(1;-;[3^2,6])$ & $(x;a^2,a^2,a^5)$ \\ \hline
     $C_6$ & $(0;[2^6,3^2])$ & $C_{12}$ & $(1;-;[2^3,3])$ & $(a;a^6,a^6,a^6,a^4)$ \\ \hline
     $C_6$ & $(0;[2^2,6^4])$ & $C_{12}$ & $(1;-;[2,6^2])$ & $(a;a^6,a^2,a^2)$ \\ \hline
     & $-$ & ${\rm Dic}_{12}$ & $(1;-;[2,6^2])$ & $(x;a^3,a,a^5)$ \\ \hline
     $C_8$ & $(0;[2^4,8^2])$ & $C_{16}$ & $(1;-;[2^2,8])$ & $(a;a^8,a^8,a^{14})$ \\ \hline
    \end{tabular}
    \vspace{0.5cm}
\caption{Automorphism groups of pseudoreal Riemann surfaces of genus 8}\label{table8}
\end{table}

\newpage
\vspace{5cm}
\ \\
\begin{table}[!h]
\centering\renewcommand{\arraystretch}{1.3}\setlength{\tabcolsep}{2pt}

    \begin{tabular}{| c | c | c | c |}
    \hline
    \multicolumn{4}{ |c| }{Genus $9$} \\
    \hline
    $\rm Aut^{+}(X)$ & Fuchsian signature & $\rm Aut^{\pm}(X)$ & NEC signature \\ \hline
    $C_2$ & $(1;[2^{16}])$ & $C_4$ & $(2;-;[2^8])$ \\ \hline
    $C_2$ & $(3;[2^{8}])$ & $C_4$ & $(4;-;[2^4])$ \\ \hline
    $C_2$ & $(5;[-])$ & $C_4$ & $(6;-;[-])$\\ \hline
     $C_4$ & $(0;[4^8])$ & $Q_8$ & $(1;-;[4^4])$ \\ \hline
     $C_4$ & $(0;[2^6,4^4])$ & $Q_8$ & $(1;-;[2^3,4^2])$ \\ \hline
     $C_4$ & $(1;[2^8])$ & $C_8$ & $(2;-;[2^4])$ \\ \hline
      & $-$ & $Q_8$ & $(2;-;[2^4])$ \\ \hline
     $C_4$ & $(1;[2^2,4^4])$ & $C_8$ & $(2;-;[2,4^2])$ \\ \hline
      & $-$ & $Q_8$ & $(2;-;[2,4^2])$ \\ \hline    
      $C_4$ & $(3;[-])$ & $C_8$ & $(4;-;[-])$ \\ \hline 
      & $-$ & $Q_8$ & $(4;-;[-])$ \\ \hline 
      $C_2\times C_2$ & $(0;[2^{12}])$ & $C_4\times C_2$ & $(1;-;[2^6])$ \\ \hline       
      $C_2\times C_2$ & $(1;[2^8])$ & $C_4\times C_2$ & $(2;-;[2^4])$ \\ \hline 
      $C_2\times C_2$ & $(2;[2^4])$ & $C_4\times C_2$ & $(3;-;[2^2])$ \\ \hline 
      $C_2\times C_2$ & $(3;[-])$ & $C_4\times C_2$ & $(4;-;[-])$ \\ \hline 
      $C_6$ & $(1;[2^2,6^2])$ & $C_{12}$ & $(1;-;[2^6])$ \\ \hline 
      $C_6$ & $(1;[3^4])$ & ${\rm Dic}_{12}$ & $(2;-;[3^2])$ \\ \hline 
       & $-$ & $C_{12}$ & $(2;-;[3^2])$ \\ \hline 
      $D_4$ & $(0;[2^8])$ & $QD_8$ & $(1;-;[2^4])$ \\ \hline
      $C_2^3$ & $(0;[2^8])$ & ${\rm ID}(16,3)$ & $(1;-;[2^4])$ \\ \hline
       & $-$ & $C_4\times C_2^2$ & $(1;-;[2^4])$ \\ \hline
      $C_4\times C_2$ & $(0;[2^2,4^4])$ & $C_4\times C_4$ & $(1;-;[2,4^2])$ \\ \hline
       & $-$ & ${\rm ID}(16,4)$ & $(1;-;[2,4^2])$ \\ \hline
        & $-$ & $C_2\times Q_8$ & $(1;-;[2,4^2])$ \\ \hline
        $Q_8$ & $(0;[2^2,4^4])$ & $Q_{16}$ & $(1;-;[2,4^2])$ \\ \hline
    $C_8$ & $(1;[2^4])$ & $C_{16}$ & $(2;-;[2^2])$ \\ \hline    
    & $-$ & $Q_{16}$ & $(2;-;[2^2])$ \\ \hline  
        \end{tabular}
        \vspace{0.5cm}
\caption{Automorphism groups of pseudoreal Riemann surfaces of genus 9}\label{table9}
\end{table}

\newpage
\vspace{5cm}
\ \\

\begin{table}[!h]
\centering\renewcommand{\arraystretch}{1.3}\setlength{\tabcolsep}{2pt}
\label{table9pt2}
\medskip

    \begin{tabular}{| c | c | c | c |}
    \hline
    \multicolumn{4}{ |c| }{Genus $9$ (continuation)} \\
    \hline
    $\rm Aut^{+}(X)$ & Fuchsian signature & $\rm Aut^{\pm}(X)$ & NEC signature \\ \hline  
$C_4\times C_2$ & $(1;[2^4])$ & $C_4\times C_4$ & $(2;-;[2^2])$ \\ \hline       
 & $-$ & ${\rm ID}(16,4)$ & $(2;-;[2^2])$ \\ \hline 
  & $-$ & $C_8\times C_2$ & $(2;-;[2^2])$ \\ \hline 
   & $-$ & ${\rm ID}(16,6)$ & $(2;-;[2^2])$ \\ \hline 
    & $-$ & $C_2\times Q_8$ & $(2;-;[2^2])$ \\ \hline    
 $D_4$ & $(1;[2^4])$ & $QD_8$ & $(2;-;[2^2])$ \\ \hline       
 $C_2^3$ & $(1;[2^4])$ & ${\rm ID}(16,3)$ & $(2;-;[2^2])$ \\ \hline       
 & $-$ & $C_4\times C_2^2$ & $(2;-;[2^2])$ \\ \hline 
$C_4\times C_2$ & $(2;[-])$ & ${\rm ID}(16,6)$ & $(3;-;[-])$ \\ \hline     
$D_4$ & $(2;[-])$ & $QD_8$ & $(3;-;[-])$ \\ \hline          
$D_5$ & $(1;[5^2])$ & $F20$ & $(2;-;[5])$ \\ \hline        
$C_{10}$ & $(1;[5^2])$ & $C_{20}$ & $(2;-;[5])$ \\ \hline 
$D_6$ & $(0;[2^4,3^2])$ & $C_4\times S_3$ & $(2;-;[2^2,3])$ \\ \hline 
$C_6\times C_2$ & $(0;[2^4,3^2])$ & $C_{12}\times C_2$ & $(2;-;[2^2,3])$ \\  \hline 
$C_{12}$ & $(1;[3^2])$ & $C_{24}$ & $(2;-;[3])$ \\  \hline 
& $-$ & $C_3\times Q_8$ & $(2;-;[3])$ \\ \hline
$C_6\times C_2$ & $(1;[3^2])$ & $C_{12}\times C_2$ & $(2;-;[3])$ \\  \hline 
$D_8$ & $(0;[2^6])$ & ${\rm ID}(32,19)$ & $(1;-;[2^3])$ \\  \hline 
$C_2\times D_4$ & $(0;[2^6])$ & ${\rm ID}(32,6)$ & $(1;-;[2^3])$ \\  \hline 
& $-$ & ${\rm ID}(32,7)$ & $(1;-;[2^3])$ \\ \hline 
& $-$ & ${\rm ID}(32,9)$ & $(1;-;[2^3])$ \\ \hline 
& $-$ & $C_2\times QD_8$ & $(1;-;[2^3])$ \\ \hline 
${\rm ID}(16,13)$ & $(0;[2^6])$ & ${\rm ID}(32,11)$ & $(1;-;[2^3])$ \\  \hline
 & $-$ & ${\rm ID}(32,38)$ & $(1;-;[2^3])$ \\  \hline
$C_2^4$ & $(0;[2^6])$ & ${\rm ID}(32,22)$ & $(1;-;[2^3])$ \\  \hline
${\rm ID}(16,6)$ & $(1;[2^2])$ & ${\rm ID}(32,15)$ & $(2;-;[2])$ \\  \hline
$D_{10}$ & $(0;[2^2,10^2])$ & ${\rm ID}(40,12)$ & $(1;-;[2,10])$ \\  \hline

  \end{tabular}
\end{table}

\newpage

\begin{table}[!h]
\centering\renewcommand{\arraystretch}{1.3}\setlength{\tabcolsep}{2pt}
    \begin{tabular}{| c | c | c | c |}
    \hline
    \multicolumn{4}{ |c| }{Genus $10$} \\
    \hline
    $\rm Aut^{+}(X)$ & Fuchsian signature & $\rm Aut^{\pm}(X)$ & NEC signature \\ \hline
    $C_2$ & $(0;[2^{22}])$ & $C_4$ & $(1;-;[2^{11}])$ \\ \hline
    $C_2$ & $(2;[2^{14}])$ & $C_4$ & $(3;-;[2^{7}])$ \\ \hline
   $C_2$ & $(4;[2^{6}])$ & $C_4$ & $(5;-;[2^{3}])$ \\ \hline 
$C_4$ & $(0;[2^{10},4^2])$ & $C_8$ & $(1;-;[2^{5},4])$ \\ \hline    
$C_4$ & $(0;[2^{4},4^6])$ & $C_8$ & $(1;-;[2^{2},4^3])$ \\ \hline    
$C_4$ & $(2;[2^{2},4^2])$ & $C_8$ & $(3;-;[2,4])$ \\ \hline    
$C_6$ & $(0;[2^4,3^2,6^2])$ & ${\rm Dic}_{12}$ & $(1;-;[2^2,3,6])$ \\ \hline     
 & $-$ & $C_{12}$ & $(1;-;[2^2,3,6])$ \\ \hline   
$C_6$ & $(0;[2^2,3^6])$ & ${\rm Dic}_{12}$ & $(1;-;[2,3^3])$ \\ \hline     
 & $-$ & $C_{12}$ & $(1;-;[2,3^3])$ \\ \hline      
$C_6$ & $(0;[6^6])$ & ${\rm Dic}_{12}$ & $(1;-;[6^3])$ \\ \hline     
 & $-$ & $C_{12}$ & $(1;-;[6^3])$ \\ \hline     
$C_6$ & $(2;[2^2])$ & ${\rm Dic}_{12}$ & $(3;-;[2])$ \\ \hline     
 & $-$ & $C_{12}$ & $(3;-;[2])$ \\ \hline     
$C_8$ & $(0;[2^2,4^2,8^2])$ & $C_{16}$ & $(1;-;[2,4,8])$ \\ \hline  
$C_{10}$ & $(0;[2^4,10^2])$ & $C_{20}$ & $(1;-;[2^2,10])$ \\ \hline 
${\rm ID}(18,4)$ & $(0;[2^6])$ & ${\rm ID}(36,9)$ & $(1;-;[2^3])$ \\ \hline  
${\rm ID}(36,9)$ & $(0;[2^2,4^2])$ & ${\rm ID}(72,39)$ & $(1;-;[2,4])$ \\ \hline   
    \end{tabular}
    \vspace{0.5cm}
\caption{Automorphism groups of pseudoreal Riemann surfaces of genus 10}\label{table10}
\end{table}

\end{biblist}
\end{bibdiv}

\end{document}